\newcommand{\subsectionruninhead}{\@startsection{subsection}{2}{0mm}
{-\baselineskip}{-0mm}{\bf\large}}
\newcommand{\subsubsectionruninhead}{\@startsection{subsubsection}{3}{0mm}
{-\baselineskip}{-0mm}{\bf\normalsize}}
\newtheorem*{theorem*}{Theorem}
\newtheorem{theoremalph}{Theorem}
\newtheorem*{proposition*}{Proposition}
\newtheorem*{corollary*}{Corollary}
\newtheorem*{claim*}{Claim}
\newtheorem*{remark*}{Remark}
\newtheorem*{problem*}{Problem}
\newtheorem{theorem}{Theorem}[section]
\newtheorem{proposition}[theorem]{Proposition}
\newtheorem{corollary}[theorem]{Corollary}
\newtheorem{lemma}[theorem]{Lemma}
\newtheorem{claim}[theorem]{Claim}
\theoremstyle{definition}
\newtheorem{definition}[theorem]{Definition}
\newtheorem{remark}[theorem]{Remark}
\numberwithin{equation}{section}
 \def\NN{{\mathbb N}}  
 \def\RR{{\mathbb R}}
   \def\cO{\mathcal{O}} \def\cU{\mathcal{U}}
   \def\cP{\mathcal{P}} 
\def\cE{\mathcal{E}}  \def\cK{\mathcal{K}}
\newcommand{\supp}{\operatorname{\,Supp}}
\newcommand{\orb}{\operatorname{Orb}}
\newcommand{\diam}{\operatorname{Diam}}
\newcommand{\sing}{\operatorname{Sing}}
\newcommand{\eps}{\varepsilon}
\newcommand{\R}{\mathbb{R}}
\newcommand{\htop}{h_{\text{top}}}
\newcommand{\F}{\mathfrak{F}}
\begin{document}

\title{On multifractal analysis and large deviations \\ of singular hyperbolic attractors}

\author{Yi Shi\footnote{Y. Shi was partially supported by National Key R\&D Program of China (2021YFA1001900) and NSFC (12071007, 11831001, 12090015).}\, ,\,  
	Xueting Tian\footnote{X. Tian was partially supported by NSFC (12071082) and Science and Technology Innovation Action Program of Science and Technology Commission of Shanghai Municipality (STCSM, No. 21JC1400700).}  \,   
	Paulo Varandas\footnote{P. Varandas was partially supported by the grant CEECIND/03721/2017 of the Stimulus of Scientific Employment, Individual Support 2017 Call, awarded by FCT-Portugal, and by 
CMUP, which is financed by national funds through FCT-Portugal, under the project UIDB/00144/2020.} \, and  
Xiaodong Wang\footnote{X. Wang was partially supported by National Key R\&D Program of China (2021YFA1001900), NSFC (12071285, 11701366),  Science and Technology Innovation Action Program of Science and Technology Commission of Shanghai Municipality (STCSM, No. 20JC1413200) and Innovation Program of Shanghai Municipal Education Commission (No. 2021-01-07-00-02-E00087).}}

\maketitle

\begin{abstract}
	 In this paper we study the multifractal analysis and large derivations for singular hyperbolic attractors, including the geometric Lorenz attractors. 
	 For each singular hyperbolic homoclinic class whose periodic orbits are all homoclinically related and such that the space of ergodic probability measures 
	 is connected, we prove that: (i) level sets associated to continuous observables are dense in the homoclinic class and satisfy a variational principle; (ii) 
	 irregular sets are either empty or are Baire generic and carry full topological entropy.
	 The assumptions are satisfied by $C^1$-generic singular hyperbolic attractors and $C^r$-generic geometric Lorenz attractors $(r\ge 2)$. 
	Finally we prove level-2 large deviations bounds for weak Gibbs measures, which provide a large deviations principle in the special
	  case of Gibbs measures.
	 The main technique we apply is the horseshoe approximation property. 
\end{abstract}


\section{Introduction}

\qquad Ergodic theorems appear as cornerstones in ergodic theory and dynamical systems, as they allow to describe long time behavior of points in full measure sets 
with respect to invariant probability measures. Given this starting point, a particularly important topic of interest is to characterize level sets, the velocity of convergence to time averages and the set of points for which time averages do not exist, often called irregular points.  In this paper we will be interested
in the multifractal analysis and large deviations  for flows with singularities, whose concepts we will recall.

\vskip2mm
Let $\mathscr{X}^r(M)$, $r\ge 1$, denote the space of $C^r$-vector fields on a closed smooth Riemannian manifold $M$ endowed with the $C^r$-topology.
Given a vector field $X\in\mathcal{X}^1(M)$ and a compact invariant subset $\Lambda$ of  the $C^1$-flow  $(\phi_t)_{t\in\mathbb{R}}$ generated by $X$, we denote by $C(\Lambda,\mathbb{R})$ the space of continuous functions on $\Lambda$.
For any $g\in C(\Lambda,\mathbb{R})$, Birkhoff's ergodic theorem ensures that for any $\mu\in \mathcal{M}_{inv}(\Lambda)$, the time average 
$
\displaystyle\lim_{T\rightarrow\infty}\frac{1}{T}\int_{0}^{T}g(\phi_t(x)) {\rm d}t 
$ 
exists for $\mu$-almost all point $x\in \Lambda$.
Defining, for each $a\in \mathbb{R}$, the \emph{$g$-level set}
$$
R_{g}(a)\colon=\left\{x\in\Lambda \colon \lim_{T\rightarrow\infty}\frac{1}{T}\int_{0}^{T}g(\phi_t(x)) {\rm d}t =a\right\},
$$ 
and the \emph{$g$-irregular set} by
$$
I_{g}\colon=\left\{x\in\Lambda \colon \lim_{T\rightarrow\infty}\frac{1}{T}\int_{0}^{T}g(\phi_t(x)) \,{\rm d}t \text{ does not exist}\right\},
$$ 
one obtains the  \textit{multifractal decomposition} 
$$\Lambda= I_{g} \;\cup\; \bigcup_{a\in\mathbb R} R_{g}(a)$$ 
of the flow with respect to the observable $g$.
The properties of the entropy, dimension or genericity of level sets and irregular sets have been much studied in the recent years.
For uniformly hyperbolic systems, both diffeomorphisms and vector fields, a rigorous mathematical theory of multifractal analysis is available (see ~\cite{BS00b,BPS97,BS00a,DK,Olsen,OlsenWinter,TV99} and references therein). In rough terms, for uniformly hyperbolic dynamical systems 
each level set carries all ergodic information and their topological entropy satisfies a variational principle using the invariant measures supported on it, while
the irregular set carries full topological entropy and, under some conformality assumption, it has full Hausdorff dimension.
The multifractal analysis of non-uniformly hyperbolic systems had a few contributions
(see e.g. 	\cite{BI06,Cli,PT10,TV17}) but the theory still remains quite incomplete, especially in the time-continuous setting.  

\vskip2mm

A second and related topic of interest concerns the theory of large deviations which, in the dynamical systems framework, 
addresses on the rates of convergence in the ergodic theorems. This gives a finer description of the behavior inside the level sets
of the multifractal decomposition described above.  More precisely, given a reference probability measure $\nu$  on
$\Lambda$ (possibly non-invariant) one would like to provide
sharp estimates for the $\nu$-measure the deviation sets
$$
\left
\{
    x \in \Lambda : \frac{1}{T}\int_{0}^{T}g(\phi_t(x)) {\rm d}t  > c
\right\}
$$
for all $g \in C(\Lambda, \mathbb R)$ and all real numbers $c$. 
Large deviations in dynamical systems are often used to measure the velocity of convergence to a certain probability. 
For uniformly hyperbolic and certain non-uniformly hyperbolic systems, both diffeomorphisms and vector fields, large deviation have been well-studied
(see e.g. \cite{Ar07,BV19, CYZ, Ki90,You90} and references therein). From the technical viewpoint, in most situations 
large deviations principles often rely on the following mechanisms: (i) differentiability of the pressure function;
(ii) some gluing orbit property and weak Gibbs estimates; (iii) existence of Young towers with exponential tails modeling the dynamical system; 
or (iv) entropy-denseness on the approximation by horseshoes.
We refer the reader to \cite{CTY,MN,PS05,Va12} for more details on each of these approaches.

\vskip2mm

In this paper we are interested in the multifractal analysis and large deviations of vector fields with singularities, 
including geometric Lorenz attractors (cf. Definition~\ref{Def:lorenz}).
The Lorenz attractor was observed by E. Lorenz~\cite{Lorenz} in 1963, whose dynamics sensitively depend on initial conditions. Later, J. Guckenheimer~\cite{guck} and V. Afra$\breve {\rm \i}$movi$\check{\rm c}$-V. Bykov-L. Sil'nikov~\cite{abs} introduced a geometric model for the 
Lorenz attractor, nowadays known as geometric Lorenz attractors.  It is known that the space of $C^r$ vector fields ($r\in\mathbb{N}_{\geq 2}$) 
exhibiting a geometric Lorenz attractor is an open subset in $\mathscr{X}^r(M^3)$ (cf. \cite{STW}).
In the study of $C^1$ robustly transitive flows, Morales, Pac\'ifico and Pujals ~\cite{mpp} introduced the concept of singular hyperbolic flows
(see Definition \ref{Def:singular-hyp}), which include the geometric Lorenz attractors as a special class of examples. 
The coexistence of singular and regular behavior is known to present difficulties to both the geometric theory and ergodic theory of flows, and to
present new and rich phenomena in comparison to uniformly hyperbolic flows. 
In order to illustrate this, let us mention that for each $r\ge 2$, there exist a Baire residual subset $\mathcal{R}^r$ and a dense subset $\mathcal{D}^r$ in $\mathscr{X}^r(M)$ such that for a geometric Lorenz attractor $\Lambda$ of  $X\in\mathcal{R}^r$, the space of ergodic probability measures supported on $\Lambda$ is connected, while for a geometric Lorenz attractor $\Lambda'$ of  $X\in\mathcal{D}^r$, the space of ergodic probability measures supported on $\Lambda'$  is not connected; 
and a similar statement holds for $C^1$-singular hyperbolic attractors in higher dimension \cite[Theorem A\&B]{STW}. 
In rough terms, the underlying idea is that while the set
$$
\mathcal{M}_1(\Lambda)=\Big\{\mu\in \mathcal{M}_{inv}(\Lambda)\colon \mu(\sing(\Lambda))=0 \Big\}
$$
of probability measures which give zero weight to the singularity set $\sing(\Lambda)$ of a singular hyperbolic attractor $\Lambda$ 
is formed by hyperbolic measures, which inherit a good approximation by periodic orbits, Dirac measures at singularities can be either approximated or not (in the weak$^*$ topology) by periodic measures 
depending on the recurrence of the singular set to itself, measured in terms of proximity to 
vector fields displaying homoclinic loops (see \cite{STW} for the precise statements).

\vskip2mm

Here we use the horseshoe approximation technique to study the multifractal ana\-lysis and large deviations for singular hyperbolic attractors, including geometric Lorenz attractors. 
The classical approach to describe level sets $R_{g}(a)$ involves the uniqueness of equilibrium states for H\"older continuous observables $g$. 
However this is still an open problem for singular hyperbolic attractors. 
On the other hand, if the observable $g$ is merely continuous one knows that uniqueness of equilibrium states fails even for subshifts of finite type
\cite{Hof}.
%
Another obstruction appears when one considers the irregular set $I_{g}$. Indeed, while
most constructions of fractal sets with high entropy involve the use of some specification-like property,
the presence of hyperbolic singularities constitutes an obstruction for specification (see e.g. \cite{SVY15,Wen-Wen20}).
In this direction, a standard argument is to establish the variational principle for saturated sets of generic points. However, up to now it is still unknown whether an invariant  non-ergodic probability measure (for example, convex sum of infinite periodic measures not supported on a same horseshoe) has generic points. 
Similar obstructions occur in the study of large deviations. The drawback of looking 
for the differentiability of the pressure function is that, even in the hyperbolic context, it demands one to consider the space of H\"older continuous observables. The latter relies ultimately on the uniqueness of equilibrium states for H\"older continuous
potentials, a question which in such generality remains widely open. 
Moreover, singular hyperbolic attractors seem not to display any gluing orbit property, as hinted by \cite{BoTV20, SVY15,Wen-Wen20}.
We first show that the horseshoe approximation technique, valid for $C^r$-generic geometric Lorenz attractors and $C^1$-generic singular hyperbolic attractors, 
is enough to show that the level sets and irregular sets of such singular hyperbolic attractors inherit the properties of the corresponding objects for special
classes of horseshoes approximating them. 
This property plays a crucial role in the proof of level-2 large deviations lower bounds for weak Gibbs measures, ie, lower bounds on the measure of points
whose empirical measures belong to some weak$^*$ open set of probability measures on the attractor $\Lambda$. 
On the other hand, level-2 large deviations upper bounds for weak Gibbs measures follow a more standard approach, exploring ideas from the proof of the variational principle. However, as one requires a very mild Gibbs property, the large deviations rate function takes into account certain tails of constants
often associated to the loss of uniform hyperbolicity. We refer the reader to \cite{Va12} for a discussion on the relation between the weak Gibbs property  
and the tail of hyperbolic times 
of local diffeomorphisms, and to Theorem~\ref{Thm:LD2B} for the precise statements.

\subsection*{Organization of the paper}
In Section~\ref{Section:Pre}, we present some  concepts and  known results. 
The main theorems (Theorem A,B,C) of this paper are presented in Section~\ref{Section:statements}.
In Section~\ref{Section:ED-HS-approximation}, we introduce the notions of entropy denseness and horseshoe approximation properties, and provide sufficient conditions for these properties to be verified.
Section~\ref{Section:proof-MA} is devoted to the multifractal analysis of Lorenz attractors/singular hyperbolic attractors and to the proof of Theorems ~\ref{Thm:irregular-Lorenz} and ~\ref{Thm:irregular-SinHyp-attractor}. 
Finally, in Section~\ref{Section:proof-LD} we provide large deviation estimates for Lorenz attractors/singular hyperbolic attractors and prove Theorem~\ref{Thm:LD2B-Lorenz-SH-attractor}.

\section{Preliminaries}\label{Section:Pre}
Before the statements of the main theorems, one would like to give some preliminaries in this section.
Let $\mathscr{X}^r(M)$, $r\ge 1$, denote the space of $C^r$-vector fields on a closed smooth Riemannian manifold $M$. For $X\in\mathscr{X}^r(M)$, denote by $\phi_t^X$ or $\phi_t$ for simplicity the $C^r$-flow generated by $X$ and denote by ${\rm D}\phi_t$ the tangent map of $\phi_t$. Moreover, given any $\phi_t$-invariant set $\Lambda$ we denote by $\sing(\Lambda)$ the set of singularities for the vector field $X$  in $\Lambda$.  
Let $\mathcal M(\Lambda)$ be the space of all probability measures supported on $\Lambda$ endowed with the weak*-topology. Let $d^*$ be a metric on the space $\mathcal{M}(\Lambda)$ compatible with the weak$^*$ topology which can be defined as follows, see for instance~\cite[Section 6.1]{Wa}:
take (and fix) a countable dense subset $\{\varphi_i\}_{i=1}^{\infty}$ of $C(\Lambda,\mathbb{R})$ where  $\varphi_i$ is not the zero function for every $i\geq 1$,
and for any  $\mu,\nu\in \mathcal{M}(\Lambda)$ set
$$\displaystyle d^*(\mu,\nu)=\sum_{i=1}^{\infty}\frac{|\int \varphi_i {\rm d}\mu-\int \varphi_i {\rm d}\nu|}{2^i\|\varphi_i\|},$$
where $\|\cdot\|$ denotes the $C^\infty$-norm on $C(\Lambda,\mathbb{R})$.
The set of invariant (resp. ergodic) probability measures of $X$ supported on $\Lambda$ is denoted by $\mathcal{M}_{inv}(\Lambda)$ (resp. $\mathcal{M}_{erg}(\Lambda)$). 
We denote by $h_\mu(X)$ the metric entropy of the invariant probability measure $\mu\in \mathcal{M}_{inv}(\Lambda)$, defined as the metric entropy of $\mu$ with respect to the time-1 map $\phi^X_1$ of the flow. 

\subsection{Geometric Lorenz attractor and singular hyperbolicity}\label{subsec:lorenz}

We recall the definitions of hyperbolic and singular hyperbolic sets.

\begin{definition}\label{Def:hyp}
	Given a vector field $X\in\mathscr{X}^1(M)$, a compact $\phi_t$-invariant set $\Lambda$ is {\it hyperbolic} if $\Lambda$ admits a continuous ${\rm D}\phi_t$-invariant splitting $T_{\Lambda}M=E^s\oplus \langle X\rangle\oplus E^u$, where $\langle X\rangle$ denotes the one-dimensional linear space generated by the vector field, and $E^s$ (resp. $E^u$) is uniformly contracted (resp. expanded) by ${\rm D}\phi_t$, that is to say, there exist constants $C>0$ and $\eta>0$ such that for any $x\in\Lambda$ and any $t\geq 0$,
	\begin{itemize}
		\item  $\|{\rm D}\phi_t(v)\|\leq Ce^{-\eta t}\|v\|$, for any $v\in E^s(x)$; and
		\item  $\|{\rm D}\phi_{-t}(v)\|\leq Ce^{-\eta t}\|v\|$, for any $v\in E^u(x)$.
	\end{itemize}
	for any $x\in\Lambda$ and $t\geq 0$.
	If $\Lambda$ is transitive, ie it admits a dense orbit, then the dimension $\dim(E^s)$ of the stable subbundle is constant 
	and is called the {\it index} of the hyperbolic splitting.
\end{definition}

The concept of singular hyperbolicity was introduced by Morales-Pacifico-Pujals~\cite{mpp} to describe the geometric structure of Lorenz attractors and these ideas were extended to higher dimensional cases in ~\cite{lgw,Me-Mo}. More explorations can be found in for instance~\cite{Sataev} which establishes SRB measures for singular hyperbolic attractors.  Let us recall this notion.

\begin{definition}\label{Def:singular-hyp}
	Given  a vector field $X\in\mathscr{X}^1(M)$, a compact and invariant set $\Lambda$ is {\it singular hyperbolic} if it admits a continuous ${\rm D}\phi_t$-invariant splitting $T_{\Lambda}M=E^{ss}\oplus E^{cu}$ and constants $C,\eta>0$ such that, 
	for any $x\in\Lambda$ and any $t\geq 0$, 
	\begin{itemize}
		\item $E^{ss}\oplus E^{cu}$ is a {\it dominated splitting} : $\|{\rm D}\phi_t|_{E^{ss}(x)}\|\cdot \|{\rm D}\phi_{-t}|_{E^{cu}(\phi_t(x))}\|< Ce^{-\eta t}$, and
		\item $E^{ss}$ is uniformly contracted by ${\rm D}\phi_t$ : $\|{\rm D}\phi_t(v)\|< Ce^{-\eta t}\|v\|$ for any $v\in E^{ss}(x)\setminus\{0\}$;
		\item $E^{cu}$ is {\it sectionally expanded} by ${\rm D}\phi_t$ :  $|\det{\rm D}\phi_t|_{V_x}| > Ce^{\eta t}$ for any 2-dimensional  subspace $V_x\subset E^{cu}_x$.
	\end{itemize}
\end{definition}

\begin{remark}\label{Rem:singular hyperbolicity}
	The following properties hold:
	
	\vspace{-.3cm}
	\begin{enumerate}
		\item Given $X\in\mathscr{X}^1(M)$, it follows from the definition of singular-hyperbolicity that 
		all hyperbolic periodic orbits of a  singular-hyperbolic set $\Lambda$ have the same index. 
		\vspace{-.15cm}
		\item Given $X\in\mathscr{X}^1(M)$ and an  invariant compact set $\Lambda$ which contains regular points (i.e. points that are not singularities),
		if  $\Lambda$ is hyperbolic, then  it must contain no singularities. On the other hand, if $\sing(\Lambda)=\emptyset$, then $\Lambda$ is hyperbolic if and only if $\Lambda$ is singular hyperbolic for $X$ or for $-X$ (cf. ~\cite{mpp}).
		\vspace{-.15cm}
		\item Singular hyperbolicity is a $C^1$-robust property. More precisely, if $\Lambda$ is a singular hyperbolic invariant compact set of $X\in\mathscr{X}^1(M)$ associated with splitting $T_{\Lambda}M=E^{ss}\oplus E^{cu}$ and constants $(C,\eta)$, then there exists an open neighborhood $U$ of $\Lambda$ and a neighborhood $\mathcal{U}\subset \mathscr{X}^1(M)$ of $X$ such that for any $Y\in\mathcal{U}$, the maximal invariant set of $\phi_t^Y$ in $U$ is a singular hyperbolic set for $Y$ with the same stable dimension and constants $(C,\eta)$ (cf.~\cite[Proposition 1]{MPP-SH-robust}).
	\end{enumerate}
\end{remark}

We give the definition of homoclinic class of a hyperbolic periodic orbit and homoclinically related property between hyperbolic periodic orbits. First recall that for a hyperbolic periodic point $p$ of $X$, the stable/unstable manifolds associated to its orbit $\orb(p)$ is defined as follows:
$$W^s(\orb(p))=\left\{x\colon \lim_{t\rightarrow +\infty}d\big(\phi_t(x),\orb(p)\big)= 0  \right\},$$
$$W^u(\orb(p))=\left\{x\colon \lim_{t\rightarrow +\infty}d\big(\phi_{-t}(x),\orb(p)\big)= 0  \right\}.$$
The classical theory on invariant manifolds by~\cite{hps} ensures that $W^s(\orb(p))$ and $W^u(\orb(p))$ are submanifolds of $M$.

\begin{definition}\label{Def:homoclinic-class}
	Let $X\in\mathscr{X}^1(M)$ and assume $p,q$ are two hyperbolic periodic points of $X$.
	The {\it homoclinic class} of $p$ is defined as
	$$H(p)=\overline{W^{s}(\orb(p)) \pitchfork W^{u}(\orb(p))},$$ 
	that is, the closure of the set of transversal intersections between the stable and  unstable manifolds of the periodic orbit $\orb(p)$ of $p$.  
	The two hyperbolic periodic orbits $\orb(p)$ and $\orb(q)$ are {\it homoclinically related} if the stable manifold  of $\orb(p)$ has non-empty transverse intersections with the unstable manifold of $\orb(q)$ and vice versa. 
	A homoclinic class is  called \emph{non-trivial} if it is not reduced to a single hyperbolic periodic orbit.
\end{definition}

Finally, we give the definition of geometric Lorenz attractors following Guckenheimer and Williams~\cite{guck,gw,williams} for vector fields on a closed 3-manifold $M^3$. 
Recall that for a hyperbolic singularity $\sigma$ of a vector field $X\in\mathscr{X}^1(M)$, its local  stable manifold of size $\delta$ is 
$$W_{\delta}^s(\sigma)=\left\{x\colon \lim_{t\rightarrow+\infty} \phi_t(x)=\sigma \text{~and~} d(\phi_t(x),\sigma)<\delta,\forall t\geq 0  \right\}.$$
When we do not specify the size $\delta$ of the local stable manifold, we just write as $W^s_{loc}(\sigma)$.

\begin{definition}\label{Def:lorenz}
	We say $X\in\mathscr{X}^r(M^3)$ ($r\geq1$) exhibits a \emph{geometric Lorenz attractor} $\Lambda$, if $X$ has an attracting region $U\subset M^3$ such that  $\Lambda=\bigcap_{t>0}\phi^X_t(U)$ is a singular hyperbolic attractor and satisfies:
	\begin{itemize}
		\item There exists a unique singularity $\sigma\in\Lambda$ with three exponents $\lambda_1<\lambda_2<0<\lambda_3$, which satisfy $\lambda_1+\lambda_3<0$ and $\lambda_2+\lambda_3>0$.  
		\item $\Lambda$ admits a $C^r$-smooth cross section $\Sigma$ which is $C^1$-diffeomorphic to $[-1,1]\times[-1,1]$, such that $l=\{0\}\times[-1,1]=W^s_{\it loc}(\sigma)\cap\Sigma$, and
		for every $z\in U\setminus W^s_{\it loc}(\sigma)$, there exists $t>0$ such that $\phi_t^X(z)\in\Sigma$.
		\item Up to the previous identification, the Poincar\'e map $P:\Sigma\setminus l\rightarrow\Sigma$ is a skew-product map
		$$
		P(x,y)=\big( f(x)~,~H(x,y) \big), \qquad \forall(x,y)\in[-1,1]^2\setminus l.
		$$
		Moreover, it satisfies
		\begin{itemize}
			\item $H(x,y)<0$ for $x>0$, and $H(x,y)>0$ for $x<0$; \color{black} 
			\item $\sup_{(x,y)\in\Sigma\setminus l}\big|\partial H(x,y)/\partial y\big|<1$, 
			and
			$\sup_{(x,y)\in\Sigma\setminus l}\big|\partial H(x,y)/\partial x\big|<1$; \color{black} 
			\item the one-dimensional quotient map $f:[-1,1]\setminus\{0\}\rightarrow[-1,1]$ is $C^1$-smooth and satisfies
			$\lim_{x\rightarrow0^-}f(x)=1$,
			$\lim_{x\rightarrow0^+}f(x)=-1$, $-1<f(x)<1$ and
			$f'(x)>\sqrt{2}$ for every $x\in[-1,1]\setminus\{0\}$.
		\end{itemize}
	\end{itemize}
\end{definition}

\begin{figure}[htbp]
	\centering
	\includegraphics[width=14cm]{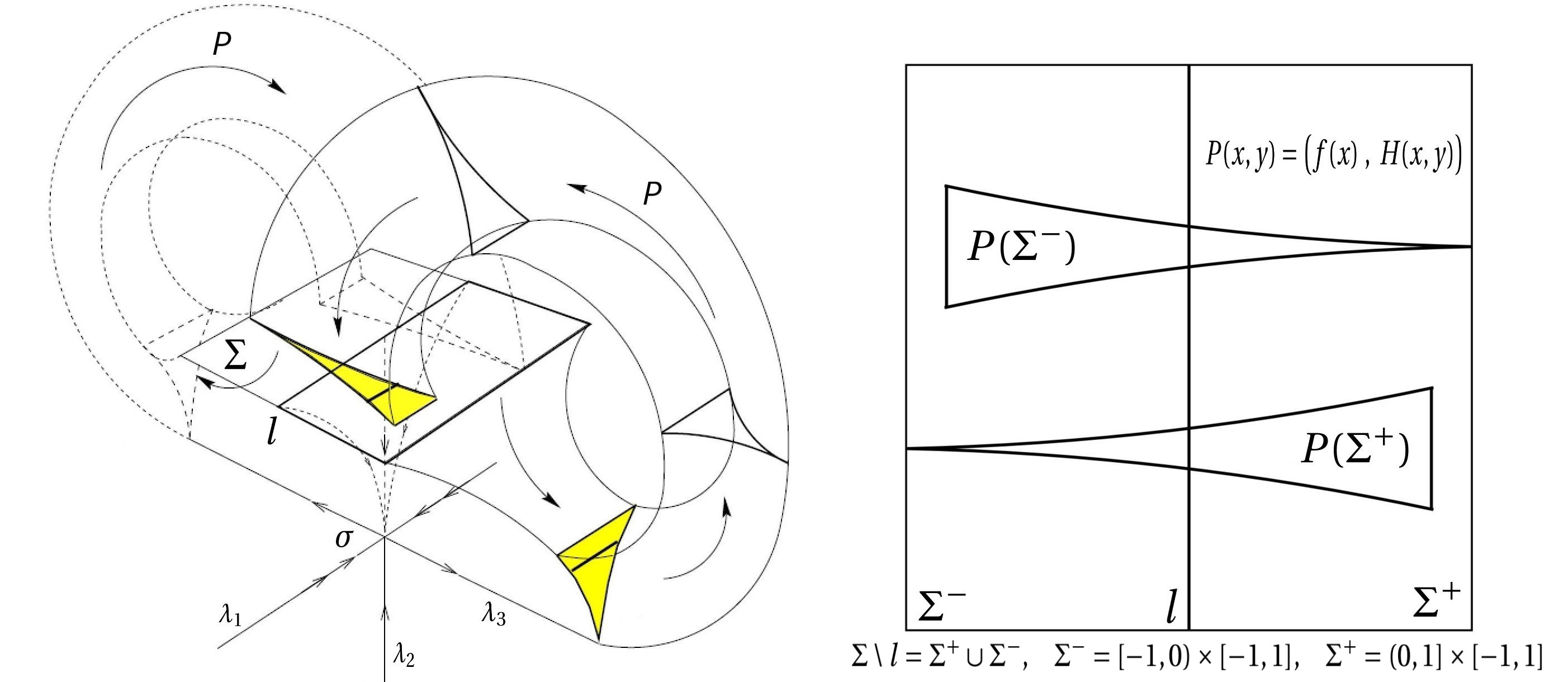}
	\caption{Geometric Lorenz attractor and return map}
\end{figure}

It has been proved that geometric Lorenz attractor is a homoclinic class~\cite[Theorem 6.8]{AP}. Moreover, for $r\in\mathbb{N}_{\geq2}\cup\{\infty\}$, vector fields exhibiting a geometric Lorenz attractor forms a $C^r$-open set in $\mathscr{X}^r(M)$~\cite[Proposition 4.7]{STW} .

\begin{proposition}\label{Pro:lorenz-robust}
	Let $r\in\mathbb{N}_{\geq2}\cup\{\infty\}$  and $X\in\mathscr{X}^r(M^3)$. If $X$ exhibits a geometric Lorenz attractor $\Lambda$ with attracting region $U$, then $\Lambda$ is a singular hyperbolic homoclinic class, and every pair of periodic orbits are homoclinic related.
	Moreover, there exists a $C^r$-neighborhood $\cU$ of $X$ in $\mathscr{X}^r(M^3)$, such that for every $Y\in\cU$, $U$ is an attracting region of $Y$, and the maximal invariant set $\Lambda_Y=\bigcap_{t>0}\phi_t^Y(U)$ is a geometric Lorenz attractor. 
	%
\end{proposition}

\subsection{Topological entropy on general subsets}\label{Section:entropy}

Let $(K,d)$ be a compact metric space. 
The topological entropy of a continuous flow $\Phi=(\phi_t)_t$ on a compact invariant set $Z\subset K$ can be defined as the topological entropy of its time-1 map $\phi_1$ on $Z$. 
When $Z$ fails to be invariant or compact, the topological entropy can be defined using Carath\'eodory structures as in the discrete-time framework, see~\cite{Pesin}, also see~\cite[Section 2.1]{Thompson2009}. 
Let us be more precise.
Fix an arbitrary set $Z\subset K$.
For any $x\in K$, for any $\eps>0$ and any $t\geq0$, consider the \textit{$(t,\eps)$-Bowen ball}
$$B(x,t,\eps,\Phi) \colon= \big\{y\in K:d(\phi_{\tau}(x),\phi_{\tau}(y))<\eps,\tau\in[0,t)\big\}.$$
Now, given a finite or countable collections  
$\gamma=\{B(x_i,t_i,\eps,\Phi)\}_i$ of Bowen balls which covers $Z$ and a fixed $s\in \R$, we define:
\begin{eqnarray*}
	Q(Z,s,\gamma,\varepsilon) &=& \sum_{B(x_i,t_i,\eps,\Phi)\in\gamma}\exp\left(-st_i\right).
\end{eqnarray*}
For $N\in\NN$, define
\begin{eqnarray*}
	M(Z,s,\eps,N) &=& \inf_{\gamma} \;Q(Z,s,\gamma,\varepsilon),
\end{eqnarray*}
where the infimum is taken over all finite or countable collections 
$\gamma=\{B(x_i,t_i,\eps,\Phi)\}_i$ of Bowen balls which covers $Z$ such that $x_i\in Z$ and $t_i\geq N$, for all $i\geq 1$. 
Since $M(Z,s,\eps,N)$ is non-decreasing with respect to $N$, the limit
$$
M(Z,s,\eps)\colon=\lim_{N\rightarrow\infty}M(Z,s,\eps,N)
$$
does exist. It is not hard to check that 
$$
\htop(\Phi,Z,\eps) \colon= \inf \big\{s\colon M(Z,s,\eps)=0\big\}=\sup\big\{s\colon M(Z,s,\eps)=\infty\big\}
$$
is well defined. 
Then the \emph{topological entropy} of $Z$ with respect to the flow $\Phi=(\phi_t)$ is
$$\htop(\Phi,Z)\colon=\lim_{\eps\to0}\htop(\Phi,Z,\eps).$$
Note that similarly with the definition of the metric entropy with respect to a flow, the topological entropy of $Z$ with respect to the flow $\Phi=(\phi_t)_t$ equals to that with respect to its time-one map $\phi_1$.

\subsection{Topological pressure and weak Gibbs measure}
In this subsection, we always assume $X\in\mathcal{X}^1(M)$ and $\Lambda$ is an invariant compact set. Recall that we denote by $\Phi=(\phi_t)_{t}$ the flow generated by $X$. One first recalls the notion of topological pressure associated to a potential and equilibrium state.

\begin{definition}\label{Def:pressure}
	Given a continuous function $\psi\colon\Lambda\rightarrow \mathbb{R}$, here called a {\it potential}, 
	the {\it topological pressure} $P_{\rm top}(\Lambda,\psi)$ is defined  as
	$$P_{\rm top}(\Lambda,\psi)=\sup_{\mu\in \mathcal{M}_{inv}(\Lambda)}\left\{h_{\mu}(X)+\int\psi {\rm d}\mu\right\}.$$
	An invariant probability measure attaining the previous supremum is called an {\it equilibrium state} of $(\phi^X_t)_t$ with respect to $\psi$.
\end{definition}


Now we define the notion of {\it weak Gibbs measure} associated to which we will study the large deviations bounds.
\begin{definition}\label{Def:weak-Gibbs}
	Let $X\in\mathscr{X}^1(M)$ and $\Lambda$ be an invariant compact set. Given a H\"older continuous potential $\psi\colon \Lambda\to\mathbb R$, an invariant measure $\mu_\psi\in\mathcal{M}_{inv}(\Lambda)$ is called a \emph{weak Gibbs measure} with respect to  $\psi$ if there exists a full $\mu_\psi$-measure subset $\Lambda_H \subset \Lambda$ and $\eps_0>0$ so that the following holds: for any $x\in \Lambda_{H}$, $t>0$ and $\varepsilon\in(0,\varepsilon_0)$, there exists a constant $C_t(x,\eps)>1$ such that $\lim\limits_{t\to\infty}\frac1t\log C_t(x,\eps)=0$ and
	\begin{equation}\label{Def:Gibbs-weak}
		\frac1{C_t(x,\eps)} \le \frac{\mu_\psi \Big( B\big(y,t,\eps,\Phi\big)  \Big)}{e^{- t\,P_{\rm top}(X,\psi) + \int_0^t \psi(\phi_s(x))\, {\rm d}s}}
		\le  C_t(x,\eps).
	\end{equation}
	for any dynamic Bowen ball $B\big(y,t,\eps,\Phi\big) \subset B\big(x,t,\eps_0,\Phi\big)$.
\end{definition}

Such a weak Gibbs property in Definition~\ref{Def:weak-Gibbs} holds for large classes of non-uniformly hyperbolic dynamical systems and conformal probability measures with H\"older continuous Jacobians, in which case $\Lambda_H$ may be chosen as the set of points with infinitely many instants of hyperbolicity (see e.g. \cite{Va12} for more details). 

\begin{remark}\label{Rem:Gibbs}
	Consider the extension $C_t: \Lambda \to [1,+\infty]$ defined by $C_t(x,\eps)=+\infty$ for every $x\in \Lambda\setminus \Lambda_H$.
	It is a standing assumption in the sense that, for each $t>0$, the map $C_t: \Lambda \to [1,+\infty)$ is lower semi-continuous, i.e. for each $a \ge 1$ the set
	$$
	\Big\{x\in \Lambda \colon  C_t(x,\eps) \in [1,a] \Big\} \quad\text{is closed}.
	$$
	A special case of weak Gibbs property occurs when $C_t(x,\eps)$ can be chosen as a constant $C_t(\eps)$ independent of $x$.
	In case the constants $C_t(x,\eps)$ can be chosen independently of both $t$ and $x$ and $\Lambda_H=\Lambda$ the probability measure $\mu$ is called a \emph{Gibbs measure}.
\end{remark}

There is a strong connection between equilibrium state and the notion of  Gibbs measures in the context of uniformly hyperbolic dynamical systems: the space of equilibrium states coincides with the space of  Gibbs measures. Most importantly, the quantitative description of dynamic balls make Gibbs measures extremely useful to compute the speed of convergence in the ergodic theorem (see e.g. \cite{Ar07,BV19,CYZ,You90}).

\subsection{Suspension flows}\label{subsec:suspension}

Let $f\colon K\to K$ be a homeomorphism on a compact metric space $(K,d)$ and consider a continuous roof function $\rho\colon K\to(0,+\infty)$.
We define the suspension space to be
$$K_{\rho}=\{(x,s)\in K\times [0,+\infty) \colon 0\leq s\leq \rho(x)\}/\sim,$$
where the equivalence relation $\sim$ identifies $(x,\rho(x))$ with $(f(x),0)$, for all $x\in K$. 
Let
$\pi$
denote the quotient map from $K\times[0,+\infty)$ to $K_{\rho}$. 
We define the flow
$\F=\{f_t\}$ on the quotient space $K_{\rho}$ by
$$f_t(x,s)=\pi(x,s+t).$$
For any function $g\colon K_{\rho}\to \R$, we associate the function $\varphi_g\colon K\to\R$ by $\displaystyle\varphi_g(x)=\int_0^{\rho(x)}g(x,t){\rm d}t$. Since the roof function $\rho$ is continuous, $\varphi_g$ is continuous as long as $g$ is. Moreover, to each invariant probability $\mu\in \mathcal{M}(f,X)$ we associate the measure $\mu_{\rho}$ given by
$$\displaystyle\int_{K_{\rho}}g {\rm d}\mu_{\rho}=\frac{\int_K\varphi_g {\rm d}\mu}{\int_K\rho {\rm d}\mu} \qquad \forall g\in C(K_{\rho},\R). $$
Observe that since $\rho$ is bounded away from zero, the measure $\mu_{\rho}$ is well-defined and $\F$-invariant, i.e. $\mu_{\rho}(f_t^{-1}A)=\mu_{\rho}(A)$ for all $t\geq0$ and measurable sets $A$. Moreover, the map 
$$
\mathcal{R}\colon\mathcal{M}(f,K)\to \mathcal{M}(\F,K_{\rho}) \quad\text{given by} \quad \mu\mapsto\mu_{\rho}
$$ 
is a bijection. Abramov's theorem \cite{Abramov,Parry-Pollicott} states that 
$\displaystyle h_{\mu_{\rho}}(\F)=h_{\mu}(f)/\int\rho {\rm d }\mu$ and hence,
the topological entropy $\htop(\F)$ of the flow satisfies
$$
\htop(\F)=\sup\Big\{h_{\mu_{\rho}}(\F)\colon\mu_{\rho}\in M(\F,K_{\rho})\Big\}=\sup\left\{\frac{h_{\mu}(f)}{\int\rho {\rm d}\mu}\colon\mu\in M(f,K)\right\}.
$$
Throughout we will use the notation $\Phi=(\phi_t)_{t}$ for a flow on a compact metric space and  
$\F=(f_t)_{t}$ for a suspension flow.
Suspension flows are endowed with a natural metric, known as the Bowen-Walters metric (see e.g. \cite{BS00b}).

\subsection{Specification properties}
For completeness of the paper, we recall the notions of specification property introduced in~\cite{Sigmund70} and the gluing orbit property.

\begin{definition}\label{Def:specification}
	Let $K$ be a compact metric space and $f\colon K\rightarrow K$ be a continuous map. We say $f$ satisfies the {\it specification property} if for any $\varepsilon>0$, there exists an integer $m(\varepsilon)$ such that the following holds: for any finite collection $\{[a_i,b_i]\}_{i=1}^k$ of intervals with $a_i,b_i\in\NN$ and $a_{i+1}-b_i\geq m(\varepsilon), i=1,2,\cdots,k-1$ and for any points $x_1,x_2,\cdots,x_k\in K$, there exists a point $x\in K$ such that 
	$$d(f^{a_i+t}(x),f^t(x_i))<\varepsilon,~~\text{for all $t=1,2,\cdots, b_i-a_i$ and all $i=1,2,\cdots k$}.$$
\end{definition}

The gluing orbit property was introduced in~\cite{BoTV20,BV19,Tian-Sun} (which was called transitive specification property in~\cite{Tian-Sun})  and is a weak version of specification property (see explanations for instance in~\cite[Page 553]{Tian-Sun}).

\begin{definition}\label{Def:g-o-property}
	Let $K$ be a compact metric space and $f\colon K\rightarrow K$ be a continuous map. We say $f$ satisfies the {\it gluing orbit property} if for any $\varepsilon>0$, there exists an integer $M(\varepsilon)$ such that the following holds: for any finite collections of points $\{x_i\}_{i=1}^k$ and integers $\{n_i\}_{i=1}^k$, there exist integers $m_1,m_2\cdots,m_{k-1}\leq M(\varepsilon)$ and a point $x\in K$ such that 
	$$d\left(f^{t+\sum_{j=0}^{i-1}(m_j+n_j)}(x),f^t(x_i)\right)<\varepsilon,~~\text{for all $t=1,2,\cdots, n_i-1$ and all $i=1,2,\cdots k$},$$
	where we set $m_0=n_0=0$.
\end{definition}
We refer the readers to~\cite{BoTV20,BV19,Tian-Sun,Thomp} for more studies of systems admitting specification property or the gluing orbit property.

\section{Statements of main theorems}\label{Section:statements}
\subsection{Multifractal analysis}\label{Section:MA}

Recall that for a $C^r(r\geq 1)$-vector field $X\in\mathscr{X}^r(M)$, we denote by $\phi_t^X\colon M\rightarrow M$ the $C^r$-flow generated by $X$ and by ${\rm D}\phi_t^X$ the tangent map of $\phi_t^X$. We also use $\Phi=(\phi_t)_t$ and ${\rm D}\phi_t$ for simplicity if there is no confusion.
In the following, by $M^3$ we mean a $3$-dimensional closed smooth Riemannian manifold. We use the symbol ``$h_{\rm top}(\cdot)$'' to denote the topological entropy of a set, see detailed definitions in Section~\ref{Section:entropy}. Recall that a subset $\mathcal{R}$ of a topological space $X$ is {\it residual} if it contains a dense $G_\delta$ subset of $X$.
Our first main result ensures that the Birkhoff irregular points form a residual subset of geometric Lorenz attractors, and that level sets are typically dense and satisfy
a relative variational principle. More precisely:

\begin{theoremalph}\label{Thm:irregular-Lorenz}
	There exists a Baire residual subset $\mathcal{R}^r\subset\mathscr{X}^r(M^3),~(r\in\mathbb{N}_{\geq 2})$ so that, 
	if $\Lambda$ is a geometric Lorenz attractor of $X\in\mathcal{R}^r$ and $g\in C(\Lambda,\mathbb{R})$, then either:
		\begin{enumerate}
		\item $I_{g}$ is empty and $\displaystyle\int g {\rm d}\mu=\int g {\rm d}\nu$ for all $\mu,\nu\in\mathcal{M}_{inv}(\Lambda)$, or
		\item  $I_{g}$ is a residual subset of $\Lambda$ and  $h_{\rm top}(I_{g})=h_{\rm top}(\Lambda)$. 
	\end{enumerate}  
	Moreover, if $I_{g}$ is non-empty then for any $a\in \mathbb{R}$ satisfying  
		$$
		\displaystyle\inf_{\mu\in \mathcal{M}_{inv}(\Lambda)}\int g {\rm d}\mu<a<\sup_{\mu\in \mathcal{M}_{inv}(\Lambda)} \int g {\rm d}\mu,
		$$  
		the level set $R_{g}(a)$ is dense in $\Lambda$ and 
		$$h_{\rm top}(R_{g}(a))=\sup\left\{h_\mu(X) \colon \int g {\rm d}\mu=a, \mu\in   \mathcal{M}_{inv}(\Lambda)\right\}.$$ 
\end{theoremalph}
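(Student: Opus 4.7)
The plan is to derive the theorem from two generic properties that define the residual set $\mathcal{R}^r$: connectedness of the space $\mathcal{M}_{erg}(\Lambda)$ of ergodic invariant probability measures on a geometric Lorenz attractor $\Lambda$ of $X$ (established in \cite{STW}), and a horseshoe approximation property asserting that any finite family $\mu_1,\dots,\mu_k\in \mathcal{M}_{inv}(\Lambda)$ can be simultaneously weak$^*$-approximated, with entropy control, by ergodic measures supported on a single uniformly hyperbolic horseshoe $H\subset\Lambda$ whose topological entropy can be taken arbitrarily close to $h_{\rm top}(\Lambda)$.

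The dichotomy follows from a direct observation. Consider the continuous functional $\Phi\colon\mathcal{M}_{inv}(\Lambda)\to\RR$, $\Phi(\mu):=\int g\,{\rm d}\mu$. If $\Phi$ is constant equal to $c$, then every weak$^*$ accumulation point of the empirical measures $\frac{1}{T}\int_0^T\delta_{\phi_t x}\,{\rm d}t$ lies in $\mathcal{M}_{inv}(\Lambda)$ and hence integrates $g$ to $c$; in particular Birkhoff averages converge to $c$ at every $x\in\Lambda$, so $I_g=\emptyset$. Otherwise $\Phi$ is non-constant, and I set $a_\pm$ for the sup and inf of $\Phi$.

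Pick $\mu_-,\mu_+\in\mathcal{M}_{inv}(\Lambda)$ with $\Phi(\mu_-)<\Phi(\mu_+)$. For each $\epsilon>0$, apply horseshoe approximation to $\mu_-,\mu_+$ together with an entropy-maximizing measure, obtaining a horseshoe $H_\epsilon$ with $h_{\rm top}(H_\epsilon)\geq h_{\rm top}(\Lambda)-\epsilon$ that contains periodic orbits $\gamma_\pm$ with $\Phi(\mu_{\gamma_-})<\Phi(\mu_{\gamma_+})$. Since $H_\epsilon$ is a basic set with specification, classical multifractal analysis for horseshoes (cf. \cite{BS00a,PS05,TV17}) produces a $g$-irregular subset of $H_\epsilon$ which is Baire residual in $H_\epsilon$ and carries full entropy $h_{\rm top}(H_\epsilon)$. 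Letting $\epsilon\to 0$ and taking the union yields $h_{\rm top}(I_g)\geq h_{\rm top}(\Lambda)$; the reverse inequality is trivial. For the Baire residuality of $I_g$ in $\Lambda$, combine the standard $G_\delta$ representation with density: the horseshoe approximation can be localized to produce horseshoes meeting any prescribed open subset of $\Lambda$, since supports of invariant measures are dense in $\Lambda$.

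For the level sets, fix $a\in(a_-,a_+)$ and set $s(a):=\sup\{h_\mu(X):\Phi(\mu)=a,\,\mu\in\mathcal{M}_{inv}(\Lambda)\}$. Connectedness of $\mathcal{M}_{erg}(\Lambda)$ together with continuity of $\Phi$ guarantees, for each $\eta>0$, some $\mu\in\mathcal{M}_{inv}(\Lambda)$ with $\Phi(\mu)=a$ and $h_\mu(X)\geq s(a)-\eta$. Approximating $\mu$ by an ergodic horseshoe measure of nearby entropy and using specification on the horseshoe to correct the resulting Birkhoff average back to exactly $a$ produces $h_{\rm top}(R_g(a))\geq s(a)-2\eta$; letting $\eta\to 0$ gives the lower bound, while the upper bound $h_{\rm top}(R_g(a))\leq s(a)$ follows from the entropy distribution principle. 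Density of $R_g(a)$ in $\Lambda$ follows from the same localization argument as for $I_g$. The main obstacle is exactly the issue that the horseshoe approximation property resolves: singular hyperbolic attractors lack specification globally on $\Lambda$ and the Dirac masses at singularities are in general not approximable by periodic measures. It is precisely the generic connectedness of $\mathcal{M}_{erg}(\Lambda)$ which permits a single horseshoe of near-maximal entropy to approximate simultaneously distinct invariant measures, thereby reducing the analysis to the classical uniformly hyperbolic setting.
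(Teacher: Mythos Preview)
Your overall strategy matches the paper's: establish a horseshoe approximation property on the generic set $\mathcal{R}^r$, then push the multifractal analysis down to uniformly hyperbolic horseshoes where classical results apply. The paper formalizes this through Proposition~\ref{Pro:Horseshoe-approximation-inv} and Proposition~\ref{Pro:irregular}, and invokes Thompson's results (Theorem~\ref{Pro-Thompson}/Corollary~\ref{Cor:Thompson-HS}) on the horseshoes. Your identification of the key mechanism is correct, though the genericity input from \cite{STW} you actually need is $\overline{\mathcal{M}_1(\Lambda)}=\mathcal{M}_{inv}(\Lambda)$ (density of non-singular measures), not connectedness of $\mathcal{M}_{erg}(\Lambda)$ per se; your use of connectedness to produce a $\mu$ with $\Phi(\mu)=a$ and $h_\mu(X)\ge s(a)-\eta$ is superfluous, since such $\mu$ exists by definition of $s(a)$.

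There are two genuine gaps. First, the upper bound $h_{\rm top}(R_g(a))\le s(a)$ does \emph{not} follow from the entropy distribution principle, which is a \emph{lower} bound tool. The paper obtains this via Bowen's theorem \cite[Theorem~2]{Bowen-entropy}: for $x\in R_g(a)$ every accumulation point $\mu$ of the flow-empirical measures satisfies $\int g\,{\rm d}\mu=a$, hence $h_\mu(X)\le s(a)$; one then checks that the time-one empirical measures also accumulate on a measure of entropy $\le s(a)$ (by averaging along $[0,1]$), so $R_g(a)\subset QR(s(a))$ in Bowen's notation and his bound applies. You need to replace your reference here.

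Second, your density arguments for $I_g$ and $R_g(a)$ in $\Lambda$ are too thin. For $I_g$ the paper invokes \cite[Theorem~A]{CV21}, using that stable manifolds of periodic orbits are dense in $\Lambda$; your localization sketch can be made to work but you should say explicitly that $I_g$ is $G_\delta$ and that $I_g\cap H_\varepsilon$ is dense in $H_\varepsilon$ while the horseshoes $H_\varepsilon$ accumulate on all of $\Lambda$. For $R_g(a)$ the paper's argument is more substantial than your one-line ``same localization'': one builds a \emph{nested} sequence of horseshoes $\Lambda_n$ containing every periodic orbit of $\Lambda$, on each $\Lambda_n$ uses Sigmund's theorem to find fully supported ergodic measures on either side of $a$, takes a convex combination $\nu_n$ with $\int g\,{\rm d}\nu_n=a$, and then invokes the existence of generic points for the \emph{non-ergodic} $\nu_n$ on $\Lambda_n$ (Theorem~\ref{Thm:HS-Sigmund}) to produce points of $R_g(a)$ whose orbits are dense in $\Lambda_n$. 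Your proposal does not supply this step, and the ``correct the Birkhoff average via specification'' line does not obviously yield density in $\Lambda$.
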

\begin{remark}\label{Rem:Lorenz}
	We consider the original geometric model introduced by Guckenheimer and Williams~\cite{guck,gw} where it is required that the induced one-dimensional Lorenz map $f$ satisfies that $f'>\sqrt{2}$, see Definition~\ref{Def:lorenz}. This is because we have to use the eventually onto property of $f$ to obtain that all periodic orbits in the geometric Lorenz attractor are homoclinically related (Definition~\ref{Def:homoclinic-class}), see Proposition~\ref{Pro:lorenz-robust} and Corollary~\ref{Pro:HS-approximation-generic}.
\end{remark}

For singular hyperbolic attractors, one obtains the following result for $C^1$-generic vector fields.

\begin{theoremalph}\label{Thm:irregular-SinHyp-attractor}
	There exists a Baire residual subset $\mathcal{R}\subset\mathscr{X}^1(M)$ so that
	if  $\Lambda$ is  a singular hyperbolic attractor of $X\in\mathcal{R}$ and $g\in C(\Lambda,\mathbb{R})$ then either
	\begin{enumerate}
		\item $I_{g}$ is empty and and $\displaystyle\int g {\rm d}\mu=\int g {\rm d}\nu$ for all $\mu,\nu\in\mathcal{M}_{inv}(\Lambda)$, or
		\item $I_{g}$ is a residual subset of $\Lambda$ and  $h_{\rm top}(I_{g})=h_{\rm top}(\Lambda)$. 
	\end{enumerate}  
			Moreover, if $I_{g}$ is non-empty then, for any $a\in \mathbb{R}$ satisfying  
		$$
		\displaystyle\inf_{\mu\in \mathcal{M}_{inv}(\Lambda)}\int g {\rm d}\mu<a<\sup_{\mu\in \mathcal{M}_{inv}(\Lambda)} \int g {\rm d}\mu,
		$$  
		the level set $R_{g}(a)$ is dense in $\Lambda$ and 
		$$h_{\rm top}(R_{g}(a))=\sup\left\{h_\mu(X) \colon \int g {\rm d}\mu=a, \mu\in   \mathcal{M}_{inv}(\Lambda)\right\}.$$
\end{theoremalph}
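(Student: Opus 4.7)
The plan is to combine the Baire-generic horseshoe approximation property for singular hyperbolic attractors, established in \cite{STW}, with the classical multifractal analysis of uniformly hyperbolic basic sets. The residual set $\mathcal{R}\subset\mathscr{X}^1(M)$ in the statement is the one for which every singular hyperbolic attractor $\Lambda$ of $X\in\mathcal{R}$ enjoys three structural properties: (a) all periodic orbits in $\Lambda$ are pairwise homoclinically related, (b) the space $\mathcal{M}_{erg}(\Lambda)$ is connected, and (c) every $\mu\in\mathcal{M}_1(\Lambda)$ is approximated in $d^*$ and in metric entropy by ergodic measures supported on basic hyperbolic subsets of $\Lambda$. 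I would first dispose of the degenerate case: if $\int g\,d\mu$ is constant over $\mathcal{M}_{inv}(\Lambda)$, every weak$^*$-accumulation point of the empirical measures $\frac{1}{T}\int_0^T \delta_{\phi_t(x)}\,dt$ is invariant and has the same $g$-integral, so compactness of $\mathcal{M}_{inv}(\Lambda)$ forces $I_g=\emptyset$ and every Birkhoff average to converge to that common value.

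For the non-degenerate case, fix $a$ with $\alpha:=\inf_\mu\int g\,d\mu<a<\sup_\mu\int g\,d\mu=:\beta$. Since $\mathcal{M}_{erg}(\Lambda)$ is connected and $\mu\mapsto\int g\,d\mu$ is weak$^*$-continuous, its image is an interval, and ergodic decomposition together with a perturbation inside $\mathcal{M}_{erg}(\Lambda)$ provides ergodic measures $\mu_n\in\mathcal{M}_1(\Lambda)$ with $\int g\,d\mu_n\to a$ and $h_{\mu_n}(X)\to s:=\sup\{h_\mu(X):\int g\,d\mu=a\}$. Property (c) then yields basic hyperbolic sets $\Lambda_n\subset\Lambda$ carrying ergodic measures $\tilde\mu_n$ with $\int g\,d\tilde\mu_n\to a$ and $h_{\tilde\mu_n}(X)\to s$. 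Each $\Lambda_n$ is uniformly hyperbolic and has the specification property, so the classical multifractal analysis for horseshoes (cf.~\cite{TV99}) exhibits, in any open set meeting $\Lambda_n$, subsets of $R_g(a)$ with topological entropy arbitrarily close to $h_{\tilde\mu_n}$; diagonalising yields density of $R_g(a)$ in $\Lambda$ and the lower bound $h_{\rm top}(R_g(a))\geq s$, with the matching upper bound coming from the standard Bowen-type inequality between the entropy of a level set and the entropies of invariant measures with the prescribed integral.

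For the irregular set, I would choose $\mu_1,\mu_2\in\mathcal{M}_{erg}(\Lambda)\cap\mathcal{M}_1(\Lambda)$ with $\int g\,d\mu_1\neq \int g\,d\mu_2$, approximate them by ergodic horseshoe measures $\tilde\mu_1,\tilde\mu_2$, and use (a) together with a standard connecting-orbit argument to embed both horseshoes into a single basic set $H\subset\Lambda$ with $h_{\rm top}(H)$ arbitrarily close to $h_{\rm top}(\Lambda)$. Since $H$ has the specification property, a Pfister--Sullivan-type construction (as in \cite{TV99}) produces a $G_\delta$-dense subset of $H$ whose points have Birkhoff averages oscillating between the two distinct values and whose topological entropy is arbitrarily close to $h_{\rm top}(H)$. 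Letting $h_{\rm top}(H)\to h_{\rm top}(\Lambda)$ gives $h_{\rm top}(I_g)=h_{\rm top}(\Lambda)$, while density of horseshoes in $\Lambda$ combined with a Baire-category argument (the irregular set is a $G_\delta$ once one fixes two continuous functionals separating the Birkhoff averages) upgrades the construction to a residual subset of the full attractor. The main obstacle is precisely why the restriction to $\mathcal{R}$ is needed: singularities preclude any global specification-like property and, a priori, Dirac masses at singularities may fail to be approximated by periodic measures, as recalled in the introduction; the delicate input from \cite{STW} is that properties (a)--(c) hold simultaneously on a residual set and allow every step above to be reduced to a construction inside a uniformly hyperbolic basic set.
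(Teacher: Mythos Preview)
Your overall strategy coincides with the paper's: reduce everything to uniformly hyperbolic horseshoes via the horseshoe approximation property, invoke Thompson/Pfister--Sullivan type results there, and use Bowen's inequality for the upper bound on level-set entropy. The paper packages this as a general Theorem~5.1 for singular hyperbolic homoclinic classes satisfying $\overline{\mathcal{M}_1(\Lambda)}=\mathcal{M}_{inv}(\Lambda)$, then quotes \cite{CY-Robust-attractor,STW} to obtain the residual $\mathcal R$. Two points in your sketch need tightening, however.

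First, the sentence ``ergodic decomposition together with a perturbation inside $\mathcal{M}_{erg}(\Lambda)$ provides ergodic measures $\mu_n\in\mathcal{M}_1(\Lambda)$ with $\int g\,d\mu_n\to a$ and $h_{\mu_n}(X)\to s$'' does not follow from connectedness of $\mathcal{M}_{erg}(\Lambda)$ or from ergodic decomposition: the ergodic components of an optimal $\mu$ need have neither integral $a$ nor entropy near $h_\mu$. What actually gives this is entropy-denseness of $\mathcal{M}_{inv}(\Lambda)$, which the paper derives from the horseshoe approximation property (your property~(c), extended from $\mathcal{M}_1(\Lambda)$ to $\mathcal{M}_{inv}(\Lambda)$ using $\overline{\mathcal{M}_1(\Lambda)}=\mathcal{M}_{inv}(\Lambda)$). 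Connectedness of $\mathcal{M}_{erg}(\Lambda)$ is not used.

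Second, the horseshoes $\Lambda_n$ produced by property~(c) carry ergodic measures $\tilde\mu_n$ with $\int g\,d\tilde\mu_n$ only \emph{close} to $a$, so $R_g(a)\cap\Lambda_n$ may well be empty and the multifractal results on $\Lambda_n$ say nothing about level $a$. The paper fixes this by enlarging each $\Lambda_n$ (using that all periodic orbits are homoclinically related, your property~(a)) so that it also contains measures with $\int g\,d\eta$ strictly on both sides of $a$; a convex combination then hits $a$ exactly, and Thompson's variational principle on $\Lambda_n$ applies. The same enlargement, carried out along a nested sequence of horseshoes exhausting the periodic orbits, is what gives density of $R_g(a)$ in $\Lambda$ (via Sigmund's generic-point theorem), rather than an appeal to ``subsets of $R_g(a)$ in any open set meeting $\Lambda_n$''. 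For residuality of $I_g$ the paper takes a shortcut and cites \cite{CV21}, though your Baire argument via nested horseshoes also works once each horseshoe in the sequence is arranged to contain two periodic measures with distinct $g$-integrals.
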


The proofs of Theorem~\ref{Thm:irregular-Lorenz} and~\ref{Thm:irregular-SinHyp-attractor} will be given in Section~\ref{sec:proofsAB} after the proof of a technical theorem--Theorem~\ref{Thm:irregular-levelset}. 

\begin{remark}\label{Rem:Thm-A-B}
	We give two remarks here.
	\begin{enumerate}
	 \item It is clear from Theorem~\ref{Thm:irregular-Lorenz} that, for typical geometric Lorenz attractors, $I_g$ is non-empty if and only if there exist $\mu,\nu\in\mathcal{M}_{inv}(\Lambda)$
	so that $\int g {\rm d}\mu\neq\int g {\rm d}\nu$, a property satisfied by a $C^0$-open and dense set of continuous observables.
	In particular, if $\mathcal{R}^r\subset\mathscr{X}^r(M^3),~(r\in\mathbb{N}_{\geq 2})$ is the Baire residual subset given by 
	Theorem~\ref{Thm:irregular-Lorenz} and $\Lambda$ is a geometric Lorenz attractor of $X\in\mathcal{R}^r$ then
	$$
	\Big\{g\in C(\Lambda,\mathbb{R})\colon \text{$I_{g}$ is residual in $\Lambda$ and  $h_{\rm top}(I_{g})=h_{\rm top}(\Lambda)$} \Big\}
	$$
	is open and dense in $C(\Lambda,\mathbb{R})$. A similar conclusion holds in the context of Theorem~\ref{Thm:irregular-SinHyp-attractor}.
	
	\item For the characterizations of level sets in Theorem~\ref{Thm:irregular-SinHyp-attractor} \& Theorem~\ref{Thm:irregular-Lorenz}, when 
	$$a=\displaystyle\inf_{\mu\in \mathcal{M}_{inv}(\Lambda)}\int g {\rm d}\mu  \text{~~~or~~~}  a=\displaystyle\sup_{\mu\in \mathcal{M}_{inv}(\Lambda)} \int g {\rm d}\mu,$$
	by~\cite[Theorem 1.1]{FH10}, one also has the variational principle that 
	$$h_{\rm top}(R_{g}(a))=\sup\left\{h_\mu(X) \colon \int g {\rm d}\mu=a, \mu\in   \mathcal{M}_{inv}(\Lambda)\right\}.$$
	\end{enumerate}
\end{remark}

\subsection{Large deviations}\label{Section:LD}

Recall that $\mathcal M(\Lambda)$ denotes the space of all probability measures supported on $\Lambda$ endowed with the weak*-topology.
Our next result is a level-2 large deviations principle, which detects exponential convergence to equilibrium on the space $\mathcal M(\Lambda)$ for a singular hyperbolic attractor or a Lorenz attractor $\Lambda$. 
We need to set further notations. 
Given $t>0$, let $\cE_t\colon \Lambda \to \mathcal M(\Lambda)$ be the \emph{empirical measure function at time t}, defined by
$$
\cE_t(x)\colon=\frac1t\int_0^t \delta_{\phi^X_s(x)} \, {\rm d}s.
$$
In other words, $\cE_t(x)$ is the {empirical probability on $\Lambda$} determined by the point $x\in \Lambda$ at time $t$.
We say that an invariant probability measure $\mu$ satisfies a \emph{level-2 large deviations principle} if there exists a lower-semicontinuous rate function $\mathfrak I \colon \mathcal M(\Lambda) \to [0,+\infty]$ so that 
\begin{align*}
\limsup_{t\to +\infty} \frac1t \log \mu \Big(\Big\{ x\in \Lambda \colon \cE_t(x)\in \cK\Big\} \Big)
    \le  -\inf_{\nu \in \cK} \mathfrak I(\nu)
\end{align*}
for any closed subset $\cK\subset \mathcal M(\Lambda)$, and
\begin{align*}
\liminf_{t\to +\infty} \frac1t \log \mu \Big( \Big\{x\in \Lambda \colon \cE_t(x)\in \cO\Big\} \Big)
    \ge  -\inf_{\nu \in \cO} \mathfrak I(\nu)
\end{align*}
for any open subset $\cO \subset \mathcal M(\Lambda)$. If the latter holds then the contraction principle (see e.g. \cite[Theorem~4.2.1]{DZ}) ensures the
level-1 large deviations principle
\begin{align*}
\limsup_{t\to +\infty} \frac1t \log \mu \Big(\Big\{ x\in \Lambda \colon \frac1t\int_0^t g(\phi^X_s(x)) \, {\rm d}s  \in [a,b] \Big\} \Big)
    \le  -\inf \Big\{ \mathfrak I(\nu) \colon \nu\in \mathcal M(\Lambda) \,\&\, \int g\, d\nu \in [a,b]\Big\}
\end{align*}
and
\begin{align*}
\liminf_{t\to +\infty} \frac1t \log \mu \Big(\Big\{ x\in \Lambda \colon \frac1t\int_0^t g(\phi^X_s(x)) \, {\rm d}s  \in (a,b) \Big\} \Big)
    \ge   -\inf \Big\{ \mathfrak I(\nu) \colon \nu\in \mathcal M(\Lambda) \,\&\, \int g\, d\nu \in (a,b) \Big\}
\end{align*}
for each continuous observable $g: \Lambda \to \mathbb R$.

Our next result establishes large deviations bounds for weak Gibbs measures, in which the rate function is expressed 
in terms of thermodynamic quantities. 
For a continuous potential $\psi\colon \Lambda\to\mathbb R$, define 
$\mathfrak I_\psi\colon \mathcal M(\Lambda) \to [0,+\infty]$ by
$$
\mathfrak I_\psi(\mu) 
=
\begin{cases}
	\begin{array}{cl}
		\displaystyle P_{\rm top}(\Lambda,\psi) - h_\mu(X) - \int {\psi} \, {\rm d}\mu & ,\text{if } \mu\in \mathcal M_{inv}(\Lambda); \\
		+\infty & ,\text{otherwise. } 
	\end{array}
\end{cases}
$$
When $\Lambda$ is singular hyperbolic, the entropy map  
$h\colon\mathcal{M}_{inv}(\Lambda)\rightarrow\mathbb{R}, ~~~\mu\mapsto h_{\mu}(X)$
is upper semi-continuous~\cite{PYY}, therefore $\mathfrak I_\psi$ is lower semi-continuous.

\begin{theoremalph}\label{Thm:LD2B-Lorenz-SH-attractor} (Level-2 large deviations) 
	There exist a Baire residual subset $\mathcal{R}^r\subset\mathcal{X}^r(M^3),~(r\in\mathbb{N}_{\geq 2})$ and a Baire residual subset $\mathcal{R}\subset\mathcal{X}^1(M)$ such that if $\Lambda$ is a Lorenz attractor of $X\in \mathcal{R}^r$ or $\Lambda$ is a singular hyperbolic attractor of $X\in\mathcal{R}$, then the following properties are satisfied.\\
	Assume $\mu_\psi$ is a weak Gibbs measure with respect to a H\"older continuous potential $\psi\colon \Lambda\to\mathbb R$ with $\Lambda_H$ being the $\mu_\psi$-full measure set such that \eqref{Def:Gibbs-weak} satisfies. 
	Then one has:
	\begin{enumerate}
		\item\label{item:LD-upper} (upper bound) There exists $c_\infty\leq 0$ so that 
		\begin{align*}
			\limsup_{t\to\infty} & \frac1t \log \mu_\psi \big(\{x\in \Lambda \colon \cE_t(x) \in \mathcal K\} \big)  
			\le 
			\max\Big\{ 
			-\inf_{\mu \in \cK} \mathfrak I_{\psi}(\mu) \;,\;  c_\infty
			\Big\}
		\end{align*}
		for any closed subset $\cK\subset \mathcal M(\Lambda)$.
		
		\item\label{item:LD-lower} (lower bound)    If $\cO \subset \mathcal M(\Lambda)$ is an open set, 
		then
		\begin{align*}
			\liminf_{t\to +\infty} \frac1t \log \mu_\psi \Big( \Big\{x\in \Lambda \colon \cE_t(x)\in \cO \Big\} \Big)
			\ge  \displaystyle -\inf~\big\{\mathfrak I_{\psi}(\nu)\colon \nu\in \cO\cap\mathcal{M}_{erg}(\Lambda), \nu(\Lambda_H)=1\big\}.
		\end{align*}
		
		\item\label{item:LD-lower-Gibbs} (lower bound for Gibbs measure) If $\mu_\psi$ is a Gibbs measure with respect to $\psi$, then
		\begin{align*}
			\liminf_{t\to +\infty} \frac1t \log \mu_\psi \Big( \Big\{x\in \Lambda \colon \cE_t(x)\in \cO\Big\} \Big)
			\ge  -\inf_{\mu \in \cO} \mathfrak I_{\psi}(\mu)
		\end{align*}
		for any open subset $\cO \subset \mathcal M(\Lambda)$.
		
	\end{enumerate}

\end{theoremalph}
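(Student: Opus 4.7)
The plan is to treat the three items separately, with the upper and lower bounds each leveraging different facets of the weak Gibbs inequality~\eqref{Def:Gibbs-weak} and of the horseshoe approximation technique that drives the paper.

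\textbf{Upper bound (Item~\ref{item:LD-upper}).} Fix a closed $\cK \subset \mathcal M(\Lambda)$. I would run a Misiurewicz-style scheme: for each $\mu \in \cK \cap \mathcal M_{inv}(\Lambda)$ select a weak$^*$ neighborhood $U_\mu$ so small that $\int \psi \, d\nu \le \int \psi \, d\mu + \delta$ and $h_\nu(X) \le h_\mu(X)+\delta$ for every $\nu \in \overline{U_\mu}$, using the upper semicontinuity of the entropy map from \cite{PYY}; points of $\cK$ outside $\mathcal M_{inv}(\Lambda)$ are handled by a uniform Birkhoff estimate since their $\mathfrak I_\psi$-value is $+\infty$. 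Standard counting bounds any $(t,\varepsilon)$-separated set in $\{x : \cE_t(x) \in \overline{U_\mu}\}$ by $e^{t(h_\mu + 2\delta)}$. To convert this into a bound on $\mu_\psi$-mass, I split by the size of $C_t(x,\varepsilon)$: lower semicontinuity makes $\{C_t(\cdot,\varepsilon) \le e^{\alpha t}\}$ closed, and on this piece the weak Gibbs \emph{upper} inequality yields $\mu_\psi(B(x,t,\varepsilon)) \le e^{\alpha t} e^{-t P_{\rm top}(\Lambda,\psi)+S_t\psi(x)}$, so summation over the separated cover gives
$$\mu_\psi\Big(\{\cE_t(x) \in \overline{U_\mu}\} \cap \{C_t \le e^{\alpha t}\}\Big) \le e^{t(\alpha - \mathfrak I_\psi(\mu) + 3\delta)}.$$
The complementary set $\{C_t > e^{\alpha t}\}$ contributes a rate $c(\alpha) := \limsup_t \tfrac{1}{t}\log \mu_\psi(\{C_t(\cdot,\varepsilon) > e^{\alpha t}\})$; a finite subcover of $\cK$, optimization over $\alpha$, and $\delta \to 0$ deliver the stated maximum with $c_\infty = \inf_{\alpha > 0} \max\{-\alpha, c(\alpha)\} \le 0$.

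\textbf{Lower bound for ergodic $\nu$ (Item~\ref{item:LD-lower}).} Let $\nu \in \cO$ be ergodic with $\nu(\Lambda_H) = 1$. Take a basic weak$^*$ neighborhood $\nu \in U \subset \cO$ defined by finitely many continuous observables $g_1, \dots, g_k$. Applying Birkhoff's theorem to $\psi, g_1, \dots, g_k$ together with the lower semicontinuity of $C_t$ produces a $\nu$-full-measure set $F_\delta$ of points $y$ satisfying, for all large $t$, simultaneously $\cE_t(y) \in U$, $\big|\tfrac{1}{t} S_t\psi(y) - \int \psi \, d\nu\big| < \delta$ and $C_t(y,\varepsilon) \le e^{\delta t}$. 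Choose $\varepsilon$ small enough that uniform continuity of $\psi, g_1, \dots, g_k$ propagates $(t,\varepsilon)$-closeness into $\cO$. Katok's entropy formula then supplies a $(t,\varepsilon)$-separated set $E_t \subset F_\delta$ with $\#E_t \ge e^{t(h_\nu - \delta)}$, and the $(t,\varepsilon/2)$-Bowen balls centered at $E_t$ are pairwise disjoint and contained in $\{x : \cE_t(x) \in \cO\}$. The weak Gibbs \emph{lower} inequality then yields
$$\mu_\psi\big(\{\cE_t(x)\in\cO\}\big) \ge \sum_{y\in E_t} e^{-\delta t - tP_{\rm top}(\Lambda,\psi) + S_t\psi(y)} \ge e^{t(h_\nu - P_{\rm top}(\Lambda,\psi) + \int \psi \, d\nu - 3\delta)},$$
and sending $\delta \to 0$ closes the item.

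\textbf{Full LDP for Gibbs measures (Item~\ref{item:LD-lower-Gibbs}).} When $\mu_\psi$ is a (strong) Gibbs measure, $C_t$ is uniformly bounded and $\Lambda_H = \Lambda$, so Item~\ref{item:LD-lower} applies to every ergodic measure. For $\mu \in \cO \cap \mathcal M_{inv}(\Lambda)$ with $\mathfrak I_\psi(\mu) < \infty$, the horseshoe approximation property underpinning Theorems~\ref{Thm:irregular-Lorenz} and \ref{Thm:irregular-SinHyp-attractor} furnishes a sequence of basic sets $\Gamma_n \subset \Lambda$ carrying ergodic measures $\nu_n$ (e.g.\ their measures of maximal entropy) with $\nu_n \to \mu$ in weak$^*$ and $h_{\nu_n}(X) \to h_\mu(X)$. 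Continuity of $\psi$ gives $\int\psi\,d\nu_n \to \int\psi\,d\mu$, so for $n$ large $\nu_n \in \cO$; applying Item~\ref{item:LD-lower} to $\nu_n$ yields a liminf bound converging to $-\mathfrak I_\psi(\mu)$, and taking the infimum over $\mu \in \cO$ concludes.

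\textbf{Main obstacle.} The delicate step is the lower bound: one must simultaneously ensure that Bowen balls at $\nu$-typical points remain inside the weak$^*$ open set $\cO$ (which caps $\varepsilon$ via uniform continuity of the finitely many test observables) and apply Katok's formula (which ultimately forces $\varepsilon \to 0$). Reconciling the two through a careful $\delta$-$\varepsilon$ bookkeeping, while absorbing the weak Gibbs tail $C_t$ into an $e^{\delta t}$ error that disappears in the limit, is the technical linchpin. For Item~\ref{item:LD-lower-Gibbs} the additional subtlety is that the approximating horseshoe measures must be ergodic with entropy and $\psi$-integral close to those of $\mu$; this is precisely the horseshoe approximation output that the standing hypotheses --- homoclinic relatedness of periodic orbits and connectedness of the space of ergodic measures --- are designed to guarantee.
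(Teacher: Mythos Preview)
Your proposal is correct and follows the same architecture as the paper: weak Gibbs upper inequality plus Misiurewicz-type counting for the upper bound, weak Gibbs lower inequality plus large separated sets of $\nu$-typical points for Item~\ref{item:LD-lower}, and the horseshoe approximation property (available under the standing hypotheses, cf.\ Proposition~\ref{Pro:Horseshoe-approximation-inv}) to upgrade to arbitrary $\mu\in\cO$ for Item~\ref{item:LD-lower-Gibbs}.

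Two packaging differences are worth flagging. For the upper bound the paper does \emph{not} cover $\cK$ by entropy-controlled weak$^*$ neighborhoods; instead it first slices $\cK$ into thin convex pieces $\cK_j=\{\eta:\int\psi\,d\eta\in[-\|\psi\|_\infty+j\delta,-\|\psi\|_\infty+(j+1)\delta]\}$, bounds the separated-set count on each slice by a general lemma (Lemma~\ref{le:auxPS2}, a flow version of \cite[Lemma~3.1]{PS05}), and only afterwards invokes upper semicontinuity of entropy to drop convexity. This sidesteps your ``uniform Birkhoff estimate'' for non-invariant points of $\cK$, which as stated is vague: the paper's lemma automatically outputs a supremum over $\overline{\cK_j}\cap\mathcal M_{inv}(\Lambda)$ without any separate treatment. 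Also, the paper's constant is simply $c_\infty=\limsup_{\delta\to0}\limsup_t\frac1t\log\mu_\psi(\{C_t>e^{\delta t}\})$ (see \eqref{cinfty}); your expression $\inf_{\alpha>0}\max\{-\alpha,c(\alpha)\}$ is a different quantity and does not arise from your own computation, which after $\delta\to0$ yields $\inf_\alpha\max\{\alpha-\mathfrak I_\psi(\mu),c(\alpha)\}$ and hence, sending $\alpha\to0$, exactly the paper's $c_\infty$. For Item~\ref{item:LD-lower} the paper makes the flow-to-map passage explicit: it picks $s_0>0$ with $\nu$ ergodic for $\phi_{s_0}$ (Pugh--Shub), applies \cite[Proposition~2.1]{PS05} to the time-$s_0$ map to get the separated set inside $\Lambda_H\cap\{C_{s_0n}\le e^{\delta s_0n}\}$, and then controls $d^*(\cE_t(x),\cE^f_n(x))$ by choosing $s_0$ small. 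Your direct appeal to ``Katok's entropy formula'' is the same idea but leaves implicit why the separated set can be taken inside the $\nu$-full-measure set $F_\delta$; the paper's footnote to \eqref{eq:mod} records exactly this adjustment.
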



Some comments are in order. 
Theorem~\ref{Thm:LD2B-Lorenz-SH-attractor} can be compared to the local large deviations principle established by Rey-Bellet and Young \cite{RBY}.
The constant $c_\infty$, defined by~\eqref{cinfty}, measures tails of constants appearing in the concept of weak Gibbs measure. 
If $\mu_\psi$ is a (strong) Gibbs measure then: (i) $c_\infty=-\infty$ and the upper bound in the theorem reduces to 
$-\inf_{\mu \in \cK} \mathfrak I_{\psi}(\mu)$, and (ii) there is no need to restrict to probability measures supported on $\Lambda_H$ and the lower bound reduces to
$-\inf_{\mu \in \cO} \mathfrak I_{\psi}(\mu)$ (item~\ref{item:LD-lower-Gibbs}). 
The function $\mathfrak I_\psi(\mu)$ satisfying Theorem~\ref{Thm:LD2B-Lorenz-SH-attractor} is called the large deviations rate function.
Moreover, while large deviations for flows usually involve the use of local cross-sections and Poincar\'e maps, creating dynamical systems with non-compact phase spaces and discontinuities (see e.g.\cite{Ar07,AST19}),
our approach deals with the flows (and time-t maps) directly, hence it avoids creating such technical obstructions.

\begin{remark}\label{Rem:LD}
	We will prove two general results Theorem~\ref{Thm:irregular-levelset} and Theorem~\ref{Thm:LD2B} stating that when $\Lambda$ is a singular hyperbolic homoclinic class such that each pair of periodic orbits are homoclinically related and $\overline{\mathcal M_{1}(\Lambda)}=\mathcal M_{inv}(\Lambda)$, then the conclusions of Theorem~\ref{Thm:irregular-Lorenz},~\ref{Thm:irregular-SinHyp-attractor} and~\ref{Thm:LD2B-Lorenz-SH-attractor} holds. Then Theorem~\ref{Thm:irregular-Lorenz} and~\ref{Thm:irregular-SinHyp-attractor} are direct consequences of Theorem~\ref{Thm:irregular-levelset} and~\ref{Thm:LD2B-Lorenz-SH-attractor} is a direct consequence of Theorem~\ref{Thm:LD2B}. The reason is that when 
	$\Lambda$ is a Lorenz attractor of vector fields in a Baire residual subset $\mathcal{R}^r\subset\mathcal{X}^r(M^3), (r\in\mathbb{N}_{\geq 2})$ or $\Lambda$ is a singular hyperbolic attractor $\Lambda$ of vector fields in a Baire residual set $\mathcal{R}\subset\mathcal{X}^1(M)$,
	then $\Lambda$ is a homoclinic class such that each pair of periodic orbits are homoclinically related (cf.~\cite[Theorem 6.8]{AP} for Lorenz attractors and~\cite[Theorem B]{CY-Robust-attractor} for singular hyperbolic attractors) and $\overline{\mathcal M_{1}(\Lambda)}=\mathcal M_{inv}(\Lambda)$ (cf.~\cite[Theorem A \& B]{STW}). See also Proposition~\ref{Pro:lorenz-robust} and Corollary~\ref{Pro:HS-approximation-generic} in this paper.
\end{remark}


\section{Entropy denseness and the horseshoe approximation property}\label{Section:ED-HS-approximation}

We give a criterion to study  multifractal analysis and large deviations for flows beyond uniform hyperbolicity.

\subsection{Entropy denseness of horseshoes}
Recall that for each invariant compact set $\Lambda$ of a vector field $X\in\mathscr{X}^1(M)$, we denote by $\mathcal{M}_{inv}(\Lambda)$ and $\mathcal{M}_{erg}(\Lambda)$ the space of invariant and ergodic probability measures supported on $\Lambda$, respectively, and that $d^*$ is a translation invariant metric
on $\mathcal M(\Lambda)$ compatible with the weak$^*$ topology.

\begin{definition}\label{Def:entropy-dense}
	Given $X\in\mathscr{X}^1(M)$ and an invariant compact  set $\Lambda$. 
	We say a convex subset $\mathcal{M}\subseteq \mathcal{M}_{inv}(\Lambda)$ is \emph{entropy-dense} if for any $\varepsilon>0$ and any $\mu\in \mathcal{M}$, there exists $\nu\in \mathcal{M}_{erg}(\Lambda)$ satisfying 
	$$d^*(\mu,\nu)<\varepsilon~~~\text{and}~~~ h_{\nu}(X)>h_{\mu}(X)-\varepsilon.$$
\end{definition}

It follows from the definition that the entropy denseness property is hereditary, i.e. if $\mathcal M_1\subset \mathcal M_2 \subset \mathcal M_{inv}(\Lambda)$ are
convex and $\mathcal M_2$ is entropy dense, so is $\mathcal M_1$. 
In what follows we discuss some consequences of approximating invariant sets by horseshoes. 
In particular the strongest conclusion is entropy denseness of the entire space $\mathcal M_{inv}(\Lambda)$.
We now recall the definition of horseshoe.

\begin{definition}\label{Def:horseshoe}
	Given $X\in\mathscr{X}^1(M)$, 
	an invariant compact set $\Lambda$ is called a \emph{basic set} if  it is, transitive, hyperbolic and  locally maximal, i. e. there exists an open neighborhood $U$ of $\Lambda$, such that $\Lambda=\cap_{t\in\RR}\phi_t(U)$.
	A basic set $\Lambda$ is called a \emph{horseshoe} if $\Lambda$ is a proper subset of $M$, is not reduced to a single orbit of a hyperbolic critical element
	and its intersection with any local cross-section to the flow is totally disconnected.
\end{definition}

Following the classical arguments of Bowen~\cite{Bowen72,Bowen73} on 
Axiom A vector fields,  every horseshoe is semi-conjugate to the suspension of a transitive subshift of finite type (SFT) with a continuous roof function through a finite-to-one continuous map.  See also a detailed explanation in~\cite[Section 2.4]{BS00b}. We formulate this as the following theorem.

\begin{theorem}[Bowen]\label{Thm:HS-semiconjugate-suspension}
	Assume $\Lambda$ is a horseshoe 
	of $X\in\mathcal{X}^1(M)$. Then there exists a transitive subshift of finite type $(\Sigma,\sigma)$ and a continuous roof function $\rho\colon \Sigma\rightarrow\mathbb{R}^+$, such that $(\Lambda,\Phi)$ is semi-conjugate to the suspension $(\Sigma_\rho,\F)$ through a continuous surjective map $\pi\colon \Sigma_\rho\rightarrow \Lambda$, where $\Phi=(\phi_t)_t$ is the flow generated by $X$ and $\F=(\sigma_t)_t$ is the suspension flow. Moreover, the semi-conjugacy $\pi$ is finite-to-one and hence preserves entropy: 
	\begin{enumerate}
		\item for any $\mu\in\mathcal{M}_{inv}(\Sigma_\rho,\F)$, there exists a unique $\nu\in\mathcal{M}_{inv}(\Lambda)$ satisfying $\mu=\pi^*(\nu)$ and their metric entropies coincide $h_{\mu}(\F)=h_{\nu}(X)$;
		\item for any invariant set $Z$ of $(\Sigma_\rho,\F)$, it satisfies $h_{\rm top}(Z,\F)=h_{\rm top}(\pi(Z),X)$;
	\end{enumerate}
\end{theorem}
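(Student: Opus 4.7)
My plan is to imitate Bowen's construction for Axiom A flows, adapted to the horseshoe setting. The starting point is the geometric observation that, since $\Lambda$ is a horseshoe (containing only regular orbits, as it is disjoint from critical elements) and its intersection with any local cross-section is totally disconnected, one can build a finite family of pairwise disjoint smooth local cross-sections $\Sigma_1, \dots, \Sigma_N$ transverse to $X$ whose union intersects every orbit through $\Lambda$ with return time bounded both from above and away from zero. The first-return map $P\colon \widehat\Lambda \to \widehat\Lambda$ on $\widehat\Lambda := \bigsqcup_{i} (\Sigma_i \cap \Lambda)$ is then a hyperbolic homeomorphism on a compact totally disconnected set with local product structure, and the return-time function $\tau\colon \widehat\Lambda \to (0,+\infty)$ is continuous and bounded away from $0$.

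Next, I would invoke Bowen's construction of Markov partitions for locally maximal hyperbolic sets, applied to $P$ on $\widehat\Lambda$. This produces a Markov partition $\{R_1, \dots, R_k\}$ with arbitrarily small rectangles, from which one extracts a transition matrix, a transitive subshift of finite type $(\Sigma, \sigma)$, and a continuous surjection $\widetilde\pi\colon \Sigma \to \widehat\Lambda$ that conjugates $\sigma$ with $P$ and is finite-to-one with a uniform bound on fiber cardinality (boundary points of rectangles admit several codings, but only finitely many). Setting $\rho := \tau \circ \widetilde\pi\colon \Sigma \to (0,+\infty)$, which is continuous and bounded away from $0$, the suspension $(\Sigma_\rho, \F)$ of Section~\ref{subsec:suspension} is well defined, and the semi-conjugacy $\pi\colon \Sigma_\rho \to \Lambda$ is given by $\pi([\omega, s]) := \phi_s(\widetilde\pi(\omega))$. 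The intertwining identity $\pi \circ f_t = \phi_t \circ \pi$ is immediate, continuity follows from that of the flow and of $\widetilde\pi$, and surjectivity reflects that the cross-sections meet every orbit of $\Lambda$.

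The key verification is that $\pi$ is uniformly finite-to-one. Given $z \in \Lambda$, flow backward until first meeting $\widehat\Lambda$ at some $z_0$ after time $s_0 \in [0, \|\tau\|_\infty)$; the preimage $\pi^{-1}(z)$ then identifies with $\widetilde\pi^{-1}(z_0) \times \{s_0\}$ modulo the suspension equivalence, which is finite with cardinality bounded by a constant independent of $z$. With this property in hand, conclusion~(1) follows from the classical fact that a continuous finite-to-one factor map preserves entropy (Ledrappier--Walters, applied to the time-$1$ maps of $\F$ and $\Phi$): for each $\mu \in \mathcal{M}_{inv}(\Sigma_\rho, \F)$ the push-forward $\nu := \pi_* \mu$ is the only element of $\mathcal{M}_{inv}(\Lambda)$ satisfying $\mu = \pi^*(\nu)$, and $h_\mu(\F) = h_\nu(X)$. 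Conclusion~(2) follows by the same principle at the level of the Carath\'eodory structures introduced in Section~\ref{Section:entropy}: covers of $Z$ by Bowen balls in $\Sigma_\rho$ map under $\pi$ to covers of $\pi(Z)$ by sets of comparable $(t,\eps)$-size, and conversely covers of $\pi(Z)$ lift to covers of $Z$ with multiplicity at most a fixed constant, so the exponential growth rates of optimal covers match and $\htop(Z, \F) = \htop(\pi(Z), X)$.

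The principal obstacle I anticipate is the transfer of Bowen's diffeomorphism-level Markov partition construction to the flow setting on $\Lambda$: it requires a careful selection of transverse cross-sections without tangencies and uniform control on both return times and fiber cardinalities of $\widetilde\pi$. All of this, however, is available from the absence of singularities in $\Lambda$ together with the local product structure of the hyperbolic splitting, and is precisely what makes Bowen's scheme portable to the horseshoe setting.
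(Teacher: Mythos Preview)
Your proposal is correct and follows precisely the classical Bowen construction that the paper itself invokes. Note that the paper does not supply its own proof of this theorem: it is stated as a known result attributed to Bowen~\cite{Bowen72,Bowen73}, with a pointer to~\cite[Section~2.4]{BS00b} for a detailed account, so your sketch recovers exactly the argument those references provide.
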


In the 1970's, Sigmund~\cite{Sigmund70,Sigmund} studied the space of invariant measures of basic sets for Axiom A systems. The following theorem is from~\cite[Theorems 2\&3]{Sigmund}. 

\begin{theorem}[Sigmund]\label{Thm:HS-Sigmund}
	Assume $\Lambda$ is a horseshoe of $X\in\mathcal{X}^1(M)$. The following properties are satisfied.
	\begin{enumerate}
		\item\label{item:mu-generic-HS} For any $\mu\in\mathcal{M}_{inv}(\Lambda)$, the set 
$\displaystyle G_{\mu}:=\left\{x\in M \colon \lim_{t\rightarrow \infty}\frac{1}{t}\int_{0}^{t}\delta_{\phi_s(x)} {\rm d}s=\mu \right\}$
of $\mu$-generic points is non-empty;		
		\item\label{item:generic-measure-HS} There exists a residual subset $\mathcal{G}\subset\mathcal{M}_{inv}(\Lambda)$ such that each $\mu\in\mathcal{G}$ is ergodic and $\supp(\mu)=\Lambda$.
	\end{enumerate}
\end{theorem}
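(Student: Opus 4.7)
The plan is to reduce both items to classical facts about transitive subshifts of finite type, via the Bowen semiconjugacy of Theorem~\ref{Thm:HS-semiconjugate-suspension}. Let $(\Sigma,\sigma)$ be the transitive SFT, $\rho\colon\Sigma\to\mathbb R^+$ the continuous roof function, and $\pi\colon\Sigma_\rho\to\Lambda$ the finite-to-one continuous surjection semiconjugating the suspension flow $\F=(\sigma_t)_t$ to $\Phi$. By Section~\ref{subsec:suspension}, the Abramov-type map $\mathcal R\colon\mathcal M_{inv}(\sigma,\Sigma)\to\mathcal M_{inv}(\F,\Sigma_\rho)$ is a bijection, and by Theorem~\ref{Thm:HS-semiconjugate-suspension} the push-forward $\pi_*\colon\mathcal M_{inv}(\F,\Sigma_\rho)\to\mathcal M_{inv}(\Lambda)$ is a weak$^*$-continuous surjection preserving entropy.

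For item~\ref{item:mu-generic-HS}, fix $\mu\in\mathcal M_{inv}(\Lambda)$ and choose $\hat\mu\in\mathcal M_{inv}(\sigma,\Sigma)$ with $\pi_*\mathcal R(\hat\mu)=\mu$. Since $(\Sigma,\sigma)$ is a transitive SFT it satisfies the specification property, so Sigmund's classical gluing argument applies: one Choquet-decomposes $\hat\mu$ into ergodic components $\int\hat\nu\,d\lambda(\hat\nu)$, partitions the support into finitely many small weak$^*$-pieces, picks in each piece an ergodic representative $\hat\nu_i$ together with (by Birkhoff) a point generic for it, and then splices longer and longer finite orbit segments through the chosen generic points in the proportions $\lambda_i$ using specification, producing a point $\hat x\in\Sigma$ whose empirical averages converge to $\hat\mu$. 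The suspended point $\tilde x=[(\hat x,0)]\in\Sigma_\rho$ is then $\F$-generic for $\mathcal R(\hat\mu)$, a standard computation identifying empirical averages along the $\F$-orbit with averages of the functions $\varphi_g(x)=\int_0^{\rho(x)}g(x,s)\,{\rm d}s$ against $\hat\mu$; here continuity and positivity of $\rho$ are essential. By continuity of $\pi$ and the intertwining relation $\pi\circ\sigma_t=\phi_t\circ\pi$, the image $\pi(\tilde x)$ lies in $G_\mu$.

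For item~\ref{item:generic-measure-HS}, we work directly in the compact metrizable space $\mathcal M_{inv}(\Lambda)$. Density of periodic measures in $\mathcal M_{inv}(\Lambda)$ follows from item~\ref{item:mu-generic-HS} applied to periodic measures on $\Sigma$ (dense in $\mathcal M_{inv}(\sigma,\Sigma)$ by specification) together with the surjection $\pi_*$, and such measures are ergodic. The set $\mathcal M_{erg}(\Lambda)$ is $G_\delta$: fixing a countable dense $\{g_n\}\subset C(\Lambda,\mathbb R)$, a measure $\mu$ is ergodic iff for every $n$ and every rational $\varepsilon>0$ there is $T_0$ so that $\int\bigl|\tfrac1T\int_0^T g_n\circ\phi_s\,{\rm d}s-\int g_n\,{\rm d}\mu\bigr|^2\,{\rm d}\mu<\varepsilon$ for all $T\ge T_0$, a condition defining a countable intersection of weak$^*$-open sets. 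The set of fully supported measures is also $G_\delta$: for a countable basis $\{U_k\}$ of $\Lambda$ it equals $\bigcap_k\{\mu:\mu(U_k)>0\}$, each factor being weak$^*$-open by Portmanteau. Density of the latter follows by fixing any $\mu_\star\in\mathcal M_{inv}(\Lambda)$ with $\supp(\mu_\star)=\Lambda$ (constructible as a weighted sum of periodic measures along a dense sequence of periodic orbits, which exist by transitivity) and convex-combining $(1-\varepsilon)\mu+\varepsilon\mu_\star$. The Baire category theorem applied to $\mathcal M_{inv}(\Lambda)$ yields the desired residual $\mathcal G$.

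The main technical obstacle is the gluing step in item~\ref{item:mu-generic-HS}: turning the specification property of the symbolic base into genuine generic points for an arbitrary, possibly non-ergodic, invariant measure requires careful bookkeeping of the sizes of the spliced blocks and of the weak$^*$-approximation errors so that the empirical measures converge along the \emph{entire} trajectory and not only along a subsequence. A secondary subtlety is that $\pi_*$ is merely a continuous surjection and need not push residual sets to residual sets; this is why item~\ref{item:generic-measure-HS} is proved directly in $\mathcal M_{inv}(\Lambda)$ rather than transferred from $\mathcal M_{inv}(\F,\Sigma_\rho)$.
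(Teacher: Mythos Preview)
The paper does not prove this theorem: it is quoted from Sigmund~\cite{Sigmund70,Sigmund} (Theorems~2 and~3 there), with only the remark that Sigmund's arguments for basic sets carry over to horseshoes. Your proposal therefore goes well beyond what the paper supplies, and your overall strategy---reduce to the symbolic model via Bowen's semiconjugacy for item~\ref{item:mu-generic-HS}, and argue by Baire category directly in $\mathcal M_{inv}(\Lambda)$ for item~\ref{item:generic-measure-HS}---is sound and is essentially how Sigmund's original proofs proceed.

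One technical point deserves attention. Your $G_\delta$ characterization of $\mathcal M_{erg}(\Lambda)$ via the $L^2$ ergodic theorem has a quantifier structure $\forall n\,\forall\varepsilon\,\exists T_0\,\forall T\ge T_0$, and the innermost universal ranges over an uncountable set of real $T$; even after restricting to rational $T$, the resulting set is a countable union of $G_\delta$ sets, which is not obviously $G_\delta$. The clean way around this is to invoke the standard fact from Choquet theory that the set of extreme points of a compact convex metrizable set is $G_\delta$; since $\mathcal M_{erg}(\Lambda)$ is exactly the set of extreme points of $\mathcal M_{inv}(\Lambda)$, this settles the matter without the $L^2$ computation. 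The rest of your argument for item~\ref{item:generic-measure-HS}---density of periodic (hence ergodic) measures via the continuous surjection $\pi_*\circ\mathcal R$, and density of fully supported measures by convex combination with a fixed $\mu_\star$---is correct, as is the observation that continuous surjections between compact spaces send dense sets to dense sets.
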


\begin{remark*}
	Although Sigmund's original statements were for basic sets, his arguments could be easily applied to horseshoes (isolated hyperbolic non-trivial transitive sets). See also remarks after~\cite[Theorem 3.5]{abc}
\end{remark*}

The following lemma states that two horseshoes are contained in a larger one once they are homoclinically related. The proof applies the $\lambda$-lemma~\cite[Lemma 7.1]{Palis-de Melo} (also see~\cite[Theorem 5.1]{Wen-book}) and is classical, thus we omit it.
\begin{lemma}\label{Lem:large-horseshoe}
	Let $\Lambda_1$ and $\Lambda_2$ be two horseshoes of $X\in\mathcal{X}^1(M)$. Assume there exists hyperbolic periodic points $p_1\in\Lambda_1$ and $p_2\in\Lambda_2$ such that $\orb(p_1)$ and $\orb(p_2)$ are homoclinically related. Then there exists a larger horseshoe $\Lambda$ that contains both $\Lambda_1$ and $\Lambda_2$.
\end{lemma}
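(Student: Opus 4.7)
The plan is to transfer the statement to a discrete-time version via Poincar\'e sections, apply the classical Smale--Birkhoff construction of a common horseshoe for two homoclinically related horseshoes of a diffeomorphism, and then suspend the result back to the flow.

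First, I would choose finitely many open cross-sections transverse to $X$ whose union $\Sigma$ meets both $\orb(p_1)$ and $\orb(p_2)$, as well as compact pieces of two transverse heteroclinic orbits between $\orb(p_1)$ and $\orb(p_2)$ realizing the homoclinic relation. The Poincar\'e first-return map $P\colon \Sigma\to \Sigma$ is a local $C^1$ diffeomorphism, and the intersections $\widehat\Lambda_i := \Lambda_i\cap \Sigma$ are compact hyperbolic basic sets for $P$; since $\Lambda_i$ is a horseshoe of the flow, each $\widehat\Lambda_i$ is in fact a horseshoe of $P$, with hyperbolic periodic points $\widehat p_i$ induced by $p_i$. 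The flow-homoclinic relation between $\orb(p_1)$ and $\orb(p_2)$ descends to a homoclinic relation between $\widehat p_1$ and $\widehat p_2$ for $P$: both $W^u_P(\widehat p_1)\pitchfork W^s_P(\widehat p_2)$ and $W^u_P(\widehat p_2)\pitchfork W^s_P(\widehat p_1)$ are non-empty.

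Second, I would build a common horseshoe $\widehat\Lambda$ for $P$. Picking transverse heteroclinic points $q_{12}\in W^u_P(\widehat p_1)\pitchfork W^s_P(\widehat p_2)$ and $q_{21}\in W^u_P(\widehat p_2)\pitchfork W^s_P(\widehat p_1)$, the compact $P$-invariant set $K_0 := \widehat\Lambda_1 \cup \widehat\Lambda_2 \cup \overline{\orb_P(q_{12})} \cup \overline{\orb_P(q_{21})}$ is hyperbolic, because the stable and unstable bundles of $\widehat\Lambda_1$ and $\widehat\Lambda_2$ extend continuously along the heteroclinic orbits via the $\lambda$-lemma. Choose an isolating neighborhood $U$ of $K_0$ small enough that hyperbolicity persists on the maximal $P$-invariant set $\widehat\Lambda := \bigcap_{n\in\Z} P^n(U)$. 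Then $\widehat\Lambda$ is a locally maximal hyperbolic set containing $\widehat\Lambda_1 \cup \widehat\Lambda_2$, totally disconnected on local cross-sections. Transitivity of $\widehat\Lambda$ follows by standard $\lambda$-lemma arguments: any two relatively open pieces can be joined using transitivity of each $\widehat\Lambda_i$ together with the presence of transverse heteroclinic orbits. Hence $\widehat\Lambda$ is a horseshoe of $P$.

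Third, the flow-saturation $\Lambda := \bigcup_{t\in\mathbb{R}} \phi_t(\widehat\Lambda)$ is a compact $\Phi$-invariant set, hyperbolic for $X$ with splitting $E^s \oplus \langle X \rangle \oplus E^u$ inherited from $P$ and the flow direction, transitive since the suspension of a transitive map is transitive, locally maximal in a tubular neighborhood of itself, and has totally disconnected intersection with each local cross-section (essentially $\widehat\Lambda$). Therefore $\Lambda$ is a horseshoe of $X$ containing $\Lambda_1$ and $\Lambda_2$. The main technical obstacle lies in step two: one must verify that the isolating neighborhood $U$ can be chosen so that the maximal $P$-invariant set in $U$ remains both hyperbolic and transitive. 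Both are classical consequences of the $\lambda$-lemma combined with Smale's horseshoe construction from a transverse homoclinic intersection, which is presumably why the authors relegate the argument to a standard reference.
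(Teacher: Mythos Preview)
Your sketch is correct and follows precisely the classical route the paper alludes to: the authors omit the proof entirely, writing only that it ``applies the $\lambda$-lemma and is classical,'' and your reduction to a Poincar\'e return map, $\lambda$-lemma construction of a common locally maximal hyperbolic set containing both $\widehat\Lambda_1$ and $\widehat\Lambda_2$, and re-suspension to the flow is exactly the standard argument they have in mind. The technical point you flag---that the isolating neighborhood must be chosen so that the maximal invariant set is simultaneously hyperbolic and transitive---is indeed the only place requiring care, and is handled (as you say) by the $\lambda$-lemma together with Smale's horseshoe construction.
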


The next proposition ensures that horseshoes are entropy-dense.

\begin{proposition}\label{Pro:horseshoe-entropy-dense}
Assume that $X\in\mathscr{X}^1(M)$. If $\Lambda$ is a horseshoe then $\mathcal{M}_{inv}(\Lambda)$ is entropy-dense. 
\end{proposition}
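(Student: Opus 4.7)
The strategy is to transport the entropy-density property up from a transitive subshift of finite type, where it is classical, through the semi-conjugacy provided by Theorem~\ref{Thm:HS-semiconjugate-suspension}.

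First, by that theorem $(\Lambda,\Phi)$ is the image, under a finite-to-one entropy-preserving continuous factor $\pi$, of a suspension flow $(\Sigma_\rho,\mathcal{F})$ over a transitive SFT $(\Sigma,\sigma)$ with continuous roof function $\rho$ bounded away from zero. Item (1) of that theorem, combined with the bijection $\mathcal{R}\colon \mathcal{M}_{inv}(\Sigma,\sigma)\to \mathcal{M}_{inv}(\Sigma_\rho,\mathcal{F})$, $\mu_0\mapsto (\mu_0)_\rho$, recalled in Section~\ref{subsec:suspension}, yields a bijection
$$
\Psi\colon \mathcal{M}_{inv}(\Sigma,\sigma)\longrightarrow \mathcal{M}_{inv}(\Lambda).
$$
The map $\Psi$ is weak$^*$-continuous (by continuity of $\rho$ and $\pi$ and positivity of $\int\rho\,d\mu_0$), sends ergodic measures to ergodic measures (ergodicity is preserved by both correspondences), and by Abramov's formula together with the entropy preservation of $\pi$ satisfies
$$
h_{\Psi(\mu_0)}(\Phi)\;=\;\frac{h_{\mu_0}(\sigma)}{\int \rho\,d\mu_0}.
$$

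Next, given $\mu\in\mathcal{M}_{inv}(\Lambda)$ and $\varepsilon>0$, let $\mu_0\in\mathcal{M}_{inv}(\Sigma,\sigma)$ satisfy $\Psi(\mu_0)=\mu$. I would then invoke the classical entropy-density of invariant measures for a transitive SFT---a consequence of the specification property (see e.g.\ Eizenberg--Kifer--Weiss or Pfister--Sullivan)---to obtain, for any $\delta>0$, a measure $\nu_0\in\mathcal{M}_{erg}(\Sigma,\sigma)$ with
$$
d^*(\nu_0,\mu_0)<\delta \qquad\text{and}\qquad h_{\nu_0}(\sigma)>h_{\mu_0}(\sigma)-\delta.
$$
Setting $\nu:=\Psi(\nu_0)\in\mathcal{M}_{erg}(\Lambda)$, weak$^*$-continuity of $\Psi$ at $\mu_0$ gives $d^*(\nu,\mu)<\varepsilon$ once $\delta$ is small enough, while continuity of $\rho$ forces $\int\rho\,d\nu_0\to\int\rho\,d\mu_0$, so the identity above yields $h_\nu(\Phi)\to h_\mu(\Phi)$ as $\delta\to 0$. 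A suitable choice of $\delta$ produces the required ergodic approximation.

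The only substantive external input is entropy-density for transitive SFTs; the rest is quantitative book-keeping through the two bijections. The main delicate point is that the entropy on $\Lambda$ is not simply $h_{\nu_0}(\sigma)$ but is rescaled by the denominator $\int\rho\,d\nu_0$, so a single choice of $\delta$ must simultaneously control the weak$^*$-distance, the entropy gap on $\Sigma$, and the variation of $\int\rho\,d\nu_0$. Positivity and continuity of $\rho$ on the compact base $\Sigma$ make this elementary, but it is the step where constants must be tracked carefully.
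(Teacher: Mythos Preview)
Your proposal is correct and follows essentially the same route as the paper: reduce via the finite-to-one semiconjugacy of Theorem~\ref{Thm:HS-semiconjugate-suspension} to the suspension flow, invoke entropy-density for the base transitive SFT (the paper cites \cite{EKW94}, using the gluing orbit property), and transfer the ergodic approximation back through Abramov's formula, controlling the denominator $\int\rho\,d\nu_0$ by weak$^*$-continuity. The paper leaves the passage from $\Sigma_\rho$ back to $\Lambda$ implicit (``we may deal directly with the case of the suspension flow''), whereas you package both correspondences into a single map $\Psi$; this is cosmetic. One small imprecision: your phrase ``$h_\nu(\Phi)\to h_\mu(\Phi)$'' should really be $\liminf h_\nu(\Phi)\ge h_\mu(\Phi)$, since you only control $h_{\nu_0}(\sigma)$ from below, but this is exactly what you need and the paper's explicit chain of inequalities makes the same point.
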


\begin{proof}
Although the result is probably known we shall include a proof as we could not find a reference. 
Assume that $\Lambda$ is a horseshoe, by Theorem~\ref{Thm:HS-semiconjugate-suspension}, it is semiconjugate to a suspension flow $\F$ over a transitive subshift of finite type $(\Sigma,\sigma)$ with a continuous roof function $\rho$. As the semi-conjugacy preserves entropy, we may deal directly with the case of the suspension flow $\F$. Moreover, since the entropy map 
is upper-semicontinuous and $\sigma$ satisfies the gluing orbit property (also known as transitive specification), 
Theorem~B in \cite{EKW94} guarantees that $\sigma$ is entropy-dense. 

Fix $\varepsilon>0$ and an arbitrary $\F$-invariant probability measure $\mu_\rho$, determined by a $\sigma$-invariant probability measure $\mu$ (recall Subsection~\ref{subsec:suspension}). 
Take $\delta>0$ small, to be determined later. Since the subshift of finite type $\sigma$ is entropy-dense, one picks a $\sigma$-invariant and ergodic probability $\nu$ so that 
\begin{itemize}
	\item $d^*(\mu,\nu)$ is small enough such that $$1-\delta<\frac{\int\rho {\rm d}\mu}{\int\rho {\rm d}\nu}<1+\delta.$$
	\item $h_{\nu}(\sigma)>h_{\mu}(\sigma)-\delta.$
\end{itemize}

Then the Abramov formula for the $\F$-invariant probabilities $\mu_\rho$ and $\nu_\rho$ (recall Subsection~\ref{subsec:suspension}) ensures that 
$$\displaystyle
h_{\nu_{\rho}}(\F)=\frac{h_{\nu}(\sigma)}{\int\rho {\rm d}\nu} 
	> \frac{\int\rho {\rm d}\mu}{\int\rho {\rm d}\nu} \cdot \Big(h_{\mu_{\rho}}(\F) -\frac{\delta}{\int\rho {\rm d}\mu}\Big) 
	>(1-\delta)\cdot \Big(h_{\mu_{\rho}}(\F) -\frac{\delta}{\int\rho {\rm d}\mu}\Big) 
	> h_{\mu_{\rho}}(\F) - \varepsilon 
$$
provided that $\delta$ is small enough. Diminishing $\delta$ if necessary we may also get that $\mu_\rho$ and $\nu_\rho$ are also $\varepsilon$-close in the $d^*$-metric.  This completes the proof of the proposition.
\end{proof}

\subsection{Horseshoe approximation property for singular hyperbolic homoclinic classes}\label{Sec:HS-approximation}
We introduce a notion of \emph{horseshoe approximation property}, a condition stronger than
the entropy-denseness condition in Definition~\ref{Def:entropy-dense}. 


\begin{definition}\label{Def:HS-appro-II}
	Given $X\in\mathscr{X}^1(M)$ and an invariant compact set $\Lambda$,
	we say a convex subset $\mathcal{M}\subseteq \mathcal{M}_{inv}(\Lambda)$ has the \emph{horseshoe approximation property} if for each $\varepsilon>0$ and any $\mu\in \mathcal{M}$, 
	there exist a horseshoe $\Lambda'\subset\Lambda$ and $\nu\in\mathcal{M}_{erg}(\Lambda')$ 
	\begin{equation}\label{eq:defHSSapproII}
	d^*(\nu,\mu)<\varepsilon ~~~\text{and}~~~h_{\nu}(X)>h_{\mu}(X)-\varepsilon.
	\end{equation}
\end{definition}

\begin{remark}\label{Rem:HS-approximation}
	By Proposition~\ref{Pro:horseshoe-entropy-dense}, if $\Lambda$ is a horseshoe, then $\mathcal{M}_{inv}(\Lambda)$ admits the horseshoe approximation property naturally.
	Moreover, it is clear from \eqref{eq:defHSSapproII} in the previous definition, 
that the horseshoe $\Lambda'$ 
satisfies $h_{\rm top}(\Lambda')>h_{\mu}(X)-\varepsilon$. 
%
Finally, in some specific contexts the approximating horseshoe $\Lambda'$ can be constructed using an analogue of 
Katok's arguments~\cite{Katok} and all measures supported on $\Lambda'$ are within an $\varepsilon$-neighborhood (in the 
weak$^*$ topology) of the original probability $\mu$ (see e.g. Lemma~\ref{Lem:Horseshoe-entropy-ergodic}).

\end{remark}


The horseshoe approximation property will be essential in the technique to deal with multifractal analysis. 
We proceed to analyse the horseshoe approximation in the context of singular hyperbolic sets.
Given a compact and invariant set $\Lambda$ of $X\in\mathscr{X}^1(M)$, denote by $\mathcal{M}_{per}(\Lambda)$ the set of periodic measures supported on $\Lambda$ and set   $$\mathcal{M}_1(\Lambda)=\left\{\mu\in \mathcal{M}_{inv}(\Lambda): \mu(\sing(\Lambda))=0 \right\} ~~\text{and}~~ \mathcal{M}_0(\Lambda)=\mathcal{M}_{erg}(\Lambda)\cap \mathcal{M}_1(\Lambda).$$ 
We first prove the following auxiliary lemma.

\begin{lemma}\label{Lem:periodic measure dense}
	Let $X\in\mathscr{X}^1(M)$ and $\Lambda$ be a singular hyperbolic homoclinic class of $X$. If each pair of periodic orbits of $\Lambda$ are homoclinic related, then $$\overline{\mathcal{M}_{per}(\Lambda)}=\overline{\emph{Convex}(\mathcal{M}_0(\Lambda))}=\overline{\mathcal{M}_1(\Lambda)},$$
	where $\emph{Convex}(\mathcal{M}_0(\Lambda))$ is the convex hull of $\mathcal{M}_0(\Lambda)$.
\end{lemma}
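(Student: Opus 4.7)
The plan is to show $\overline{\mathcal{M}_{per}(\Lambda)}\subseteq\overline{\text{Convex}(\mathcal{M}_0(\Lambda))}\subseteq\overline{\mathcal{M}_1(\Lambda)}\subseteq\overline{\mathcal{M}_{per}(\Lambda)}$. The first two inclusions are essentially free: a periodic measure is ergodic and charges a regular periodic orbit, hence sits in $\mathcal{M}_0(\Lambda)$; and any finite convex combination $\sum_i\alpha_i\nu_i$ of elements of $\mathcal{M}_0(\Lambda)$ is invariant and satisfies $(\sum_i\alpha_i\nu_i)(\sing(\Lambda))=\sum_i\alpha_i\,\nu_i(\sing(\Lambda))=0$, so it lies in $\mathcal{M}_1(\Lambda)$. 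Taking weak$^*$ closures yields the two forward inclusions.

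For the reverse inclusion $\overline{\mathcal{M}_1(\Lambda)}\subseteq\overline{\mathcal{M}_{per}(\Lambda)}$, I would proceed in three steps. First, given $\mu\in\mathcal{M}_1(\Lambda)$, the ergodic decomposition $\mu=\int\nu\,dP(\nu)$ combined with integrating the identity $\mu(\sing(\Lambda))=0$ against $P$ forces $\nu\in\mathcal{M}_0(\Lambda)$ for $P$-a.e.\ $\nu$; a standard weak$^*$ partition-of-unity argument then writes $\mu$ as a limit of finite convex combinations $\sum_{i=1}^k \alpha_i \nu_i$ with $\nu_i\in\mathcal{M}_0(\Lambda)$. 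Second, each ergodic $\nu_i$ is a hyperbolic measure, because it avoids $\sing(\Lambda)$ and the sectional expansion of $E^{cu}$ together with the contraction of $E^{ss}$ give strictly positive and strictly negative Lyapunov exponents at $\nu_i$-a.e.\ point; a Katok-type closing/shadowing argument in the singular-hyperbolic setting (the same mechanism underlying the horseshoe approximation property of Definition~\ref{Def:HS-appro-II}) then produces a horseshoe $\Lambda_i\subset\Lambda$ together with a hyperbolic periodic orbit $\orb(p_i)\subset\Lambda_i$ satisfying $d^*(\delta_{\orb(p_i)},\nu_i)<\varepsilon$. Third, since by hypothesis every pair of periodic orbits of $\Lambda$ is homoclinically related, the orbits $\orb(p_1),\dots,\orb(p_k)$ are pairwise related; iterating Lemma~\ref{Lem:large-horseshoe} engulfs $\Lambda_1,\dots,\Lambda_k$ in a single horseshoe $\Lambda^\prime\subset\Lambda$, and Sigmund's classical density of periodic measures in the invariant measures of a basic set (a companion of Theorem~\ref{Thm:HS-Sigmund}, easily recovered from the existence of $\mu$-generic points combined with Birkhoff averaging) approximates $\sum_i \alpha_i\delta_{\orb(p_i)}\in \mathcal{M}_{inv}(\Lambda^\prime)$ by periodic measures in $\Lambda^\prime\subset\Lambda$. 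Chaining the two approximations places $\mu$ in $\overline{\mathcal{M}_{per}(\Lambda)}$.

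The main obstacle is the Katok-type step, since homoclinic classes need not be Lyapunov stable and abstract closing arguments a priori only produce periodic orbits in a neighborhood of $\Lambda$. I expect to resolve this by restricting to a Pesin block of $\nu_i$ kept uniformly away from $\sing(\Lambda)$ (permissible because $\nu_i(\sing(\Lambda))=0$), applying singular-hyperbolic shadowing inside $\Lambda$, and invoking the homoclinic-relation hypothesis to certify that the shadowing periodic orbit transversally intersects the stable and unstable manifolds of a fixed reference periodic orbit of $\Lambda$, so that it actually belongs to the homoclinic class.
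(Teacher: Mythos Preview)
Your argument is correct and follows the same chain of inclusions as the paper, but the paper's execution is considerably shorter because it invokes two external results as black boxes where you work things out by hand. For the key inclusion $\overline{\mathcal{M}_1(\Lambda)}\subseteq\overline{\mathcal{M}_{per}(\Lambda)}$, the paper first cites \cite[Proposition~3.1]{STW} to obtain $\mathcal{M}_0(\Lambda)\subset\overline{\mathcal{M}_{per}(\Lambda)}$ directly --- this is exactly your Katok-type closing step, already packaged for singular-hyperbolic homoclinic classes and already guaranteeing the periodic orbit lies in $\Lambda$, so your ``main obstacle'' is in fact resolved in the literature. Then, instead of your third step (engulfing the $\orb(p_i)$ in a common horseshoe via Lemma~\ref{Lem:large-horseshoe} and appealing to Sigmund to approximate the convex combination by a single periodic measure), the paper simply cites \cite[Proposition~4.7 \& Remark~4.6]{abc}: since all periodic orbits are homoclinically related, the closure $\overline{\mathcal{M}_{per}(\Lambda)}$ is itself convex, and hence contains $\overline{\text{Convex}(\mathcal{M}_0(\Lambda))}$ immediately. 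Your horseshoe-plus-Sigmund argument is in effect a hands-on reproof of this convexity, so what you gain is self-containment; what the paper gains is brevity.
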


\begin{proof}
	As the second equality above 
	is immediate from the ergodic decomposition theorem we are left to prove the first one. By~\cite[Proposition 3.1]{STW},   $\mathcal{M}_0(\Lambda)\subset \overline{\mathcal{M}_{per}(\Lambda)}$. Since $\Lambda$ is a homoclinic class, the set $\overline{\mathcal{M}_{per}(\Lambda)}$ is convex by~\cite[Proposition 4.7\& Remark 4.6]{abc}. Hence we have that 
	$\overline{\text{Convex}(\mathcal{M}_0(\Lambda))}\subset \overline{\mathcal{M}_{per}(\Lambda)}$. The inclusion 
	 $\overline{\mathcal{M}_{per}(\Lambda)}\subset\overline{\text{Convex}(\mathcal{M}_0)}$ is obvious since 
	 $\mathcal{M}_{per}(\Lambda)\subset\mathcal{M}_0(\Lambda)$.
\end{proof}

	Given $X\in\mathscr{X}^1(M)$, let $\Lambda$ be an invariant compact  set displaying a singular hyperbolic splitting $T_{\Lambda}M=E^{ss}\oplus E^{cu}$. Then for any ergodic measure $\mu\in\mathcal{M}_{0}(\Lambda)$, the splitting $E^{ss}\oplus E^{cu}|_{\supp(\mu)}$ is a dominated splitting and the index of $\mu$ equals $\dim(E^{ss})$ obviously. Using Katok's arguments in~\cite[Theorem 4.3]{Katok} one obtains the following lemma.
    Given $X\in\mathscr{X}^1(M)$ and assume $\Lambda$ is an invariant compact set. We say that $X$ satisfies the {\it star} condition on $\Lambda$ if there exist a neighborhood $U$ of $\Lambda$ and a $C^1$-neighborhood $\mathcal{U}$ of $X$ in $\mathscr{X}^1(M)$ such that every periodic orbit contained in $U$ associated to a vector field $Y\in\mathcal{U}$  is hyperbolic.

\begin{lemma}\label{Lem:Horseshoe-entropy-ergodic}
	Let $X\in\mathscr{X}^1(M)$ and $\Lambda$ be a singular hyperbolic homoclinic class of $X$. 
	Assume each pair of periodic orbits of $\Lambda$ are homoclinic related, and $\mu\in\mathcal{M}_{0}(\Lambda)$. 
	Then for any $\varepsilon>0$ there exists a horseshoe $\Lambda_{\varepsilon}\subset\Lambda$ 
	contained in the $\varepsilon$-neighborhood of $\supp(\mu)$ (in the Hausdorff distance), and so that
	$d^*(\mu,\nu)<\varepsilon$ for any $\nu\in\mathcal{M}_{inv}(\Lambda_\varepsilon)$, and there exists $\nu_0\in\mathcal{M}_{erg}(\Lambda_\varepsilon)$ satisfying 
	$h_{\nu_0}(X)>h_{\mu}(X)-\varepsilon$.
	
\end{lemma}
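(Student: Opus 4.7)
The plan is to execute a Katok-type horseshoe approximation tailored to the singular hyperbolic setting. Since $\mu\in\mathcal{M}_0(\Lambda)$ is ergodic with $\mu(\sing(\Lambda))=0$, the singular hyperbolic splitting $T_\Lambda M=E^{ss}\oplus E^{cu}$ restricted to $\supp(\mu)$ yields uniform contraction along $E^{ss}$, a zero Lyapunov exponent along the flow direction $\langle X\rangle\subset E^{cu}$, and a strictly positive exponent along $E^{cu}/\langle X\rangle$ (the latter via sectional expansion applied to a $2$-plane spanned by $X$ and a transverse direction, together with Oseledets). Thus $\mu$ is hyperbolic of stable index $\dim(E^{ss})$, which is exactly the setting in which Katok's construction operates.

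First I would fix a $\mu$-generic point $x_0$ whose entire orbit is bounded away from $\sing(\Lambda)$ and along which Oseledets--Pesin regularity holds, and choose $T>0$ large enough that the empirical measure $\frac{1}{T}\int_0^T\delta_{\phi_s(x_0)}\,\mathrm{d}s$ lies within $\varepsilon/4$ of $\mu$ in $d^*$; the orbit arc $\Gamma_T:=\{\phi_s(x_0):0\le s\le T\}$ then sits in the $(\varepsilon/4)$-neighborhood of $\supp(\mu)$. Next I would apply Katok's construction to $\phi_1$ at $\mu$, in the $C^1$ dominated-splitting form valid in the singular hyperbolic setting as used in~\cite{STW}, anchoring the Pesin blocks along $\Gamma_T$ and using as a seed a hyperbolic periodic orbit of $\Lambda$ close to $\Gamma_T$ (provided by Lemma~\ref{Lem:periodic measure dense}). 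This produces a hyperbolic basic set $K$ for $\phi_1$ such that: (i) $K$ sits inside an arbitrarily small tubular neighborhood of $\Gamma_T$; (ii) every orbit of $K$ shadows $\Gamma_T$ closely enough that every $\phi_1$-invariant probability on $K$ is $\varepsilon/2$-close to $\mu$ in $d^*$; and (iii) $\htop(\phi_1|_K)>h_\mu(\phi_1)-\varepsilon/2$. Because the seed belongs to $\Lambda$ and every periodic orbit of $K$ is homoclinically related to it through the transverse intersections produced by the construction, the hypothesis that every pair of periodic orbits of $\Lambda$ is homoclinically related (together with $C^1$ persistence of transverse homoclinic intersections) forces $K\subset\Lambda$.

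The remaining steps are routine. Saturating, $\Lambda_\varepsilon:=\bigcup_{t\in[0,1]}\phi_t(K)$ is a compact invariant hyperbolic subset of $\Lambda$ lying in the $(\varepsilon/2)$-neighborhood of $\supp(\mu)$ in Hausdorff distance, and is a horseshoe for the flow (it is transitive and locally maximal by construction, is a proper subset of $M$, and meets local cross-sections in totally disconnected sets). The weak-$*$ correspondence and Abramov's formula from Subsection~\ref{subsec:suspension} transport (ii) and (iii) from $K$ to $\Lambda_\varepsilon$, so every $\nu\in\mathcal{M}_{inv}(\Lambda_\varepsilon)$ satisfies $d^*(\mu,\nu)<\varepsilon$; moreover Theorem~\ref{Thm:HS-semiconjugate-suspension} combined with the variational principle supplies an ergodic $\nu_0\in\mathcal{M}_{erg}(\Lambda_\varepsilon)$ with $h_{\nu_0}(X)\ge\htop(\Lambda_\varepsilon)-\varepsilon/2>h_\mu(X)-\varepsilon$. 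The main obstacle is the second paragraph: executing Katok's construction in the $C^1$ singular-hyperbolic regime while simultaneously enforcing the Hausdorff localization of $K$, the weak-$*$ proximity of \emph{every} invariant measure on $K$ to $\mu$, and the inclusion $K\subset\Lambda$. Since $\sing(\Lambda)$ may accumulate on $\supp(\mu)$, the Pesin blocks must be anchored strictly along $\Gamma_T$ where the dominated splitting supplies uniform hyperbolicity constants, and the shadowing radius must be chosen small enough to keep $K$ inside the prescribed neighborhood and prevent leakage towards the singular set.
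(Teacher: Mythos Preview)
Your overall approach matches the paper's: a Katok-type horseshoe approximation of the hyperbolic ergodic measure $\mu$. The paper's proof is much terser---it observes that singular hyperbolicity of the homoclinic class forces the star condition in a neighborhood of $\Lambda$, and then invokes \cite[Proposition~2.9]{lsww} (see also \cite[Theorem~4.1]{PYY}) as a black box that delivers $\Lambda_\varepsilon\subset\Lambda$ directly for the flow, with the entropy bound, the Hausdorff localization, and the weak$^*$ proximity of all invariant measures on $\Lambda_\varepsilon$ built in; the ergodic $\nu_0$ then comes from the variational principle exactly as you indicate.

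One genuine concern with your route: the detour through the time-$1$ map introduces a gap you do not address. A $\phi_1$-hyperbolic basic set $K$ saturated to $\bigcup_{t\in[0,1]}\phi_t(K)$ is not automatically a horseshoe for the flow in the sense of Definition~\ref{Def:horseshoe}: the $\phi_1$-hyperbolic splitting on $K$ need not isolate the flow direction as a neutral bundle, and local maximality for the flow does not follow from local maximality for $\phi_1$. The construction in \cite{lsww} avoids this by working directly with the flow via cross-sections and shadowing for star flows, which simultaneously handles the $C^1$ regularity (no Pesin theory is needed), the inclusion $\Lambda_\varepsilon\subset\Lambda$, and the flow-hyperbolic structure. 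Your citation of \cite{STW} for a $C^1$ Katok construction is also misplaced: that paper concerns the space of ergodic measures, not the horseshoe theorem; the relevant reference is \cite{lsww}.
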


\begin{proof}
	We only give a sketch here since the proof is essentially contained in~\cite[Proposition 2.9]{lsww}
    (see also~\cite[Theorem 4.1]{PYY}).
	Note that since $\Lambda$ is a singular hyperbolic homoclinic class, the vector field $X$ satisfies the star condition  over $\Lambda$. 
    The existence of a horseshoe $\Lambda_{\varepsilon}\subset\Lambda$ satisfying 
    $h_{\rm top}(\Lambda_{\varepsilon})>h_{\mu}(X)-\varepsilon$ follows directly from~\cite[Proposition 2.9]{lsww}. 
	Moreover, the horseshoe  $\Lambda_{\varepsilon}$ is constructed by shadowing the orbit of a generic point of $\mu$, following the classical arguments of Katok~\cite[Theorem 4.3]{Katok}, hence $\Lambda_\varepsilon$ is contained in the $\varepsilon$-neighborhood of $\supp(\mu)$ and	
	every invariant measure $\nu$  supported on $\Lambda_{\varepsilon}$ is close to $\mu$ in the weak$^*$-topology. Finally, by the variational 
	principle and the fact  that
	$h_{\rm top}(\Lambda_{\varepsilon})>h_{\mu}(X)-\varepsilon$, there exists $\nu_0\in\mathcal{M}_{erg}(\Lambda_\varepsilon)$ satisfying	$h_{\nu_0}(X)>h_{\mu}(X)-\varepsilon$.
\end{proof}

\begin{remark}\label{Rem:conjugancy-of-HS}
	In fact, the horseshoe $\Lambda_{\varepsilon}$ obtained in Lemma~\ref{Lem:Horseshoe-entropy-ergodic} is conjugate (not only semi-conjugate) to the suspension flow of a full shift 
	with continuous roof function. We refer the reader to~\cite[Proposition 2.9]{lsww} for more details and to~\cite[Theorem 5.6]{sgw} for an approach.
\end{remark}


The approximation by horseshoes in the conclusion of Lemma~\ref{Lem:Horseshoe-entropy-ergodic} can actually be extended to arbitrary invariant measures in $\mathcal{M}_{1}(\Lambda)$. More precisely: 

\begin{proposition}\label{Pro:Horseshoe-entropy-invariant}
	Let $X\in\mathscr{X}^1(M)$ and $\Lambda$ be a singular hyperbolic homoclinic class of $X$. 
	If each pair of periodic orbits of $\Lambda$ are homoclinic related, then for every $\mu\in\mathcal{M}_1(\Lambda)$ and $\varepsilon>0$, there exists a horseshoe $\Lambda_{\varepsilon}\subset\Lambda$ and 
	there exists $\nu\in\mathcal{M}_{erg}(\Lambda_{\varepsilon})$ satisfying $$d^*(\mu,\nu)<\varepsilon~~\text{and}~~h_{\nu}(X)>h_{\mu}(X)-\varepsilon.$$
Thus ${\mathcal{M}_1(\Lambda)}$ has the horseshoe approximation property, and so it is entropy-dense.
\end{proposition}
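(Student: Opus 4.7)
The plan is to reduce the horseshoe approximation property for $\mathcal{M}_1(\Lambda)$ to the ergodic case (Lemma~\ref{Lem:Horseshoe-entropy-ergodic}) via four steps: (i) approximate $\mu$ by a finite convex combination of ergodic measures in $\mathcal{M}_0(\Lambda)$, both in $d^*$ and in entropy; (ii) realize each ergodic component by an approximating horseshoe; (iii) amalgamate these horseshoes into a single horseshoe using homoclinic relatedness; and (iv) extract an ergodic measure of large entropy from the amalgamated horseshoe. Throughout, fix an auxiliary parameter $\delta \in (0, \varepsilon/3)$.

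For step (i), the ergodic decomposition theorem gives $\mu = \int \mu_x \, d\mu(x)$, and since $\mu(\sing(\Lambda))=0$ one has $\mu_x \in \mathcal{M}_0(\Lambda)$ for $\mu$-a.e. $x$. By Jacobs' theorem (affinity of the entropy map), $h_\mu(X) = \int h_{\mu_x}(X) \, d\mu(x)$. Partition the space of ergodic measures by entropy strips $S_j = \{\nu : (j-1)\delta \le h_\nu(X) < j\delta\}$ for $j = 1, \dots, \lceil h_{\rm top}(\Lambda)/\delta\rceil+1$, and refine each $S_j$ into Borel pieces of $d^*$-diameter less than $\delta$. Picking one representative $\mu_i \in \mathcal{M}_0(\Lambda)$ in each positive-$\mu$-weight piece, with weight $\lambda_i$ equal to the $\mu$-measure of its preimage under the ergodic decomposition map, yields a finite convex combination $\tilde\mu = \sum_{i=1}^k \lambda_i \mu_i$ with $d^*(\tilde\mu, \mu) < \delta$. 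Comparing $\sum_i \lambda_i h_{\mu_i}(X)$ with $h_\mu(X)$ via the entropy strips (each contributing at most $\delta$ of discrepancy) gives $\sum_i \lambda_i h_{\mu_i}(X) > h_\mu(X) - \delta$.

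For steps (ii)--(iv), apply Lemma~\ref{Lem:Horseshoe-entropy-ergodic} with precision $\delta$ to each $\mu_i$ to produce a horseshoe $\Lambda_i \subset \Lambda$ on which every invariant measure is within $\delta$ of $\mu_i$ in $d^*$ and which supports an ergodic measure $\nu_i \in \mathcal{M}_{erg}(\Lambda_i)$ with $h_{\nu_i}(X) > h_{\mu_i}(X) - \delta$. Since any pair of periodic orbits in $\Lambda$ is homoclinically related, iterative application of Lemma~\ref{Lem:large-horseshoe} embeds $\Lambda_1 \cup \dots \cup \Lambda_k$ into a single horseshoe $\Lambda_\varepsilon \subset \Lambda$. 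The invariant measure $\tilde\nu := \sum_i \lambda_i \nu_i \in \mathcal{M}_{inv}(\Lambda_\varepsilon)$ satisfies $d^*(\tilde\nu, \tilde\mu) \le \sum_i \lambda_i d^*(\nu_i, \mu_i) < \delta$ using the translation invariance of $d^*$, and $h_{\tilde\nu}(X) = \sum_i \lambda_i h_{\nu_i}(X) > h_\mu(X) - 2\delta$ by affinity of entropy. Finally, Proposition~\ref{Pro:horseshoe-entropy-dense} applied to the horseshoe $\Lambda_\varepsilon$ yields an ergodic $\nu \in \mathcal{M}_{erg}(\Lambda_\varepsilon)$ with $d^*(\nu, \tilde\nu) < \delta$ and $h_\nu(X) > h_{\tilde\nu}(X) - \delta$. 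The triangle inequality then gives $d^*(\nu, \mu) < 3\delta < \varepsilon$ and $h_\nu(X) > h_\mu(X) - 3\delta > h_\mu(X) - \varepsilon$, as required.

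The main technical obstacle is step (i): constructing a finite convex combination of ergodic measures in $\mathcal{M}_0(\Lambda)$ that approximates $\mu$ simultaneously in the weak$^*$ topology and in entropy. The $d^*$-approximation alone follows from Lemma~\ref{Lem:periodic measure dense}, but the lower entropy estimate requires the partition-by-entropy-strips argument above, which exploits Jacobs' affinity of metric entropy together with the boundedness of $h$ via $h_{\rm top}(\Lambda) < \infty$. Note that the upper semi-continuity of $h$ on singular hyperbolic sets is only used indirectly (to ensure finiteness of the topological entropy and good measurability); the argument itself is insensitive to the direction of the semi-continuity.
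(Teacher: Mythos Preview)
Your proof is correct and follows essentially the same four-step strategy as the paper's: approximate $\mu$ by a finite convex combination in $\mathcal{M}_0(\Lambda)$ close in both $d^*$ and entropy, apply Lemma~\ref{Lem:Horseshoe-entropy-ergodic} to each component, amalgamate the resulting horseshoes via Lemma~\ref{Lem:large-horseshoe}, and conclude using the entropy-denseness of horseshoes (Proposition~\ref{Pro:horseshoe-entropy-dense}). The only difference is that your step~(i) spells out the entropy-strip partition explicitly, whereas the paper simply invokes ``ergodic decomposition and affinity of the metric entropy'' and leaves the approximation details implicit.
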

\begin{proof}
	Let $\mu\in\mathcal{M}_{1}(\Lambda)$ and $\varepsilon>0$ be fixed. By ergodic decomposition and affinity of the metric entropy, there exist ergodic measures $\omega_1,\omega_2,\cdots,\omega_k\in \mathcal{M}_{0}(\Lambda)$ and real numbers $\alpha_1,\alpha_2,\cdots,\alpha_k\in (0,1)$ with $\sum\limits_{i=1}^k\alpha_i=1$ 	so that the probability $\omega'=\sum\limits_{i=1}^k\alpha_i\omega_i$ satisfies  
	$$
	d^*(\omega',\mu)<\frac{\varepsilon}{3}~~\text{and}~~h_{\omega'}(X)>h_{\mu}(X)-\frac{\varepsilon}{3}.
	$$
	By Lemma~\ref{Lem:Horseshoe-entropy-ergodic}, for each $i\in\{1,2,\cdots,k\}$, there exists a horseshoe $\Lambda_i\subset\Lambda$ such that 
	every $\omega\in\mathcal{M}_{erg}(\Lambda_i)$ satisfies $d^*(\omega_i,\omega)<\frac{\varepsilon}{6}$,
	and there exists $\nu_i\in\mathcal{M}_{erg}(\Lambda_i)$ satisfying 
	$$
	d^*(\omega_i,\nu_i)<\frac{\varepsilon}{3}~~\text{and}~~h_{\nu_i}(X)>h_{\omega_i}(X)-\frac{\varepsilon}{3}.
	$$
	Let $\Lambda_{\varepsilon}$ be a large horseshoe that contains every $\Lambda_i$ for $i\in\{1,2,\cdots,k\}$. Such $\Lambda_{\varepsilon}$ does exist since any two periodic orbits in $\Lambda$ are homoclinically related and any horseshoe must contain (countably many) periodic orbits. Then the probability measure $\nu'=\sum\limits_{i=1}^k\alpha_i\nu_i$ in $\mathcal{M}_{inv}(\Lambda_{\varepsilon})$ satisfies
	$d^*(\nu',\mu)\leq d^*(\nu',\omega')+d^*(\omega',\mu)<\frac{2\varepsilon}{3},$
	and
	$$h_{\nu'}(X)=\sum\limits_{i=1}^k\alpha_i\cdot h_{\nu_i}(X)>\sum\limits_{i=1}^k\alpha_i\cdot h_{\omega_i}(X)-\frac{\varepsilon}{3}>h_{\mu}(X)-\frac{2\varepsilon}{3}.$$
	By Proposition~\ref{Pro:horseshoe-entropy-dense}, we know that $\mathcal{M}_{inv}(\Lambda_{\varepsilon})$ is entropy-dense. Thus, for the invariant measure $\nu'\in \mathcal{M}_{inv}(\Lambda_{\varepsilon})$ and $\varepsilon>0$ above, there exists  $\nu\in\mathcal{M}_{erg}(\Lambda_{\varepsilon})$ satisfying
	$$d^*(\nu',\nu)<\frac{\varepsilon}{3}~~\text{and}~~h_{\nu}(X)>h_{\nu'}(X)-\frac{\varepsilon}{3}.$$
	In consequence,  the ergodic probability $\nu$ supported on $\Lambda_\varepsilon$ 
	satisfies
	$$d^*(\mu,\nu)\leq d^*(\mu,\nu')+d^*(\nu',\nu)<\varepsilon,$$
	and
	$$h_{\nu}(X)>h_{\nu'}(X)-\frac{\varepsilon}{3}>h_{\mu}(X)-\varepsilon.$$
\end{proof}

\begin{proposition}\label{Pro:Horseshoe-approximation-inv}
	Let $X\in\mathscr{X}^1(M)$ and $\Lambda$ be a singular hyperbolic homoclinic class of $X$. 
	Assume each pair of periodic orbits of $\Lambda$ are homoclinic related, and $\mathcal{M}_{inv}(\Lambda)=\overline{\mathcal{M}_1(\Lambda)}$.  
	Then $\mathcal{M}_{inv}(\Lambda)$ has the horseshoe approximation property.
\end{proposition}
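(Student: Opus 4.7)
The plan is to upgrade Proposition~\ref{Pro:Horseshoe-entropy-invariant}, which already provides the horseshoe approximation property on $\mathcal{M}_1(\Lambda)$, to the whole space $\mathcal{M}_{inv}(\Lambda)$. The key observation is that any $\mu\in\mathcal{M}_{inv}(\Lambda)$ can be weak$^*$-approximated by a measure in $\mathcal{M}_1(\Lambda)$ \emph{without losing any entropy}, thanks to the fact that Dirac masses at singularities carry zero metric entropy combined with the affinity of the entropy map.

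Given $\mu\in\mathcal{M}_{inv}(\Lambda)$ and $\varepsilon>0$, I first decompose $\mu$ according to its ergodic decomposition. Since every ergodic measure on $\Lambda$ either lies in $\mathcal{M}_0(\Lambda)$ or is a Dirac mass $\delta_\sigma$ at some $\sigma\in\sing(\Lambda)$ (an ergodic measure giving positive weight to the finite invariant set $\sing(\Lambda)$ must be a Dirac mass at one of its points), I can write
$$
\mu=\alpha\,\mu_1+(1-\alpha)\,\mu_s,
$$
where $\mu_1\in\mathcal{M}_1(\Lambda)$ (when $\alpha>0$) and $\mu_s$ is a convex combination of Dirac masses at singularities. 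By affinity of the metric entropy and $h_{\delta_\sigma}(X)=0$, this gives $h_\mu(X)=\alpha\,h_{\mu_1}(X)$.

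Next, using the hypothesis $\mathcal{M}_{inv}(\Lambda)=\overline{\mathcal{M}_1(\Lambda)}$, I approximate $\mu_s$ in $d^*$ by some $\mu_s'\in\mathcal{M}_1(\Lambda)$, and form $\mu'=\alpha\,\mu_1+(1-\alpha)\,\mu_s'$. By construction $\mu'\in\mathcal{M}_1(\Lambda)$; weak$^*$ continuity of convex combinations lets me ensure $d^*(\mu,\mu')<\varepsilon/2$ by choosing $\mu_s'$ sufficiently close to $\mu_s$; and by affinity of entropy,
$$
h_{\mu'}(X)=\alpha\,h_{\mu_1}(X)+(1-\alpha)\,h_{\mu_s'}(X)\ge \alpha\,h_{\mu_1}(X)=h_\mu(X).
$$
I then apply Proposition~\ref{Pro:Horseshoe-entropy-invariant} to $\mu'$ with tolerance $\varepsilon/2$, obtaining a horseshoe $\Lambda'\subset\Lambda$ and $\nu\in\mathcal{M}_{erg}(\Lambda')$ satisfying $d^*(\mu',\nu)<\varepsilon/2$ and $h_\nu(X)>h_{\mu'}(X)-\varepsilon/2\ge h_\mu(X)-\varepsilon/2$. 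The triangle inequality yields $d^*(\mu,\nu)<\varepsilon$, which is exactly what the horseshoe approximation property requires.

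The substantive conceptual step is the first one: separating the singular part of $\mu$ so that replacing it with an element of $\mathcal{M}_1(\Lambda)$ can only \emph{raise} the entropy, never lower it. Once this is in place, the weak$^*$ density hypothesis $\mathcal{M}_{inv}(\Lambda)=\overline{\mathcal{M}_1(\Lambda)}$ provides the approximation at no entropy cost, and Proposition~\ref{Pro:Horseshoe-entropy-invariant} finishes the job. The boundary cases $\alpha=0$ (where $h_\mu(X)=0$ and the $\mu_1$ term is absent) and $\alpha=1$ (where $\mu\in\mathcal{M}_1(\Lambda)$ already and Proposition~\ref{Pro:Horseshoe-entropy-invariant} applies directly) need no new idea.
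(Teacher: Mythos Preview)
Your proof is correct and follows essentially the same approach as the paper: decompose $\mu$ into its singular and nonsingular parts, replace the singular part by a weak$^*$-nearby element of $\mathcal{M}_1(\Lambda)$ (which can only raise entropy since the singular part has zero entropy), and then invoke Proposition~\ref{Pro:Horseshoe-entropy-invariant}. The only cosmetic differences are that the paper swaps the roles of the two components in the notation and uses the translation invariance of $d^*$ explicitly rather than appealing to weak$^*$ continuity of convex combinations.
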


\begin{proof}
Fix an arbitrary $\mu \in \mathcal{M}_{inv}(\Lambda)=\overline{\mathcal{M}_1(\Lambda)}$. 
Using Proposition~\ref{Pro:Horseshoe-entropy-invariant} it suffices to show that 
$\mu$ is approximated, both in the weak$^*$ topology and entropy, by measures in ${\mathcal{M}_1(\Lambda)}$.

Fix an arbitrary constant $\varepsilon>0$.
If $\mu \in {\mathcal{M}_1(\Lambda)}$
there is nothing to prove. Otherwise,  one can write $\mu=\alpha \mu_1 + (1-\alpha) \mu_2$
for some $0< \alpha \leq 1$ and probabilities $\mu_1,\mu_2 \in \mathcal{M}_{inv}(\Lambda)$ so that $\mu_1(\sing(X))=1$
and $\mu_2(\sing(X))=0$. In other words, $\mu_1$ is supported in the invariant set formed by  singularities and $\mu_2\in \mathcal M_1(\Lambda)$.
By the assumption, there exists a sequence of probabilities $\nu_n \in {\mathcal{M}_1(\Lambda)}$ with 
$d^*(\nu_n,\mu_1)\rightarrow 0$ as $n\rightarrow \infty$. 
Notice that $\mathcal{M}_1(\Lambda)$ is a convex set.
Thus $\alpha \nu_n + (1-\alpha) \mu_2 \in \mathcal{M}_1(\Lambda)$ for each $n\ge 1$.
Moreover, using that $d^*$ is translation invariance and affinity of the entropy we get
\begin{align*}
d^*\big(\alpha \nu_n + (1-\alpha) \mu_2,\mu\big) 
	 = d^*\big(\alpha \nu_n + (1-\alpha) \mu_2,\alpha \mu_1 + (1-\alpha) \mu_2\big) 
	 = d^*\big(\alpha \nu_n,\alpha \mu_1\big) 
\end{align*}
and
$$
h_{\alpha \nu_n + (1-\alpha) \mu_2}(X)
	= \alpha h_{\nu_n}(X) + (1-\alpha) h_{ \mu_2}(X)\ge (1-\alpha) h_{ \mu_2}(X) = h_\mu(X).
$$ 
By taking $n\ge 1$ large, we conclude that the probability $\mu_n\colon=\alpha\nu_n+(1-\alpha)\mu_2\in \mathcal M_1(\Lambda)$ satisfies 
$
	d^*(\mu_n,\mu)<{\varepsilon}~~\text{and}~~h_{\mu_n}(X) \ge h_{\mu}(X).
$
This completes the proof. 
\end{proof}

Recently, S. Crovisier and D. Yang~\cite{CY-Robust-attractor} proved that for $C^1$-open and dense set of vector field $X\in\mathcal{X}^1(M)$, any singular hyperbolic attractor $\Lambda$ is a robustly transitive attractor. Moreover, if $\Lambda$ is non-trivial, then it is a homoclinic class and any two periodic orbits contained in $\Lambda$ are homoclinically related.
On the other hand, the main theorems (Theorem A, B, B') in~\cite{STW}  state that if $\Lambda$ is a singular hyperbolic attractor of $X$ in a residual subset $\mathcal{R}\subset\mathscr{X}^1(M)$, or  $\Lambda$ is a geometric Lorenz attractor of $X$ in a residual subset $\mathcal{R}^r\subset\mathscr{X}^r(M^3)$, then $\mathcal{M}_{inv}(\Lambda)=\overline{\mathcal{M}_{per}(\Lambda)}$, which implies naturally that $\mathcal{M}_{inv}(\Lambda)=\overline{\mathcal{M}_1(\Lambda)}$.
Thus one obtains the following consequence from Proposition~\ref{Pro:Horseshoe-approximation-inv}.

\begin{corollary}\label{Pro:HS-approximation-generic}
The following holds:
	\begin{enumerate}
		\item\label{item:generic-Lorenz-attractor} There exists a residual subset  $\mathcal{R}^r\subset\mathscr{X}^r(M^3)$ where $r\in\mathbb{N}_{\geq 2}\cup\{\infty\}$ such that if $\Lambda$ is a geometric Lorenz attractor of $X\in\mathcal{R}^r $, then $\mathcal{M}_{inv}(\Lambda)=\overline{\mathcal{M}_1(\Lambda)}$ and thus $\mathcal{M}_{inv}(\Lambda)$ has the horseshoe approximation property and is entropy-dense.
		\item\label{item:generic-SH-attractor} There exists a residual subset  $\mathcal{R}\subset\mathscr{X}^1(M)$ such that if $\Lambda$ is a non-trivial singular hyperbolic attractor of $X\in\mathcal{R}$, then $\mathcal{M}_{inv}(\Lambda)=\overline{\mathcal{M}_1(\Lambda)}$ and thus $\mathcal{M}_{inv}(\Lambda)$ has the horseshoe approximation property and is entropy-dense.  
	\end{enumerate}
\end{corollary}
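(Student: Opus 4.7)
The plan is to assemble Proposition~\ref{Pro:Horseshoe-approximation-inv} with two genericity inputs already recorded in the literature. Recall that Proposition~\ref{Pro:Horseshoe-approximation-inv} requires three properties of $\Lambda$: that it be a singular hyperbolic homoclinic class, that each pair of its periodic orbits be homoclinically related, and that $\mathcal{M}_{inv}(\Lambda)=\overline{\mathcal{M}_1(\Lambda)}$. Once these are verified on a residual set of vector fields, the horseshoe approximation property is immediate, and entropy-denseness of $\mathcal{M}_{inv}(\Lambda)$ follows automatically: any ergodic measure supported on a horseshoe $\Lambda'\subset\Lambda$ is an ergodic measure of $(\phi_t^X,\Lambda)$, so the inequality in Definition~\ref{Def:HS-appro-II} upgrades to Definition~\ref{Def:entropy-dense} for free.

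For part~\ref{item:generic-Lorenz-attractor}, I would first apply Proposition~\ref{Pro:lorenz-robust} to put the ``geometric Lorenz attractor with attracting region $U$'' assumption in a $C^r$-open setting and to record that the continuation is automatically a singular hyperbolic homoclinic class whose periodic orbits are pairwise homoclinically related. This secures the first two structural hypotheses of Proposition~\ref{Pro:Horseshoe-approximation-inv}. For the measure-theoretic hypothesis, I would invoke the residual subset $\mathcal R^r\subset\mathscr X^r(M^3)$ furnished by the main theorems of \cite{STW}, on which every geometric Lorenz attractor $\Lambda$ of $X\in\mathcal R^r$ satisfies $\mathcal{M}_{inv}(\Lambda)=\overline{\mathcal{M}_{per}(\Lambda)}$. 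Since periodic orbits of $\Lambda$ are necessarily disjoint from the finite set $\sing(\Lambda)$, one has the inclusions $\mathcal{M}_{per}(\Lambda)\subset\mathcal{M}_1(\Lambda)\subset\mathcal{M}_{inv}(\Lambda)$, so passing to closures yields $\mathcal{M}_{inv}(\Lambda)=\overline{\mathcal{M}_1(\Lambda)}$, as required.

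For part~\ref{item:generic-SH-attractor}, I plan to intersect two residual subsets of $\mathscr X^1(M)$. The first is the $C^1$-open and dense (hence residual) set $\mathcal U$ supplied by the Crovisier--Yang theorem in \cite{CY-Robust-attractor}, on which every non-trivial singular hyperbolic attractor is a homoclinic class whose periodic orbits are pairwise homoclinically related. The second is the residual subset $\mathcal R'\subset\mathscr X^1(M)$ from \cite{STW}, on which every singular hyperbolic attractor $\Lambda$ satisfies $\mathcal{M}_{inv}(\Lambda)=\overline{\mathcal{M}_{per}(\Lambda)}$. Setting $\mathcal R\colon=\mathcal U\cap\mathcal R'$, which remains residual in $\mathscr X^1(M)$, the same inclusion argument as in part~\ref{item:generic-Lorenz-attractor} delivers $\mathcal{M}_{inv}(\Lambda)=\overline{\mathcal{M}_1(\Lambda)}$; together with the structural properties inherited from $\mathcal U$, all three hypotheses of Proposition~\ref{Pro:Horseshoe-approximation-inv} are in place, and the conclusion follows.

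I do not anticipate any real obstacle: the corollary is a bookkeeping assembly of earlier results. The only point that deserves a line of care is that the residual set from \cite{STW} does apply to the continuations produced by Crovisier--Yang and by Proposition~\ref{Pro:lorenz-robust}; this is already built into the formulation of \cite{STW}, whose statements are phrased for all vector fields admitting the relevant singular hyperbolic or Lorenz attractor, so no extra work is needed.
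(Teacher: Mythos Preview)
Your proposal is correct and follows essentially the same route as the paper: invoke Proposition~\ref{Pro:lorenz-robust} (for Lorenz) and \cite{CY-Robust-attractor} (for singular hyperbolic attractors) to secure the homoclinic-class and homoclinic-relatedness hypotheses, invoke \cite{STW} for $\mathcal{M}_{inv}(\Lambda)=\overline{\mathcal{M}_{per}(\Lambda)}\subset\overline{\mathcal{M}_1(\Lambda)}$, and then apply Proposition~\ref{Pro:Horseshoe-approximation-inv}. Your explicit intersection of residual sets and the inclusion chain $\mathcal{M}_{per}(\Lambda)\subset\mathcal{M}_1(\Lambda)\subset\mathcal{M}_{inv}(\Lambda)$ are exactly the bookkeeping steps the paper leaves implicit.
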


\vskip2mm

The following proposition, whose strong conclusion will not be used in full strength in this paper 
guarantees that the entropy of a singular hyperbolic homoclinic class can be approximated by a horseshoe supporting ergodic measures which are dense enough. More precisely:

\begin{proposition}\label{Pro:Horseshoe-approximation}
	Let $X\in\mathscr{X}^1(M)$ and $\Lambda$ be a singular hyperbolic homoclinic class of $X$. 
	If each pair of periodic orbits of $\Lambda$ are homoclinic related, then for every $\mu\in \overline{\mathcal{M}_1(\Lambda)}$ and $\varepsilon>0$, there exist a horseshoe $\Lambda_\varepsilon\subseteq\Lambda$ and  $\nu\in \mathcal{M}_{erg}(\Lambda_\varepsilon)$	so that $h_{\rm top}(\Lambda_\varepsilon)> h_{\rm top}(\Lambda)-\varepsilon$ and $d^*(\nu,\mu)<\varepsilon$.
\end{proposition}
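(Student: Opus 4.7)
The plan is to build two horseshoes separately inside $\Lambda$—one whose topological entropy approximates $h_{\rm top}(\Lambda)$, and another carrying an ergodic measure close to $\mu$ in the weak$^*$ topology—and then amalgamate them into a single horseshoe using the hypothesis that all periodic orbits in $\Lambda$ are homoclinically related.

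First, to produce the high-entropy horseshoe, I would use the variational principle to pick an ergodic $\mu_0\in\mathcal{M}_{erg}(\Lambda)$ with $h_{\mu_0}(X)>h_{\rm top}(\Lambda)-\varepsilon/2$ (the case $h_{\rm top}(\Lambda)=0$ is trivial and handled directly by the measure-approximation step below). Because $\sing(\Lambda)$ is a finite invariant set, ergodicity of $\mu_0$ forces $\mu_0(\sing(\Lambda))\in\{0,1\}$; the value $1$ would make $\mu_0$ a Dirac at a singularity and hence of zero entropy, so $\mu_0\in\mathcal{M}_0(\Lambda)$. Lemma~\ref{Lem:Horseshoe-entropy-ergodic} then yields a horseshoe $\Lambda_1\subset\Lambda$ carrying an ergodic measure $\xi$ with $h_\xi(X)>h_{\mu_0}(X)-\varepsilon/2$; by the variational principle $h_{\rm top}(\Lambda_1)\ge h_\xi(X)>h_{\rm top}(\Lambda)-\varepsilon$.

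Next, to approximate $\mu$, I would use that $\mu\in\overline{\mathcal{M}_1(\Lambda)}$ to choose $\tilde\mu\in\mathcal{M}_1(\Lambda)$ with $d^*(\tilde\mu,\mu)<\varepsilon/2$. Proposition~\ref{Pro:Horseshoe-entropy-invariant} applied to $\tilde\mu$ gives a horseshoe $\Lambda_2\subset\Lambda$ and an ergodic measure $\nu\in\mathcal{M}_{erg}(\Lambda_2)$ with $d^*(\nu,\tilde\mu)<\varepsilon/2$; by the triangle inequality $d^*(\nu,\mu)<\varepsilon$. (The entropy bound provided by that proposition plays no role here.)

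Finally, I would combine $\Lambda_1$ and $\Lambda_2$. Since $\Lambda_1$ and $\Lambda_2$ are horseshoes, each contains hyperbolic periodic orbits; by the standing assumption any two periodic orbits in $\Lambda$ are homoclinically related, so Lemma~\ref{Lem:large-horseshoe} yields a single horseshoe $\Lambda_\varepsilon\subset\Lambda$ with $\Lambda_1\cup\Lambda_2\subset\Lambda_\varepsilon$. Then
\[
h_{\rm top}(\Lambda_\varepsilon)\ge h_{\rm top}(\Lambda_1)>h_{\rm top}(\Lambda)-\varepsilon,
\]
while $\nu\in\mathcal{M}_{erg}(\Lambda_2)\subset\mathcal{M}_{erg}(\Lambda_\varepsilon)$ remains ergodic in the larger horseshoe and still satisfies $d^*(\nu,\mu)<\varepsilon$. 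The only step requiring care is the amalgamation: one must verify that Lemma~\ref{Lem:large-horseshoe} applies in our singular hyperbolic setting, which is exactly why the hypothesis ``every pair of periodic orbits in $\Lambda$ is homoclinically related'' is built into the proposition. All other ingredients (variational principle, Katok-type approximation, ergodic decomposition) are routine once one observes that singularities carry zero entropy.
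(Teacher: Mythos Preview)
Your proof is correct and follows the same two-step strategy as the paper: build one horseshoe approximating the topological entropy, another carrying an ergodic measure close to~$\mu$, and amalgamate them via Lemma~\ref{Lem:large-horseshoe}. The implementation differs slightly: the paper approximates $\mu$ by a \emph{periodic} measure (via Lemma~\ref{Lem:periodic measure dense}) and constructs a universal nested sequence of horseshoes $\{\Lambda_n\}$ absorbing all periodic orbits together with the high-entropy horseshoes $\{\Delta_n\}$, then picks $n$ large; you instead invoke Proposition~\ref{Pro:Horseshoe-entropy-invariant} directly to obtain the measure-approximating horseshoe and combine just two pieces. Your route is more economical for this single statement, while the paper's nested sequence has the incidental advantage of being uniform in $\mu$ and $\varepsilon$ (the same exhaustion works for every target measure), which is convenient when one wants a fixed family of horseshoes later on.
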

\begin{proof}
	In the special case that $\Lambda\cap\sing(X)=\emptyset$ we have that $\Lambda$ hyperbolic (recall Remark~\ref{Rem:singular hyperbolicity}), hence $\Lambda$  itself is a basic set of $X$. Moreover, if this is the case then 
	$\mathcal{M}_{inv}(\Lambda)=\mathcal{M}_{1}(\Lambda)=\overline{\mathcal{M}_{per}(\Lambda)}$ by~\cite[Theorem 1]{Sigmund}.
	Thus $\overline{\mathcal{M}_1(\Lambda)}$ has the strong horseshoe approximation property by Proposition~\ref{Pro:Horseshoe-approximation-inv} and one concludes.

	\smallskip
	It remains to consider the case where $\Lambda\cap\sing(X)\neq\emptyset$.
    We first construct a nested sequence of horseshoes whose entropies approximate to $h_{\rm top}(\Lambda)$. Notice first that every periodic orbit in $\Lambda$ is hyperbolic, hence the non-trivial homoclinic class $\Lambda$ contains countably many periodic orbits which we list as 
	$\{\gamma_n\}_{n\geq 1}$. Moreover, $\Lambda$ being a non-trivial homoclinic class ensures that $h_{\rm top}(\Lambda)>0$. By the variational principle and Lemma~\ref{Lem:Horseshoe-entropy-ergodic}, there is a sequence of horseshoes $\{\Delta_n\}_{n\geq 1}$ contained in $\Lambda$ such that $h_{\rm top}(\Delta_n)>h_{\rm top}(\Lambda)-1/n$. Notice that any two periodic orbits contained in $\Lambda$ have the same stable index and are homoclinically related with each other, 
	and each horseshoe $\Delta_n$ must contain infinitely many periodic orbits for each $n\geq 1$. Thus, inductively, we can construct a nested sequence of transitive horseshoes $\{\Lambda_n\}_{n\geq 1}$ contained in $\Lambda$  as follows:
	\begin{itemize}
		\item Let $\Lambda_1$ be a horseshoe that contains $\gamma_1$ and $\Delta_1$. Such $\Lambda_1$ does exist since $\gamma_1$ is homoclinically related with all periodic orbits contained in $\Delta_1$.
		\item For $n\geq 2$, let $\Lambda_n$ be a  horseshoe that contains $\gamma_n$ and also contains the two horseshoes $\Lambda_{n-1}$ and $\Delta_n$. Such a horseshoe exists by Lemma~\ref{Lem:large-horseshoe}.
	\end{itemize}
    Then we have that $h_{\rm top}(\Lambda_n)\geq h_{\rm top}(\Delta_n)>h_{\rm top}(\Lambda)-1/n$ for every $n\geq 1$.
  
   We claim that that the previous sequence of horseshoes $\{\Lambda_n\}_{n\in\NN}$ satisfies the conclusion.
   Indeed, for any $\varepsilon>0$ and $\mu\in \overline{\mathcal{M}_1(\Lambda)}$, by Lemma~\ref{Lem:periodic measure dense} there exists $n_1\in\mathbb{N}$ such that $d^*(\mu,\nu_{n_1})<\varepsilon$ where $\nu_{n_1}$ is the periodic measure associated to $\gamma_{n_1}$. Take $n_2\in\mathbb{N}$ such that $1/n_2<\varepsilon$ and $n=\max\{n_1,n_2\}$. Therefore $\nu_{n_1}\in \mathcal{M}_{erg}(\Lambda_n)$ and 
   $$h_{\rm top}(\Lambda_n)>h_{\rm top}(\Lambda)-1/n\geq h_{\mu}(X)-1/n>h_{\mu}(X)-\varepsilon.$$
This completes the proof of Proposition~\ref{Pro:Horseshoe-approximation}.
\end{proof}

\section{Multifractal analysis}\label{Section:proof-MA}
In this section, we aim to study the multifractal analysis of singular hyperbolic attractors of $C^1$-generic vector fields and geometric Lorenz attractors of $C^r$-generic vector fields ($r\geq 2$). We prove the following theorem in general case. With the arguments in Section~\ref{Sec:HS-approximation}, one will see that Theorems~\ref{Thm:irregular-Lorenz} \&~\ref{Thm:irregular-SinHyp-attractor} are consequences of Theorem~\ref{Thm:irregular-levelset} below
(cf. Subsection~\ref{sec:proofsAB}). The stra\-tegy is to use the horseshoe approximation property to transfer the difficulty to the des\-cription of suitably chosen horseshoes. 
Once this is accomplished, then one can use Thompson's results to get full topological entropy of  irregular sets~\cite{Thomp} and variational principle of level sets~\cite{Thompson2009}.


\begin{theorem}\label{Thm:irregular-levelset} 
	Let $X\in\mathscr{X}^1(M)$ and $\Lambda$ be a singular hyperbolic homoclinic class of $X$. Assume each pair of periodic orbits contained in $\Lambda$ are homoclinically related and $\mathcal{M}_{inv}(\Lambda)=\overline{\mathcal{M}_1(\Lambda)}$. Given $g\in C(\Lambda,\mathbb{R})$, then either	
	\begin{enumerate}
		\item\label{item:irr-empty} $I_g$ is empty and  $\displaystyle \int g {\rm d}\mu=\displaystyle \int g {\rm d}\nu$  for all $\mu,\nu\in\mathcal{M}_{inv}(\Lambda)$; or
		\item\label{item:irr-residual} $I_{g}$ is residual in $\Lambda$ and $h_{\rm top}(I_{g})=h_{\rm top}(\Lambda)$. 
	\end{enumerate}
	Moreover, for each $a\in\RR$ satisfying 
		$
		\displaystyle\inf_{\mu\in \mathcal{M}_{1}(\Lambda)}\int g {\rm d}\mu<a<\sup_{\mu\in \mathcal{M}_{1}(\Lambda)} \int g {\rm d}\mu,
		$  
		the level set $R_{g}(a)$ is dense in $\Lambda$ and 
		$$h_{\rm top}(R_{g}(a))=\sup\left\{h_\mu(f) \colon \int g {\rm d}\mu=a, \mu\in   \mathcal{M}_{inv}(\Lambda)\right\}.$$ 
Furthermore the set of functions satisfying the second item form an open and dense subset in $C(\Lambda,\mathbb{R})$.
\end{theorem}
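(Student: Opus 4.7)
The plan is to reduce the multifractal analysis on $\Lambda$ to analogous statements on carefully chosen horseshoes, via the horseshoe approximation property (Proposition~\ref{Pro:Horseshoe-approximation-inv}), and then appeal to the classical results of Thompson~\cite{Thomp,Thompson2009} at the horseshoe level. The dichotomy is immediate: if $\int g\,{\rm d}\mu$ is constant over $\mathcal M_{inv}(\Lambda)$, then, because every weak$^*$ accumulation point of the empirical measures is invariant, the Birkhoff averages converge uniformly to the common value and $I_g=\emptyset$. Otherwise I fix $\mu_1,\mu_2\in\mathcal M_{inv}(\Lambda)=\overline{\mathcal M_1(\Lambda)}$ with $\int g\,{\rm d}\mu_1-\int g\,{\rm d}\mu_2>6c$ for some $c>0$.

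The heart of the argument is a \emph{horseshoe lifting lemma}: for every non-empty open set $U\subset\Lambda$ and every $\varepsilon>0$, there exists a horseshoe $\Lambda'\subset\Lambda$ with $U\cap\Lambda'\neq\emptyset$, with $h_{\rm top}(\Lambda')>h_{\rm top}(\Lambda)-\varepsilon$, and supporting two ergodic measures $\tilde\mu_1,\tilde\mu_2$ satisfying $|\int g\,{\rm d}\tilde\mu_1-\int g\,{\rm d}\tilde\mu_2|>2c$. To construct $\Lambda'$, I pick a periodic orbit $\gamma\subset U$ (available because $\Lambda$ is a homoclinic class), approximate each $\mu_i$ by an ergodic measure $\tilde\mu_i$ supported on a small horseshoe $\Lambda_i\subset\Lambda$ with $|\int g\,{\rm d}\tilde\mu_i-\int g\,{\rm d}\mu_i|<c$ via Proposition~\ref{Pro:Horseshoe-entropy-invariant}, and approximate an entropy-maximizing measure on $\Lambda$ to obtain a horseshoe $\Lambda_0\subset\Lambda$ with $h_{\rm top}(\Lambda_0)>h_{\rm top}(\Lambda)-\varepsilon$. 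Since every pair of periodic orbits in $\Lambda$ is homoclinically related, iterated application of Lemma~\ref{Lem:large-horseshoe} produces a single horseshoe $\Lambda'\subset\Lambda$ containing $\gamma,\Lambda_0,\Lambda_1,\Lambda_2$, which enjoys all three required properties.

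Granting this lemma, item (2) of the dichotomy follows. Set $D_k:=\{x\in\Lambda:\exists\,s,t>k,\ |\bar g_s(x)-\bar g_t(x)|>2c\}$, where $\bar g_t(x):=\tfrac{1}{t}\int_0^t g(\phi_\tau(x))\,{\rm d}\tau$. Each $D_k$ is open by continuity of $\bar g_t$ in $x$, and $\bigcap_k D_k\subset I_g$. For every non-empty open $U\subset\Lambda$, the lemma produces a horseshoe $\Lambda'\subset\Lambda$; Thompson's residuality theorem~\cite{Thomp} applied on $\Lambda'$ yields a residual subset of $\Lambda'$ consisting of points whose Birkhoff oscillation is at least $|\int g\,{\rm d}\tilde\mu_1-\int g\,{\rm d}\tilde\mu_2|>2c$, hence contained in every $D_k$, and this set is dense in the non-empty open subset $U\cap\Lambda'$ of $\Lambda'$. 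Thus each $D_k$ is dense in $\Lambda$ and $I_g$ contains a dense $G_\delta$ set. For the entropy, Thompson's theorem gives $h_{\rm top}(I_g\cap\Lambda')=h_{\rm top}(\Lambda')>h_{\rm top}(\Lambda)-\varepsilon$, and letting $\varepsilon\to 0$ yields $h_{\rm top}(I_g)=h_{\rm top}(\Lambda)$. The level-set statement is handled in the same spirit: given $a$ in the open range of $\int g\,{\rm d}\mu$ and $\mu_a\in\mathcal M_{inv}(\Lambda)$ with $\int g\,{\rm d}\mu_a=a$ and entropy close to the supremum in the conclusion, a variant of the lifting lemma provides a horseshoe $\Lambda'$ carrying an ergodic measure close to $\mu_a$ (in both weak-$*$ and entropy) together with ergodic measures whose integrals of $g$ straddle $a$; Thompson's variational principle for level sets~\cite{Thompson2009} applied on $\Lambda'$ gives the lower bound, while the upper bound is standard.

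Finally, the collection of $g$ falling in case (1) is the closed subspace $\{g\in C(\Lambda,\mathbb R):\int g\,{\rm d}\mu=\int g\,{\rm d}\nu\ \text{for all }\mu,\nu\in\mathcal M_{inv}(\Lambda)\}$; its complement is open, and dense whenever $\mathcal M_{inv}(\Lambda)$ contains two distinct measures, which is automatic here because Proposition~\ref{Pro:Horseshoe-entropy-invariant} supplies infinitely many distinct periodic measures supported on horseshoes in any non-trivial singular-hyperbolic homoclinic class with $h_{\rm top}(\Lambda)>0$. Hence the set of $g$ in case (2) is open and dense in $C(\Lambda,\mathbb R)$. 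The main obstacle I expect is precisely the horseshoe lifting lemma: one must simultaneously control the topological entropy of $\Lambda'$, separate the integrals of $g$ on two ergodic measures supported on $\Lambda'$, and force $\Lambda'$ to intersect a prescribed open set $U$, all within a single horseshoe. This is where the homoclinic relatedness of all periodic orbits is essential, as it allows one to amalgamate horseshoes via Lemma~\ref{Lem:large-horseshoe} without sacrificing any of these quantitative features.
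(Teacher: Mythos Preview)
Your approach is essentially the paper's: reduce to horseshoes via the horseshoe approximation property (your ``lifting lemma'' is the content of Proposition~\ref{Pro:irregular} plus Lemma~\ref{Lem:large-horseshoe}) and then invoke Thompson's results on each horseshoe. Two points deserve comment, however.

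First, for the residuality of $I_g$ you and the paper diverge. The paper does not argue via your sets $D_k$; it simply observes that there exist two \emph{periodic} measures with distinct $g$-integrals and that stable manifolds of periodic orbits are dense in $\Lambda$, and then cites \cite[Theorem~A]{CV21} as a black box. Your direct $G_\delta$ argument is fine, but your attribution is wrong: Thompson's paper~\cite{Thomp} proves that $I_g$ has full \emph{entropy}, not that it is residual. The fact you actually need---that on a transitive horseshoe the set of points with Birkhoff oscillation at least $|\int g\,{\rm d}\tilde\mu_1-\int g\,{\rm d}\tilde\mu_2|$ is residual---follows from specification (or from Sigmund's density of periodic measures together with the density of each periodic orbit's stable manifold in the horseshoe), not from~\cite{Thomp}.

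Second, you do not actually address the \emph{density} of $R_g(a)$ in $\Lambda$; you only discuss its entropy. The paper handles density separately: it builds a nested sequence of horseshoes $\Lambda_n\nearrow\Lambda$ (in Hausdorff distance), uses Sigmund's Theorem~\ref{Thm:HS-Sigmund} to find on each $\Lambda_n$ a fully supported invariant measure $\nu_n$ with $\int g\,{\rm d}\nu_n=a$, and then uses the existence of $\nu_n$-generic points (again Sigmund) to get $R_g(a)\cap\Lambda_n$ dense in $\Lambda_n$. Your lifting lemma gives a horseshoe meeting a prescribed open $U$, but you still need to argue that $R_g(a)\cap\Lambda'$ is dense in $\Lambda'$, and for that Sigmund's theorem (not Thompson's) is the right tool. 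Finally, your closing argument that case~(1) functions form a proper closed linear subspace, hence have open dense complement, is correct and slightly cleaner than the paper's explicit perturbation.
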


\begin{remark}\label{Rem:level-set}
	\begin{enumerate}
		\item 	Similarly to Remark~\ref{Rem:Thm-A-B}, for the characterizations of level sets in Theorem~\ref{Thm:irregular-levelset}, when 	
		$$a=\displaystyle\inf_{\mu\in \mathcal{M}_{inv}(\Lambda)}\int g {\rm d}\mu  \text{~~~or~~~}  a=\displaystyle\sup_{\mu\in \mathcal{M}_{inv}(\Lambda)} \int g {\rm d}\mu,$$	
		by~\cite{FH10}, one also has the variational principle that 	
		$$h_{\rm top}(R_{g}(a))=\sup\left\{h_\mu(X) \colon \int g {\rm d}\mu=a, \mu\in   \mathcal{M}_{inv}(\Lambda)\right\}.$$
		
		\item In~\cite[Section 9]{WangTY}, the author also constructed an increasing sequence of basic sets to study the multifractal spectra for Katok maps.
\end{enumerate}
\end{remark}

\subsection{Entropy estimates for irregular sets and level sets}
Given a compact metric space $(K,d)$, for a homeomorphism  $f\colon K\rightarrow K$ and a continuous roof function $\rho\colon K\rightarrow (0,+\infty)$, we consider the suspension flow $(K_{\rho},\F)$ where $\F=(f_t)_t$ as defined in Section~\ref{subsec:suspension}.
Analogous to the discrete case, for a continuous function $g\colon K_{\rho}\rightarrow \R$, we define
$$\underline{g}(x,s)=\liminf_{T\to\infty}\frac1T\int_0^Tg(f_t(x,s)){\rm d}t
\qquad \textrm{and} \qquad
\overline{g}(x,s)=\limsup_{T\to\infty}\frac1T\int_0^Tg(f_t(x,s)){\rm d}t.$$
Define the irregular set 
$$ I^{\rho}_{g}(\F):=\big\{(x,s)\in K_{\rho}\colon \underline{g}(x,s)< \overline{g}(x,s)\big\},$$ 
and for $a\in\R$, define the level set 
$$R^{\rho}_{g}(\F,a)=\big\{(x,s)\in K_{\rho}\colon \underline{g}(x,s)= \overline{g}(x,s)=a\big\}.$$

For a dynamical system $(K,f)$ satisfying the specification property, Thompson proved the following variational principle of level sets \cite[Theorem 4.2]{Thompson2009} and full topological entropy of irregular sets \cite[Theorem 5.1]{Thomp} for the suspension flow $(K_{\rho},\F)$ of $(K,f)$.

\begin{theorem}[Thompson~\cite{Thomp},~\cite{Thompson2009}]\label{Pro-Thompson}
Let $(K,d)$ be a compact metric space, $f\colon K\rightarrow K$ be a homeomorphism satisfying  the specification property and $\rho\colon K\to (0,+\infty)$ be a continuous function. Let $(K_{\rho},\F)$ denote the suspension flow over $(K,f)$ with roof function $\rho$ and let  $g\colon K_{\rho}\rightarrow \R$ be a continuous function. Then:
    \begin{enumerate}
       \item\label{item:levelset-suspension} For any $a\in\R$, 
       $$h_{\rm top}(\F,R^{\rho}_{g}(\F,a))=\sup\left\{h_{\mu}(\F)\colon\mu \in\mathcal{M}_{inv}(\F,K_{\rho})~\textrm{and}~\int g {\rm d}\mu=a\right\}.$$
       \item\label{item:irregular-suspension}  If  
       $\displaystyle\inf_{\mu\in \mathcal{M}_{inv}(\F,K_{\rho})}\int g {\rm d}\mu<\sup_{\mu\in\mathcal{M}_{inv}(\F,X_{\rho})}\int g {\rm d}\mu$
       then
       $$h_{\rm top}(\F,I^{\rho}_{g}(\F))=h_{\rm top}(\F).$$
    \end{enumerate}
\end{theorem}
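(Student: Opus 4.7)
I would prove Theorem~\ref{Thm:irregular-levelset} by transferring the multifractal conclusions of Thompson's Theorem~\ref{Pro-Thompson} from suspension flows of transitive subshifts of finite type up to the singular-hyperbolic homoclinic class $\Lambda$, using the horseshoe approximation property (Propositions~\ref{Pro:Horseshoe-approximation-inv} and~\ref{Pro:Horseshoe-approximation}), the amalgamation Lemma~\ref{Lem:large-horseshoe}, and the finite-to-one entropy-preserving semiconjugacy of Theorem~\ref{Thm:HS-semiconjugate-suspension}. Throughout, the two working hypotheses (all periodic orbits homoclinically related; $\mathcal{M}_{inv}(\Lambda)=\overline{\mathcal{M}_1(\Lambda)}$) are exactly what is needed to glue horseshoes and to approximate any invariant measure by a horseshoe-supported ergodic one.

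\textbf{Dichotomy and full entropy of $I_g$.} First, if $\mu\mapsto \int g\,{\rm d}\mu$ is constant $\equiv c$ on $\mathcal{M}_{inv}(\Lambda)$, then $I_g=\emptyset$: any weak$^*$-accumulation point of the empirical measures $\cE_T(x)$ of $x\in\Lambda$ is $\Phi$-invariant and integrates $g$ to $c$, forcing the Birkhoff average at $x$ to exist and equal $c$. In the opposite case fix $\mu_\pm\in \mathcal{M}_1(\Lambda)$ with $\int g\,{\rm d}\mu_-<\int g\,{\rm d}\mu_+$; I would construct, for each $n\in\N$, a horseshoe $\Lambda_n\subset \Lambda$ with $h_{\rm top}(\Lambda_n)>h_{\rm top}(\Lambda)-1/n$ on which the restriction of $g$ has strictly varying average. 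Concretely, Proposition~\ref{Pro:Horseshoe-approximation} yields a horseshoe $\Lambda_n^{0}$ with near-maximal entropy, Proposition~\ref{Pro:Horseshoe-entropy-invariant} yields horseshoes $H_\pm$ whose invariant measures integrate $g$ close to $\int g\,{\rm d}\mu_\pm$, and Lemma~\ref{Lem:large-horseshoe} amalgamates $\Lambda_n^{0},H_+,H_-$ into a single horseshoe $\Lambda_n$ (using homoclinic relatedness of all periodic orbits). By Theorem~\ref{Thm:HS-semiconjugate-suspension} there is a finite-to-one flow semiconjugacy $\pi_n\colon (\Sigma_\rho^{(n)},\F_n)\to(\Lambda_n,\Phi)$ over a transitive SFT, and Theorem~\ref{Pro-Thompson}(\ref{item:irregular-suspension}) applied to $\widetilde{g}:=g\circ\pi_n$ gives $h_{\rm top}(I^\rho_{\widetilde g}(\F_n))=h_{\rm top}(\F_n)=h_{\rm top}(\Lambda_n)$. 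Since $\pi_n$ is a flow semiconjugacy, $\pi_n(I^\rho_{\widetilde g}(\F_n))\subset I_g$, and the finite-to-one Carath\'eodory-entropy invariance extends from compact invariant sets (Theorem~\ref{Thm:HS-semiconjugate-suspension}(2)) to arbitrary invariant subsets by standard Bowen-type arguments, yielding $h_{\rm top}(I_g)\ge h_{\rm top}(\Lambda_n)>h_{\rm top}(\Lambda)-1/n$. Letting $n\to\infty$ and combining with the trivial upper bound gives $h_{\rm top}(I_g)=h_{\rm top}(\Lambda)$.

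\textbf{Residuality of $I_g$; density and entropy of $R_g(a)$.} For residuality I would write $I_g\supseteq \bigcap_{\alpha<\beta}(U_\alpha\cap V^\beta)$ over rationals $\alpha<\beta$ in the open integral range, where $U_\alpha$ and $V^\beta$ are the $G_\delta$ sets of points whose empirical averages fall below $\alpha$ (resp.\ exceed $\beta$) infinitely often. Given any open $W\subset \Lambda$ I would enlarge the horseshoe $\Lambda_n$ above (again via Lemma~\ref{Lem:large-horseshoe}) to contain a periodic orbit meeting $W$, and use the specification of the SFT base to build a trajectory shadowed into $W$ whose empirical averages oscillate between neighbourhoods of $\int g\,{\rm d}\mu_\pm$, giving density. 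For the variational principle on $R_g(a)$, given $\mu$ with $\int g\,{\rm d}\mu=a$ and $h_\mu$ close to the supremum, Proposition~\ref{Pro:Horseshoe-approximation-inv} furnishes horseshoes $\Lambda_n$ and ergodic $\nu_n\in \mathcal{M}_{erg}(\Lambda_n)$ with $\int g\,{\rm d}\nu_n\to a$ and $h_{\nu_n}\to h_\mu$; arranging that $a$ lies strictly inside the range of $\int g\,{\rm d}\cdot$ on $\mathcal{M}_{inv}(\Lambda_n)$ (again by amalgamation with horseshoes approximating $\mu_\pm$), Theorem~\ref{Pro-Thompson}(\ref{item:levelset-suspension}) applied in the suspension model gives $h_{\rm top}(R_g(a)\cap \Lambda_n)\ge h_{\nu_n}-o(1)$, so the lower bound $h_{\rm top}(R_g(a))\ge \sup\{h_\eta:\int g\,{\rm d}\eta=a\}$ follows by sending $n\to\infty$. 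Density of $R_g(a)$ follows by the same orbit-shadowing scheme, tuned so that the built average equals $a$ rather than oscillating. The matching upper bound uses that any $\Phi$-invariant probability $\eta$ with $\eta(R_g(a))>0$ has all its ergodic components integrating $g$ to $a$ (by Birkhoff), combined with a Bowen/Pfister--Sullivan-type variational principle for invariant saturated subsets.

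\textbf{Open-and-dense claim; main obstacles.} The second alternative holds if and only if some two invariant measures have different $g$-integrals, an obviously $C^{0}$-open condition; density in $C(\Lambda,\RR)$ follows because any continuous observable can be perturbed by a small multiple of a function separating two ergodic measures on $\Lambda$ (which exist since $h_{\rm top}(\Lambda)>0$). I expect the two main obstacles to be: (i) the upper bound $h_{\rm top}(R_g(a))\le \sup\{h_\mu:\int g\,{\rm d}\mu=a\}$, where Thompson's covering-argument on horseshoes does not transfer directly to the non-compact, non-specifying ambient set $\Lambda$, so one must carefully exhaust $R_g(a)$ by its intersections with the approximating horseshoes $\Lambda_n$ and patch the contribution of singularities (which carry zero entropy); and (ii) promoting the ``local'' density witnessed inside a single horseshoe to genuine density in $\Lambda$, which requires a uniform shadowing-and-amalgamation scheme across arbitrary open subsets of $\Lambda$, made possible by the density of periodic orbits in the non-trivial homoclinic class and the homoclinic-relation hypothesis.
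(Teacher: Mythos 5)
Your proposal does not address the statement you were asked to prove. The statement in question is Theorem~\ref{Pro-Thompson}, which is Thompson's variational principle for level sets and full-entropy result for irregular sets of suspension flows over a base map with the specification property — a result the paper cites from \cite{Thomp} and \cite{Thompson2009} and does not reprove. What you have written instead is a (reasonably structured) sketch of the proof of Theorem~\ref{Thm:irregular-levelset}, the paper's main criterion for singular hyperbolic homoclinic classes, and in fact you explicitly invoke Theorem~\ref{Pro-Thompson} as a black box in several places. Using the theorem you were supposed to establish as a lemma in the middle of your argument is circular with respect to the assigned task.

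To actually prove Theorem~\ref{Pro-Thompson} you would need an argument entirely internal to suspension flows over a specification base: roughly, one reduces statements about the flow time-average $\frac1T\int_0^T g(f_t(\cdot))\,{\rm d}t$ to statements about the Birkhoff averages of the induced function $\varphi_g(x)=\int_0^{\rho(x)}g(x,t)\,{\rm d}t$ and the roof $\rho$ under the base map $f$, uses Abramov's formula to convert between base entropy and flow entropy, and then applies the specification property of $f$ to build the necessary orbit-gluing/Moran-type constructions producing subsets of the level set (or irregular set) of high Carath\'eodory entropy; the upper bounds come from a Bowen-type covering estimate of non-compact level sets. Thompson's two papers carry this out (\cite{Thompson2009} for the level-set variational principle and \cite{Thomp} for the irregular set), handling the technical issue that the flow's Bowen balls and the base's Bowen balls must be compared via the Bowen--Walters metric. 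None of this appears in your proposal. If what you intended was to prove Theorem~\ref{Thm:irregular-levelset}, then your strategy is broadly aligned with the paper's (horseshoe approximation, amalgamation, semiconjugacy to suspensions of SFTs, then Thompson's theorem), but that is a different statement and a different comparison.

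Separately, two small points in your sketch of Theorem~\ref{Thm:irregular-levelset} diverge from the paper. For residuality of $I_g$ the paper does not build an explicit $G_\delta$ intersection by hand; it observes that homoclinic relatedness makes the stable manifold of any periodic orbit dense in $\Lambda$, and then invokes \cite[Theorem~A]{CV21} to conclude residuality directly. For the upper bound on $h_{\rm top}(R_g(a))$ the paper does not exhaust $R_g(a)$ by horseshoes; it shows $R_g(a)\subset QR(b)$ and applies Bowen's entropy estimate \cite[Theorem~2]{Bowen-entropy} to the time-one map, which sidesteps the patching issues you flag.
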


\begin{remark}\label{Rem:suspension-SFT-Thompson}
	Note that a suspension flow over a transitive subshift of finite type (SFT) is topologically conjugate to a suspension flow over a topologically mixing SFT, and a topologically mixing SFT satisfies the specification property, thus the conclusions of Theorem~\ref{Pro-Thompson} hold for suspension flows over a transitive SFT.
\end{remark}

As a corollary, we have the following conclusion for horseshoes of $C^1$ vector fields, which complement previous results on the multifractal analysis of hyperbolic flows \cite{BS00b,CV21,PeSa}. Recall that for an invariant compact set $\Lambda$ of $X\in\mathcal{X}^1(M)$ and for a continuous function $g\colon\Lambda\rightarrow \mathbb{R}$, the $g$-irregular set is
$$
I_{g}:=\left\{x\in\Lambda \colon \lim_{T\rightarrow\infty}\frac{1}{T}\int_{0}^{T}g(\phi_t(x)) \,{\rm d}t \text{ does not exist}\right\}
$$ 
and, for each $a\in \mathbb{R}$, the $g$-level set is 
$$
R_{g}(a):=\left\{x\in\Lambda \colon \lim_{T\rightarrow\infty}\frac{1}{T}\int_{0}^{T}g(\phi_t(x)) {\rm d}t =a\right\}.
$$ 

\begin{corollary}\label{Cor:Thompson-HS}
	Let $\Lambda$ be a horseshoe of $X\in\mathcal{X}^1(M)$ and $g\colon\Lambda\rightarrow \mathbb{R}$ be a continuous function. The following properties hold:
		\begin{enumerate}
		\item\label{item:levelset-HS} For any $a\in\R$, 
		$$h_{\rm top}(R_{g}(a))=\sup\left\{h_{\mu}(X)\colon\mu \in \mathcal{M}_{inv}(\Lambda)~\textrm{and}~\int g {\rm d}\mu=a\right\}.$$
		\item\label{item:irregular-HS}  If   
		$\displaystyle\inf_{\mu\in \mathcal{M}_{inv}(\Lambda)}\int g {\rm d}\mu<\sup_{\mu\in\mathcal{M}_{inv}(\Lambda)}\int g {\rm d}\mu$, 
		then
		$h_{\rm top}(I_{g})=h_{\rm top}(\Lambda).$
	\end{enumerate}
\end{corollary}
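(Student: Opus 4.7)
The plan is to reduce the statement to Thompson's result (Theorem~\ref{Pro-Thompson}) applied to a symbolic model of the horseshoe, and then transfer the conclusions via Bowen's semiconjugacy. The first step is to invoke Theorem~\ref{Thm:HS-semiconjugate-suspension}: the horseshoe $\Lambda$ admits a finite-to-one continuous semiconjugacy $\pi\colon \Sigma_\rho \to \Lambda$ from a suspension flow $\F=(f_t)_t$ over a transitive subshift of finite type $(\Sigma,\sigma)$ with continuous roof function $\rho$, intertwining $\F$ and $\Phi$. I would pull back the observable by setting $\tilde g = g \circ \pi \in C(\Sigma_\rho,\mathbb{R})$.

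The second step is to observe that, because $g(\phi_t(\pi(x,s))) = \tilde g(f_t(x,s))$ for all $(x,s)\in \Sigma_\rho$ and all $t\ge0$, the Birkhoff averages along $(x,s)$ coincide with those along $\pi(x,s)$. This yields the set-theoretic equalities $\pi\big(I^{\rho}_{\tilde g}(\F)\big)=I_g$ and $\pi\big(R^{\rho}_{\tilde g}(\F,a)\big)=R_g(a)$ for every $a\in\mathbb R$. In addition, by item (1) of Theorem~\ref{Thm:HS-semiconjugate-suspension}, the pushforward $\mu\mapsto \pi_*\mu$ is a bijection from $\mathcal M_{inv}(\F,\Sigma_\rho)$ onto $\mathcal M_{inv}(\Lambda)$ which preserves the metric entropy and satisfies $\int \tilde g\,{\rm d}\mu = \int g\,{\rm d}(\pi_*\mu)$, so the variational problems in both sides of the conclusion of Corollary~\ref{Cor:Thompson-HS} can be carried out indifferently over $\mathcal M_{inv}(\Sigma_\rho)$ or over $\mathcal M_{inv}(\Lambda)$. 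Now I would apply Theorem~\ref{Pro-Thompson} together with Remark~\ref{Rem:suspension-SFT-Thompson} at the symbolic level, which gives the variational formula for $h_{\rm top}(\F,R^{\rho}_{\tilde g}(\F,a))$ and, when the infimum and supremum of $\int \tilde g\,{\rm d}\mu$ over $\mathcal M_{inv}(\Sigma_\rho)$ differ, also $h_{\rm top}(\F,I^{\rho}_{\tilde g}(\F))=h_{\rm top}(\F)$.

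The last step transfers these entropy identities back to $\Lambda$. Here I would use that the Carathéodory entropy of \emph{arbitrary} subsets is preserved by the finite-to-one continuous semiconjugacy $\pi$: namely, $h_{\rm top}(\F,Z)=h_{\rm top}(X,\pi(Z))$ for every $Z\subset \Sigma_\rho$. This is an easy strengthening of item (2) of Theorem~\ref{Thm:HS-semiconjugate-suspension}; it follows by comparing Bowen covers using the uniform continuity of $\pi$ (so $(t,\eps)$-Bowen balls in $\Sigma_\rho$ map inside $(t,\delta(\eps))$-Bowen balls in $\Lambda$) and, in the converse direction, that each point of $\Lambda$ has at most uniformly boundedly many preimages (a standing fact from the Bowen construction), so the preimage of a Bowen ball in $\Lambda$ is covered by boundedly many Bowen balls in $\Sigma_\rho$ at the same scale. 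The main technical obstacle is precisely this entropy transfer for \emph{non-compact} subsets like $I_g$ and $R_g(a)$; once it is established the Carathéodory sums on both sides differ by a bounded multiplicative constant, and taking $\eps\to 0$ gives the desired identity. Combined with the previous steps, this yields both items of the corollary.
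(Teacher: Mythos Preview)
Your proposal is correct and follows essentially the same route as the paper: invoke Bowen's semiconjugacy from Theorem~\ref{Thm:HS-semiconjugate-suspension}, pull back $g$ to $\tilde g=g\circ\pi$, identify $\pi(I^\rho_{\tilde g}(\F))=I_g$ and $\pi(R^\rho_{\tilde g}(\F,a))=R_g(a)$, match the variational sides via the entropy-preserving bijection of invariant measures, and apply Theorem~\ref{Pro-Thompson} (with Remark~\ref{Rem:suspension-SFT-Thompson}) at the symbolic level.

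The only difference is one of explicitness. The paper's proof is terse and simply asserts that the corollary follows from Theorems~\ref{Thm:HS-semiconjugate-suspension} and~\ref{Pro-Thompson}, implicitly taking for granted that $\pi$ preserves the Carath\'eodory entropy of the (generally non-compact) sets $I_g$ and $R_g(a)$, even though item~(2) of Theorem~\ref{Thm:HS-semiconjugate-suspension} is stated only for compact invariant sets. You correctly flag this as the one nontrivial transfer step and supply the standard argument (uniform continuity of $\pi$ for one inequality, uniformly bounded fibers for the other). So your write-up is in fact more careful than the paper's on this point, but the underlying strategy is identical.
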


\begin{proof}
	By Theorem~\ref{Thm:HS-semiconjugate-suspension}, there exists a suspension flow $(\Sigma_\rho,\F)$ over a transitive SFT $(\Sigma,\sigma)$ such that $(\Lambda,\Phi)$ is semi-conjugate to $(\Sigma_\rho,\F)$, where $\rho\colon \Sigma\rightarrow \mathbb{R}^+$ is a continuous function, $\Phi=(\phi_t)_t$ is the flow generated by $X$ and $\F=(\sigma_t)_t$ is the suspension flow. That is to say, there exists a finite-to-one continuous map $\pi\colon \Sigma_\rho\rightarrow \Lambda$ satisfying that $\pi\circ \sigma_t=\phi_t\circ\pi$ for any $t\in\mathbb{R}$. In particular $\pi$ preserves entropy.
	As $g\in C(\Lambda, \mathbb{R})$, then $\hat g=g\circ \pi\in C(\Sigma_\rho,\mathbb{R})$. Moreover, it is easy to check that 
	$\pi(R_{\hat g}^\rho(\F,a))=R_g(a)$ for each $a\in\mathbb{R}$, and $\pi(I_{\hat g}^\rho(\F))=I_g$. Thus Corollary~\ref{Cor:Thompson-HS} is a consequence 
	of Theorems~\ref{Thm:HS-semiconjugate-suspension} and Remark~\ref{Rem:suspension-SFT-Thompson}.
\end{proof}


Now we apply Theorem~\ref{Pro-Thompson} and Corollary~\ref{Cor:Thompson-HS} to singular hyperbolic homoclinic classes of $C^1$ vector fields.
 
\begin{proposition}\label{Pro:irregular} 
	Let $\Lambda$ be a singular hyperbolic homoclinic class of $X\in\mathcal{X}^1(M)$ such that each pair of periodic orbits are homoclinically related and $g\colon\Lambda\rightarrow \mathbb{R}$ be a continuous function.
	Assume that $\mathcal{M}\subseteq \mathcal{M}_{inv}(\Lambda)$ is a convex subset  satisfying the horseshoe approximation property and  
	$$\displaystyle\inf_{\mu\in\mathcal{M}}\int g {\rm d}\mu< \sup_{\mu\in\mathcal{M}}\int g {\rm d}\mu.$$  
	Then the following properties are satisfied.
	\begin{enumerate}
	\item\label{item:irregular-SH-Hclass} 
	The topological entropy of the $g$-irregular set $I_{g}$   satisfies
    $$h_{\rm top}(I_g)\geq h_{ \mathcal{M}}(X)\colon=\sup\left\{h_\mu(X)\colon  \mu\in   \mathcal{M}\right\}.$$ 
    \item\label{item:levelset-SH-Hclass} 
    For any $\displaystyle a\in\left(\inf_{\mu\in\mathcal{M}}\int g {\rm d}\mu, \sup_{\mu\in\mathcal{M}}\int g {\rm d}\mu\right)$, the  topological entropy of the level set $R_{g}(a)$  satisfies  
    $$h_{\rm top}(R_g(a))\geq h_{\mathcal{M}} (a)\colon=\sup\left\{h_\mu(X)\colon \mu\in   \mathcal{M} \text{~and ~}\int g {\rm d}\mu=a \right\}.$$ 
    \end{enumerate}
\end{proposition}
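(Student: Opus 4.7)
The plan is to use the horseshoe approximation property to produce a single large horseshoe $\Lambda^{*}\subset\Lambda$ on which the hypotheses of Corollary~\ref{Cor:Thompson-HS} are satisfied, and then transfer the resulting entropy lower bounds to the entire homoclinic class $\Lambda$ by monotonicity of topological entropy. Fix $\varepsilon>0$. For item~\ref{item:irregular-SH-Hclass}, I would choose $\mu\in\mathcal{M}$ with $h_{\mu}(X)>h_{\mathcal{M}}(X)-\varepsilon$ together with auxiliary measures $\mu_{1},\mu_{2}\in\mathcal{M}$ witnessing $\int g\,{\rm d}\mu_{1}<\int g\,{\rm d}\mu_{2}$. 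For item~\ref{item:levelset-SH-Hclass}, I would choose $\mu\in\mathcal{M}$ with $\int g\,{\rm d}\mu=a$ and $h_{\mu}(X)>h_{\mathcal{M}}(a)-\varepsilon$, together with $\mu_{1},\mu_{2}\in\mathcal{M}$ satisfying $\int g\,{\rm d}\mu_{1}<a<\int g\,{\rm d}\mu_{2}$.

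Applying the horseshoe approximation property of $\mathcal{M}$ to each of these at most three measures produces horseshoes $\Lambda_{*},\Lambda_{1},\Lambda_{2}\subset\Lambda$ and ergodic measures $\nu_{*}\in\mathcal{M}_{erg}(\Lambda_{*})$, $\nu_{i}\in\mathcal{M}_{erg}(\Lambda_{i})$ with $d^{*}(\nu_{j},\mu_{j})<\delta$ and $h_{\nu_{j}}(X)>h_{\mu_{j}}(X)-\delta$, where $\delta=\delta(\varepsilon,g)$ is a small auxiliary parameter. For $\delta$ small enough, weak$^{*}$-closeness implies that $\int g\,{\rm d}\nu_{j}$ is arbitrarily close to $\int g\,{\rm d}\mu_{j}$, so in particular $\int g\,{\rm d}\nu_{1}\neq\int g\,{\rm d}\nu_{2}$ in item~\ref{item:irregular-SH-Hclass} and $\int g\,{\rm d}\nu_{1}<a<\int g\,{\rm d}\nu_{2}$ in item~\ref{item:levelset-SH-Hclass}. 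Because all periodic orbits of $\Lambda$ are pairwise homoclinically related and each horseshoe contains infinitely many periodic orbits, iterating Lemma~\ref{Lem:large-horseshoe} yields a single horseshoe $\Lambda^{*}\subset\Lambda$ containing $\Lambda_{*},\Lambda_{1},\Lambda_{2}$. Thus $\nu_{*},\nu_{1},\nu_{2}\in\mathcal{M}_{inv}(\Lambda^{*})$ and in both cases
$$\inf_{\omega\in\mathcal{M}_{inv}(\Lambda^{*})}\int g\,{\rm d}\omega<\sup_{\omega\in\mathcal{M}_{inv}(\Lambda^{*})}\int g\,{\rm d}\omega,$$
with the open interval thereby determined containing $a$ in the setting of item~\ref{item:levelset-SH-Hclass}.

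For item~\ref{item:irregular-SH-Hclass}, I invoke Corollary~\ref{Cor:Thompson-HS}(\ref{item:irregular-HS}) directly on the horseshoe $\Lambda^{*}$ to conclude $h_{\rm top}(I_{g}\cap\Lambda^{*})=h_{\rm top}(\Lambda^{*})\geq h_{\nu_{*}}(X)>h_{\mu}(X)-\delta$; observe that the irregular set for the flow restricted to $\Lambda^{*}$ coincides with $I_{g}\cap\Lambda^{*}$ since $\Lambda^{*}$ is flow-invariant. Monotonicity of topological entropy then gives $h_{\rm top}(I_{g})\geq h_{\rm top}(I_{g}\cap\Lambda^{*})>h_{\mathcal{M}}(X)-\varepsilon-\delta$, and sending $\varepsilon,\delta\to 0$ yields the bound.

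For item~\ref{item:levelset-SH-Hclass}, the more delicate point is that $\int g\,{\rm d}\nu_{*}$ may fail to equal $a$. To remedy this, I pick $i\in\{1,2\}$ so that $\int g\,{\rm d}\nu_{i}$ lies on the side of $a$ opposite to $\int g\,{\rm d}\nu_{*}$, and consider the convex combination $\omega=t\nu_{*}+(1-t)\nu_{i}$. By the intermediate value theorem there is a unique $t\in[0,1]$ with $\int g\,{\rm d}\omega=a$; since $|\int g\,{\rm d}\nu_{*}-a|=O(\delta)$ while $|\int g\,{\rm d}\nu_{i}-a|$ is bounded below by a positive constant depending only on the choice of $\mu_{1},\mu_{2}$, one gets $1-t=O(\delta)$. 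Affinity of entropy then yields $h_{\omega}(X)=t\,h_{\nu_{*}}(X)+(1-t)h_{\nu_{i}}(X)\geq h_{\nu_{*}}(X)-(1-t)h_{\rm top}(\Lambda)>h_{\mathcal{M}}(a)-C\varepsilon$ for a constant $C=C(g,\Lambda)$, once $\delta$ is chosen small enough. Since $\omega\in\mathcal{M}_{inv}(\Lambda^{*})$ satisfies $\int g\,{\rm d}\omega=a$, applying Corollary~\ref{Cor:Thompson-HS}(\ref{item:levelset-HS}) to $\Lambda^{*}$ gives $h_{\rm top}(R_{g}(a))\geq h_{\rm top}(R_{g}(a)\cap\Lambda^{*})\geq h_{\omega}(X)>h_{\mathcal{M}}(a)-C\varepsilon$, and $\varepsilon\to 0$ finishes the proof. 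The main obstacle is precisely this convex-combination correction in item~\ref{item:levelset-SH-Hclass}: one must simultaneously control the weak$^{*}$-approximation of $\mu$ by $\nu_{*}$, the entropy loss incurred when mixing with $\nu_{i}$, and the requirement that the exact level $a$ be realised \emph{inside} $\mathcal{M}_{inv}(\Lambda^{*})$, which is the reason the horseshoe $\Lambda^{*}$ was constructed from the outset to enclose both $\Lambda_{1}$ and $\Lambda_{2}$.
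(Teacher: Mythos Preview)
Your proposal is correct and follows essentially the same approach as the paper: choose measures in $\mathcal{M}$ witnessing near-extremal entropy and integrals on both sides of $a$, apply the horseshoe approximation property to each, merge the resulting horseshoes into a single horseshoe via Lemma~\ref{Lem:large-horseshoe}, and then invoke Corollary~\ref{Cor:Thompson-HS}; for item~\ref{item:levelset-SH-Hclass} both you and the paper correct the integral of the approximating measure back to the exact value $a$ by a convex combination with one of the flanking measures and control the entropy loss via affinity. The only cosmetic difference is that the paper indexes everything by $n\in\mathbb{N}$ and computes the convex-combination coefficient explicitly, whereas you phrase the same argument with $\varepsilon,\delta$ and an $O(\delta)$ estimate for $1-t$; your use of ``$|\int g\,{\rm d}\nu_{*}-a|=O(\delta)$'' is a mild abuse (weak$^*$-closeness in $d^*$ gives smallness rather than a linear rate), but since you only need $1-t\to 0$ as $\delta\to 0$ this does not affect the argument.
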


\begin{remark*}
It will be clear from the proof that for establishing item (2) above one uses exclusively the horseshoe approximation property assumption.
\end{remark*}

\begin{proof} 
	Denote by $\displaystyle\underline{a}=\inf_{\mu\in\mathcal{M}}\int g {\rm d}\mu \text{~and~} \overline{a}=\sup_{\mu\in\mathcal{M}}\int g {\rm d}\mu$ for simplicity.
	Fix $a\in(\underline{a},\overline{a})$.
	Since $\mathcal{M}$ is convex, one has that 
	$\left\{ \mu\in   \mathcal{M}  \colon \int g {\rm d}\mu=a \right\}\neq\emptyset.$
	For each $n\in\mathbb{N}$, take $\mu_{n},\nu_{n} \in \mathcal{M} $ so that
	\begin{itemize}
		\item $\displaystyle\int g {\rm d}\mu_{n}=a \text{~and~} h_{\mu_{n}}(X)> h_{ \mathcal{M}}(a) - \frac1n;$
  	    \item $h_{\nu_{n}}(X)> h_{ \mathcal{M}}(X) - \frac1n.$ 
	\end{itemize}

    We give the following claim first.
    
    \begin{claim}\label{Claim:horseshoe}
    	   There exists a horseshoe  $\Lambda_n\subseteq \Lambda$ such that 
    \begin{itemize}
    	\item[(a)] $\displaystyle\inf_{\mu\in \mathcal{M}_{inv}(\Lambda_n)}\int g {\rm d}\mu<\frac{\underline a+a}{2}<a<\frac{a+\overline a}{2}< \sup_{\mu\in\mathcal{M}_{inv}(\Lambda_n)}\int g {\rm d}\mu$;	
    	\item[(b)] there exist $\mu_n',\nu_n'\in \mathcal{M}_{inv}(\Lambda_n)$ such that \\
    	$\displaystyle(b.1)~ \left|\int g {\rm d}\mu_n-\int g {\rm d}\mu_n'\right|<\frac1n \text{~and~} h_{\mu_n'}(X)>h_{\mu_n}(X)-\frac1n>h_{ \mathcal{M}}(a) - \frac2n,$\\
    	$\displaystyle (b.2)~ h_{\nu_n'}(X)>h_{\nu_n}(X)-\frac1n>h_{ \mathcal{M}}(X) - \frac2n.$ 	
    \end{itemize}
    \end{claim}
    \begin{proof}[Proof of Claim~\ref{Claim:horseshoe}]
       The proof is by applying the fact that $\mathcal{M}$  satisfies the horseshoe approximation property twice and by Lemma~\ref{Lem:large-horseshoe}. We give a short explanation.
       First, by the definition of $\underline a$ and $\overline a$, there exist $\eta_1,\eta_2\in\mathcal{M}$ such that $\displaystyle\int g {\rm d}\eta_1<\frac{\underline a+a}{2}$ and $\displaystyle\int g {\rm d}\eta_2>\frac{\overline a+a}{2}$. Since $\mathcal{M}$  satisfies the horseshoe approximation property, there exist two horseshoes $\Delta_1,\Delta_2$ such that 
       $$\displaystyle\inf_{\mu\in \mathcal{M}_{inv}(\Delta_1)}\int g {\rm d}\mu<\frac{\underline a+a}{2}<a<\frac{a+\overline a}{2}< \sup_{\mu\in\mathcal{M}_{inv}(\Delta_2)}\int g {\rm d}\mu.$$
       On the other hand, 
       by the horseshoe approximation property again, there exist two horseshoes $\Delta_n^1,\Delta_n^2$ and $\mu_n'\in \mathcal{M}_{inv}(\Delta^1_n), \nu_n'\in \mathcal{M}_{inv}(\Delta^2_n)$ such that Items $(b.1)$ and $(b.2)$ are satisfied for $\mu_n'$ and $\nu_n'$. 
       Then by Lemma~\ref{Lem:large-horseshoe}, one can take a larger horseshoe $\Lambda_n$ that contains all the horseshoes $\Delta_1,\Delta_2,\Delta^1_n,\Delta^2_n$, thus the above statements hold for $\Lambda_n$.        	
    \end{proof}

    In the following, we take the  horseshoe  $\Lambda_n\subseteq \Lambda$ and measures $\mu_n',\nu_n'\in \mathcal{M}_{inv}(\Lambda_n)$ from Claim~\ref{Claim:horseshoe}.
    
\paragraph{Entropy of the $g$-irregular set $I_g$.}
By item~\ref{item:irregular-HS} of Corollary~\ref{Cor:Thompson-HS}, item (a) above implies that 
$$h_{\rm top}(I_g\cap\Lambda_n)=h_{\rm top}(\Lambda_n).$$
Thus by item (b.2) above, one has   
$$h_{\rm top}(I_g)\geq h_{\rm top}(I_g\cap\Lambda_n)=h_{\rm top}(\Lambda_n)\geq h_{\nu_n'}(X)>h_{\mathcal{M}}(X) -\frac2n.$$ 
Let $n \to+\infty$ we conclude that Item~\ref{item:irregular-SH-Hclass} of Proposition~\ref{Pro:irregular}  holds.

\paragraph{Entropy of the level set $R_g(a)$.}  
        We claim there exists an invariant measure $\omega_n\in\mathcal{M}_{inv}(\Lambda_n)$ such that $\displaystyle \int g {\rm d}\omega_n=a$ and 
        $h_{\omega_n}(X)$ tends to $h_{\mathcal{M}}(a)$ as $n\to\infty$. 
    By item (b.1) above and the fact $\displaystyle \int g{\rm d}\mu_n=a$, one has that 
    $$\displaystyle a-\frac1n<\int g {\rm d}\mu_n'<a+\frac1n.$$
    Without loss of generality, we may assume that 
    $\displaystyle a\leq\int g {\rm d}\mu_n'<a+\frac1n$
    (the other case is analogous).
    By item (a) above, there exists $\theta_n\in\mathcal{M}_{inv}(\Lambda_n)$ such that 
    $$\displaystyle \underline a\leq \int g {\rm d}\theta_n<\frac{\underline a+a}{2}\quad \big(<a\big).$$
    Thus one has $$0<\frac{a-\underline a}{2}<\int g {\rm d}\mu_n'-\int g {\rm d}\theta_n<a-\underline a+\frac{1}{n}.$$
Then, by the affinity of the integral and the entropy function, one has 
that the probability measure $\displaystyle\omega_n=\frac{\int g {\rm d}\mu_n'-a}{\int g {\rm d}\mu_n'-\int g {\rm d}\theta_n}\theta_n+\frac{a-\int g {\rm d}\theta_n}{\int g {\rm d}\mu_n'-\int g {\rm d}\theta_n}\mu_n'$ satisfies 
    $\displaystyle\int g{\rm d}\omega_n=a,$
     and 
    \begin{align*}
    	h_{\omega_n}(X) 
    	&\geq \frac{a-\int g {\rm d}\theta_n}{\int g {\rm d}\mu_n'-\int g {\rm d}\theta_n}h_{\mu_n'}(X)\\
    	&=\left(1+\frac{a-\int g {\rm d}\mu'_n}{\int g {\rm d}\mu_n'-\int g {\rm d}\theta_n}\right)h_{\mu_n'}(X)\\
    	&\geq \left(1-\frac{1}{n}\frac{2}{a-\underline a}\right) \left(h_{ \mathcal{M}}(a) - \frac2n\right).
    \end{align*}
    By Item~\ref{item:levelset-HS} in Corollary~\ref{Cor:Thompson-HS}, one concludes that
    $$h_{\rm top}(R_g(a))\geq h_{\rm top}(R_g(a)\cap\Lambda_n)\geq h_{\omega_n}(X)\geq \left(1-\frac{1}{n}\frac{2}{a-\underline a}\right) \left(h_{ \mathcal{M}}(a) - \frac2n\right).$$
As the right hand-side above tends to $h_{ \mathcal{M}}(a)$ as $n$ tends to infinity, this proves
item~\ref{item:levelset-SH-Hclass} of Proposition~\ref{Pro:irregular}.
\end{proof}

\subsection{Proofs of Theorems~\ref{Thm:irregular-Lorenz} \&~\ref{Thm:irregular-SinHyp-attractor} }\label{sec:proofsAB}

We first prove  Theorem~\ref{Thm:irregular-levelset}.

\begin{proof}[Proof of Theorem~\ref{Thm:irregular-levelset}]
	Assume $X\in\mathscr{X}^1(M)$ and $\Lambda$ is a non-trivial singular hyperbolic homoclinic class of $X$ such that each pair of periodic orbits contained in $\Lambda$ are homocli\-nically related and $\mathcal{M}_{inv}(\Lambda)=\overline{\mathcal{M}_1(\Lambda)}$. Proposition \ref{Pro:Horseshoe-approximation-inv} implies $\mathcal{M}_{inv}(\Lambda)$ has the horseshoe approximation property.
	
	Let $g\in C(\Lambda,\mathbb{R})$. If there exists $a_0\in\mathbb{R}$ such that $\displaystyle \int g {\rm d}\mu=a_0$ for every $\mu\in\mathcal{M}_{inv}(\Lambda)$, then  $\Lambda=R_g(a_0)$ and hence $I_g=\emptyset$.
	
	Now assume  there are   $\omega_1,\omega_2\in \mathcal{M}_{inv}(\Lambda)$  such that  $\displaystyle\int g {\rm d}\omega_{1}\neq \int g {\rm d}\omega_{2}$.
	By Lemma~\ref{Lem:periodic measure dense}, there exist $\nu_1,\nu_2\in \mathcal{M}_{per}(\Lambda)$ such that
	$$
	\displaystyle\int g {\rm d}\nu_{1}\neq \int g {\rm d}\nu_{2}.
	$$
	Since each pair of periodic orbits contained in $\Lambda$ are homoclinic related, the stable manifold  of any periodic orbit is dense in $\Lambda$. Then~\cite[Theorem A]{CV21} implies that $I_{g}$ is residual  in $\Lambda$ 
(see alternatively the proof of~\cite[Corollary VI]{CV21}).

    \paragraph[$I_g$]{Entropy of $I_g$.}
    By Proposition~\ref{Pro:Horseshoe-entropy-invariant}, one has that $\mathcal{M}_{1}(\Lambda)$ satisfies the horseshoe approximation property, thus by Item~\ref{item:irregular-SH-Hclass} of Proposition~\ref{Pro:irregular}, one has  
    $$h_{\rm top}(I_{g})\geq h_{\mathcal{M}_1(\Lambda)}(X)\colon=\sup \{h_{\mu}(X)\colon \mu\in\mathcal{M}_1(\Lambda) \}.$$
    
    Recall that $\mathcal{M}_1(\Lambda)=\Big\{\mu\in \mathcal{M}_{inv}(\Lambda)\colon \mu(\sing(\Lambda))=0 \Big\}$ and $h_{\mu}(X)=0$ if $\mu(\sing(\Lambda))=1$. Thus by the variational  principle, one has 
    $$h_{\mathcal{M}_1(\Lambda)}(X)=\sup \{h_{\mu}(X)\colon \mu\in\mathcal{M}_{inv}(\Lambda) \}=h_{\rm top}(\Lambda).$$
    As a consequence,
    $$h_{\rm top}(I_{g})=h_{\rm top}(\Lambda).$$ 

	\paragraph{Denseness and entropy of $R_g(a)$.}
	Take $a\in\mathbb{R}$ such that 
	$$
	\displaystyle\inf_{\mu\in \mathcal{M}_1(\Lambda)}\int g {\rm d}\mu<a<\sup_{\mu\in \mathcal{M}_1(\Lambda)}\int g {\rm d}\mu.
	$$
	As above, this ensures that there exist $\mu_1,\mu_2\in \mathcal{M}_{per}(\Lambda)$ so that $\displaystyle\int g {\rm d}\mu_{1}<a< \int g {\rm d}\mu_{2}$.
	
	To obtain the denseness of $R_g(a)$ in $\Lambda$, one first constructs a nested sequence of horseshoes $\{\Lambda_n\}_{n\in\mathbb{N}}$  approximating $\Lambda$. 
	Note that all periodic orbits contained in $\Lambda$ are hyperbolic. 
	Since $\Lambda$ is non-trivial, there are countably infinitely many periodic orbits contained in $\Lambda$ and one lists them as $\{\gamma_n\}_{n\in\mathbb{N}}$. 
	Moreover, each pair of periodic orbits in $\Lambda$ are homoclinically related. 
	One constructs $\{\Lambda_n\}_{n\in\mathbb{N}}$ inductively as follows:
	\begin{itemize}
		\item Let $\Lambda_0$ be a horseshoe that contains $\gamma_0$ and $\gamma_1$. Such a horseshoe exists because  $\gamma_0$ and $\gamma_1$ are homoclinically related.
		\item For $n\geq 1$, let $\Lambda_n$ be a horseshoe that contains $\Lambda_{n-1}$ and $\gamma_n$. Such a horseshoe exists because  $\gamma_n$ is homoclinically related with $\Lambda_{n-1}$ in the sense that $\gamma_n$ is homoclinically with all periodic orbits contained in $\Lambda_{n-1}$. Note that if $\gamma_n$ is contained in $\Lambda_{n-1}$, then $\Lambda_n=\Lambda_{n-1}$.
	\end{itemize}
	By construction, one has that $\Lambda_n\subset\Lambda_{n+1}$ for all $n\in\mathbb{N}$. Moreover, by denseness of the periodic orbits in $\Lambda$
	one has that $\Lambda_n$ tends to $\Lambda$ (in the Hausdorff distance) as $n\rightarrow \infty$.
	Recall that one has assumed $\mu_1,\mu_2$ to be two periodic measures, thus there exists $n_0\in\mathbb{N}$ such that $\mu_1,\mu_2\in\mathcal{M}_{inv}(\Lambda_{n_0})$. 
	As a consequence, $\mu_1,\mu_2\in \mathcal{M}_{inv}(\Lambda_{n})$ for all $n\geq n_0$.
	By Item~\ref{item:generic-measure-HS} of Theorem~\ref{Thm:HS-Sigmund}, for each $n\geq n_0$, there exists $\mu_1^n,\mu_2^n\in \mathcal{M}_{inv}(\Lambda_{n})$ such that 
	$$
	\displaystyle\int g {\rm d}\mu_{1}^n<a< \int g {\rm d}\mu_{2}^n
	\qquad {\rm and } \qquad
	\supp(\mu_1^n)=\supp(\mu_2^n)=\Lambda_n.
	$$
	
	Take a suitable $\theta_n\in(0,1)$ for each $n\geq n_0$, such that the linear combination 
	$$
	\nu_n=\theta_n\mu_1^n+(1-\theta_n)\mu_2^n
	\qquad {\rm satisfies} \qquad
	\displaystyle\int g {\rm d}\nu_n=a.
	$$ 
	Note that $\supp(\nu_n)=\Lambda_n$. By Item~\ref{item:mu-generic-HS} of Theorem~\ref{Thm:HS-Sigmund}, the set $G_{\nu_n}$ of $\nu_n$-generic points is non-empty. 
	Take $x_n\in G_{\nu_n}\cap\Lambda_n$, then $\supp(\nu_n)\subset\omega(x_n,\Phi)$ where $\omega(x_n,\Phi)$ is the positive limit set of $x_n$. This implies that $\omega(x_n,\Phi)=\Lambda_n$. Note that $\orb(x_n)\subset G_{\nu_n}$, hence $G_{\nu_n}$ is dense in $\Lambda_n$. As a consequence, one has $\bigcup_{n\geq n_0}G_{\nu_n}$ is dense in $\Lambda$. By the fact that $\bigcup_{n\geq n_0}G_{\nu_n}\subset R_g(a)$, one has $R_g(a)$ is dense in $\Lambda$.
	\medskip
	

	The estimation of the entropy $h_{\rm top}(R_g(a))$ follows similar arguments as for $h_{\rm top}(I_g)$. Since $\mathcal{M}_{inv}(\Lambda)$ satisfies the horseshoe approximation property, using Item~\ref{item:levelset-SH-Hclass} of Proposition~\ref{Pro:irregular}, one has  
	\begin{align*}
	  h_{\rm top}(R_g(a))\geq h_{\mathcal{M}_{inv}(\Lambda)} (a)\colon
	  =\sup\left\{h_\mu(X)\colon \mu\in   \mathcal{M}_{inv}(\Lambda) \text{~and ~}\int g {\rm d}\mu=a \right\}.
	\end{align*}
%
	The inverse inequality is obtained as an adaptation of Bowen's result~\cite[Theorem 2]{Bowen-entropy} to the flow case, as we now explain. 
	For a point $x\in\Lambda$, denote by $V(x)$ and $V(x,\phi_1)$ the limit sets of empirical measures of $x$ under the action of the flow $(\phi_t)_t$ and its time-one map $\phi_1$, respectively. In other words,
	$$V(x)=\left\{\mu\in\mathcal{M}_{inv}(\Lambda)\colon \exists t_i\rightarrow+\infty \text{~s.t.~} \mu=\lim_{i\rightarrow\infty}\frac{1}{t_i}\int_0^{t_i} \delta_{\phi_s(x)} \, {\rm d}s\right\}
	$$
	and
	$$V(x,\phi_1)=\left\{\mu\in\mathcal{M}_{inv}(\Lambda,\phi_1)\colon \exists n_i\rightarrow+\infty \text{~s.t.~} \mu=\lim_{i\rightarrow\infty}\frac{1}{n_i}\sum_{k=0}^{n_i} \delta_{\phi_k(x)} \, \right\}.$$
	To simplify notations, let $\displaystyle b=\sup\Big\{h_\mu(X)\colon \mu\in   \mathcal{M}_{inv}(\Lambda) \text{~and ~}\int g {\rm d}\mu=a \Big\}$. We need the following:

\medskip
\begin{claim*}
For any $x\in R_g(a)$ and any $\nu\in V(x,\phi_1)$, one has $h_{\nu}(\phi_1)\leq b$.
\end{claim*}

    \begin{proof}[Proof of the claim]
    	Take $x\in R_g(a)$. Each $\mu\in V(x)$ satisfies that $ \int g {\rm d}\mu=a$ and thus $h_\mu(X)\leq b$. On the other hand, for any $\nu\in V(x,\phi_1)$,  the measure $\mu=\int_0^1 (\phi_s)_*\nu \, {\rm d}s$ is invariant by the flow and belongs to $V(x)$. As a consequence, one has $h_{\nu}(\phi_1)=h_{\mu}(\phi_1)=h_{\mu}(X)\leq b$ since the metric entropy is affine on the space of invariant probability measures. 
    \end{proof}
    The above claim implies that
    $$R_g(a)\subset QR(b)\footnote{The notation $QR(b)$ is borrowed from~\cite[Theorem 2]{Bowen-entropy} which ensures that $h_{\rm top}(QR(b),\phi_1)\leq b$.}\colon=\{x\in\Lambda\colon \exists \nu\in V(x,\phi_1) \text{~\rm with~} h_{\nu}(\phi_1)\leq b\}.$$
	By~\cite[Theorem 2]{Bowen-entropy}, one concludes that 
	$$
	h_{\rm top}(R_g(a)) 
	\leq h_{\rm top}(QR(b),\phi_1)\leq b.
	$$


	\bigskip
	Finally, it remains to show that the set 
	$$\hat C=\left\{g\in C(\Lambda,\mathbb{R})\colon \int g {\rm d}\mu_{1}\neq \int g {\rm d}\mu_{2} \text{~for some~} \mu_1,\mu_2\in \mathcal{M}_1(\Lambda)\right\}$$
	is open and dense in $C(\Lambda,\mathbb{R})$.
	The openness is by continuity of the integrals in the weak$^*$ topology.  
	To prove denseness, take two different periodic orbits $\orb(p),\orb(q)$ in $\Lambda$, take $\hat g\in C(\Lambda,\mathbb{R})$ such that 
	$$\hat g|_{\orb(p)}=0,~\hat g|_{\orb(q)}=1~\text{~and~} 0\leq\hat g(x)\leq 1~\forall x\in\Lambda.$$
	Such $\hat g$ exists since  $\orb(p)$ and $\orb(q)$ are two distinct periodic orbits.
	Let $\nu_1,\nu_2$ be the two periodic measures associated to $\orb(p),\orb(q)$ respectively.
	Note that $\nu_1,\nu_2\in\mathcal{M}_1(\Lambda)$.
	For any $g\in C(\Lambda,\mathbb{R})$, 
	\begin{itemize}
		\item if $\displaystyle \int g {\rm d}\nu_{1}\neq \int g {\rm d}\nu_{2}$, then $g\in\hat C$;
		\item if otherwise $\displaystyle \int g {\rm d}\nu_{1}= \int g {\rm d}\nu_{2}$, let $g_n=g+\frac{1}{n}\hat g$, then $g_n\in\hat C$ and $g=\lim\limits_{n\rightarrow \infty} g_n$.
	\end{itemize}
    This shows $\hat C$ is dense in $C(\Lambda,\mathbb{R})$ and completes the proof of Theorem~\ref{Thm:irregular-levelset}. 	 
\end{proof}

Now we are ready to prove Theorem~\ref{Thm:irregular-Lorenz} \&~\ref{Thm:irregular-SinHyp-attractor}.

\begin{proof}[Proofs of Theorem~\ref{Thm:irregular-Lorenz} \&~\ref{Thm:irregular-SinHyp-attractor}]
	By Item~\ref{item:generic-Lorenz-attractor} of Corollary~\ref{Pro:HS-approximation-generic}, there exists a residual subset $\mathcal{R}^r\subset\mathcal{X}^r(M^3)$ where $r\in\mathbb{N}_{\geq 2}$ such that for any $X\in\mathcal{R}^r$, if $\Lambda$ is a geometric Lorenz attractor for $X$, then $\mathcal{M}_{inv}(\Lambda)=\overline{\mathcal{M}_1(\Lambda)}$. Moreover, by Proposition~\ref{Pro:lorenz-robust}, every pair of periodic orbits are homoclinically related. 
		
	By Item~\ref{item:generic-SH-attractor} of Corollary~\ref{Pro:HS-approximation-generic}, there exists a residual subset $\mathcal{R}\subset\mathcal{X}^1(M)$ such that for any $X\in\mathcal{R}$, if $\Lambda$ is a singular hyperbolic attractor for $X$, then $\mathcal{M}_{inv}(\Lambda)=\overline{\mathcal{M}_1(\Lambda)}$ and 
	every pair of periodic orbits are homoclinically related by~\cite[Theorem B]{CY-Robust-attractor}. 
    
    Then Theorem~\ref{Thm:irregular-Lorenz} \&~\ref{Thm:irregular-SinHyp-attractor} are direct consequences of Theorem~\ref{Thm:irregular-levelset}.
\end{proof}

\section{Large deviations}\label{Section:proof-LD}

Here we will focus on large deviations for singular hyperbolic attractors, including the geometric Lorenz attractor. The theory of large deviations for singular hyperbolic attractors is still not completely understood, appart from the level-1 large deviations upper bounds associated to its SRB measure in  \cite{Ar07,AST19,CYZ}. 
This section is devoted to the proof of Theorem~\ref{Thm:LD2B-Lorenz-SH-attractor}, which generalizes the large deviation results by 
L. S. Young~\cite{You90} for flows with singularities. 
Our approach is inspired by \cite[Section~3]{PS05}, which establishes criteria for level-2 large deviations principles
for discrete-time dynamical systems. 
We overcome this fact dealing simultaneously with the flow dynamics (using the horseshoe approximation property and corresponding entropy denseness results) and induced discrete-time dynamics (taking suitable time-{$s_0$} maps). 
Due to the presence of singularities, we still have to consider the following special subset 
$$
\mathcal{M}_1(\Lambda)=\Big\{\mu\in \mathcal{M}_{inv}(\Lambda)\colon \mu(\sing(\Lambda))=0 \Big\}
$$
of $\mathcal{M}_{inv}(\Lambda)$. Recall that $\mathcal{M}(\Lambda)$ is the space of all probability measures supported on $\Lambda$.
We prove the following theorem in this section.
\begin{theorem}\label{Thm:LD2B} (Level-2 large deviations) 
    Let $X\in\mathscr{X}^1(M)$ and $\Lambda$ be a singular hyperbolic homoclinic class such that each pair of periodic orbits in $\Lambda$ are homoclinically related and $\overline{\mathcal M_{1}(\Lambda)}=\mathcal M_{inv}(\Lambda)$.   
    Assume $\mu_\psi$ is a weak Gibbs measure with respect to a H\"older continuous potential $\psi\colon \Lambda\to\mathbb R$ with $\Lambda_H$ being the $\mu_\psi$-full measure set such that \eqref{Def:Gibbs-weak} satisfies. 
    Then one has:
    \begin{enumerate}
    	\item\label{item:upper} (upper bound) There exists $c_\infty\leq 0$ so that 
    	\begin{align*}
    		\limsup_{t\to\infty} & \frac1t \log \mu_\psi \big(\{x\in \Lambda \colon \cE_t(x) \in \mathcal K\} \big)  
    		\le 
    		\max\Big\{ 
    		-\inf_{\mu \in \cK} \mathfrak I_{\psi}(\mu) \;,\;  c_\infty
    		\Big\}
    	\end{align*}
    	for any closed subset $\cK\subset \mathcal M(\Lambda)$.
    	
    	\item\label{item:lower} (lower bound)    If $\cO \subset \mathcal M(\Lambda)$ is an open set and $\nu\in \cO$ is ergodic satisfying $\nu(\Lambda_H)=1$, then
    	\begin{align*}
    		\liminf_{t\to +\infty} \frac1t \log \mu_\psi \Big( \Big\{x\in \Lambda \colon \cE_t(x)\in \cO \Big\} \Big)
    		\ge  \displaystyle -P_{\rm top}(\Lambda,\psi) + h_\nu(X) + \int {\psi} \, {\rm d}\nu.
    	\end{align*}
    	
    	\item\label{item:lower-Gibbs} (lower bound for Gibbs measure) If $\mu_\psi$ is a Gibbs measure with respect to $\psi$, then
    	\begin{align*}
    		\liminf_{t\to +\infty} \frac1t \log \mu_\psi \Big( \Big\{x\in \Lambda \colon \cE_t(x)\in \cO\Big\} \Big)
    		\ge  -\inf_{\mu \in \cO} \mathfrak I_{\psi}(\mu)
    	\end{align*}
    	for any open subset $\cO \subset \mathcal M(\Lambda)$.
    	
    \end{enumerate}
    
\end{theorem}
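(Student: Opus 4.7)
The plan is to reduce this level-$2$ large deviations estimate to a Bowen ball accounting in the spirit of \cite{PS05}, substituting the uniform Gibbs inequality there by the weak Gibbs bound \eqref{Def:Gibbs-weak} and replacing specification-type arguments by the horseshoe approximation property of Proposition~\ref{Pro:Horseshoe-approximation-inv}. The constant $c_\infty$ will measure a subexponential tail of the weak Gibbs constants, concretely
$$
c_\infty := \lim_{\alpha\to\infty}\limsup_{t\to\infty}\frac{1}{t}\log\mu_\psi\bigl(\{x\in\Lambda\colon C_t(x,\varepsilon_0)>e^{\alpha t}\}\bigr)\le 0,
$$
and vanishes in the strong Gibbs case.

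\textit{Upper bound.} Fix $\varepsilon_0>0$ as in \eqref{Def:Gibbs-weak} and $0<\varepsilon<\varepsilon_0$, and for each $\alpha>0$ introduce the closed sublevel $\Lambda_{t,\alpha}:=\{C_t(\cdot,\varepsilon)\le e^{\alpha t}\}$, relying on lower semicontinuity of $C_t(\cdot,\varepsilon)$. On the complement $\Lambda\setminus\Lambda_{t,\alpha}$ the $\mu_\psi$-measure contributes at most $c_\infty$ to the exponential rate after $\alpha\to\infty$. On the intersection, cover $\mathcal{K}$ by finitely many weak$^*$-neighborhoods $F_j\ni\mu_j$ of diameter $\delta$ and consider a maximal $(t,\varepsilon)$-separated subset $E_j$ of $\{\mathcal{E}_t(x)\in F_j\}\cap\Lambda_{t,\alpha}$. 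The empirical constraint pins $\int_0^t\psi(\phi_s x)\,\mathrm{d}s$ to within $O(\delta)t$ of $t\int\psi\,\mathrm{d}\mu_j$, and upper semicontinuity of the entropy map on singular-hyperbolic sets~\cite{PYY} bounds $\sum_{x\in E_j}e^{\int_0^t\psi(\phi_s x)\,\mathrm{d}s}$ by $e^{t\sup_{\mu\in\overline{F_j}}(h_\mu(X)+\int\psi\,\mathrm{d}\mu)+o_\delta(t)}$ in Pfister--Sullivan style. Covering $\{\mathcal{E}_t(x)\in F_j\}\cap\Lambda_{t,\alpha}$ by the Bowen balls $B(x_i,t,\varepsilon)$ and inserting the weak Gibbs upper bound $\mu_\psi(B(x_i,t,\varepsilon))\le e^{\alpha t}e^{-tP_{\rm top}(X,\psi)+\int_0^t\psi(\phi_s x_i)\,\mathrm{d}s}$ valid on $\Lambda_{t,\alpha}$, then letting $\delta\to 0$ and $\alpha\to\infty$ after a standard balancing, yields $\max\{-\inf_\mathcal{K}\mathfrak{I}_\psi,\,c_\infty\}$ as required.

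\textit{Lower bound under the weak Gibbs assumption.} Given ergodic $\nu\in\mathcal{O}$ with $\nu(\Lambda_H)=1$, the subexponential rate $\tfrac{1}{t}\log C_t(x,\varepsilon)\to 0$ on $\Lambda_H$ plus Egorov's theorem furnishes, for each $\eta>0$, a compact $K_\eta\subset\Lambda_H$ with $\nu(K_\eta)>1-\eta$ on which $C_t(x,\varepsilon)\le e^{\eta t}$ uniformly for $t\ge T_\eta$. Intersecting $K_\eta$ with the Birkhoff-good set $\{\mathcal{E}_t(x)\in\mathcal{O}\}\cap\{\tfrac{1}{t}|\int_0^t\psi(\phi_s x)\,\mathrm{d}s-t\int\psi\,\mathrm{d}\nu|<\eta\}$ and with the Brin--Katok set $\{\nu(B(x,t,\varepsilon))\le e^{-t(h_\nu(X)-\eta)}\}$ produces a set of $\nu$-measure $>1-3\eta$ for $t$ large; it contains a $(t,2\varepsilon)$-separated family of cardinality at least $e^{t(h_\nu(X)-2\eta)}$. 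The weak Gibbs lower bound attaches to each such point a disjoint Bowen ball of $\mu_\psi$-measure at least $e^{-\eta t}e^{-tP_{\rm top}(X,\psi)+t(\int\psi\,\mathrm{d}\nu-\eta)}$, and summation followed by $\eta\to 0$ completes item~\ref{item:lower}.

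\textit{Gibbs case and main obstacle.} When $\mu_\psi$ is Gibbs one has $c_\infty=0$, $\Lambda_H=\Lambda$, and $C_t\equiv C$, so the constraint $\nu(\Lambda_H)=1$ becomes vacuous. For arbitrary $\nu\in\mathcal{O}$, the density $\overline{\mathcal{M}_1(\Lambda)}=\mathcal{M}_{inv}(\Lambda)$ combined with Proposition~\ref{Pro:Horseshoe-approximation-inv} supplies an ergodic $\nu'\in\mathcal{O}$ supported on a horseshoe $\Lambda'\subset\Lambda\setminus\sing(\Lambda)\subset\Lambda_H$ with $h_{\nu'}(X)+\int\psi\,\mathrm{d}\nu'$ within $\eta$ of $h_\nu(X)+\int\psi\,\mathrm{d}\nu$; applying item~\ref{item:lower} to $\nu'$ and infimizing over $\nu\in\mathcal{O}$ yields item~\ref{item:lower-Gibbs}. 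The main technical difficulty everywhere is the uniform control of the weak Gibbs constants $C_t(x,\varepsilon)$ along the orbits that matter: it forces the tail correction $c_\infty$ in the upper bound, the restriction $\nu(\Lambda_H)=1$ in the weak Gibbs lower bound, and is the reason one must sidestep Poincar\'e sections (with their singularity-induced discontinuities) and work with flow Bowen balls directly.
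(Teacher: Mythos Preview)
Your overall strategy matches the paper's: Pfister--Sullivan style counting of separated sets on Bowen balls, upper semicontinuity of entropy on singular-hyperbolic sets for the upper bound, and horseshoe approximation (Proposition~\ref{Pro:Horseshoe-approximation-inv}) to pass from ergodic to general measures in the Gibbs lower bound. There is, however, a genuine error in the upper bound.

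Your definition of $c_\infty$ sends $\alpha\to\infty$, and you then write ``letting $\delta\to 0$ and $\alpha\to\infty$''. This is the wrong direction. In the splitting $\{C_t\le e^{\alpha t}\}\cup\{C_t>e^{\alpha t}\}$, the weak Gibbs upper bound on the first piece carries a multiplicative factor $e^{\alpha t}$, so after taking $\tfrac1t\log$ one obtains, up to $o_\delta(1)$, at most $\max\bigl\{\alpha-\inf_{\mathcal K}\mathfrak I_\psi,\ \text{tail}(\alpha)\bigr\}$. Sending $\alpha\to\infty$ makes the first term blow up and yields nothing. The correct limit is $\alpha\to 0^+$; the paper records this as
\[
c_\infty=\limsup_{\delta\to 0}\ \limsup_{t\to\infty}\frac{1}{t}\log\mu_\psi\bigl(\{x:C_t(x,\varepsilon)>e^{\delta t}\}\bigr).
\]
With that correction your upper bound sketch agrees with the paper's Proposition~\ref{prop:upper-wGibbs} together with Lemma~\ref{le:auxPS2} (the paper slices $\mathcal K$ by level sets of $\eta\mapsto\int\psi\,d\eta$ rather than by generic weak$^*$ balls, but this difference is cosmetic).

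For item~\ref{item:lower} you produce the large separated set directly via Egorov and the flow Brin--Katok theorem. The paper takes a different route: it chooses $s_0>0$ so that $\nu$ is ergodic for the time-$s_0$ map $f=\phi_{s_0}$ (Pugh--Shub~\cite{PS}), invokes \cite[Proposition~2.1]{PS05} for the discrete system $f$ to obtain an $(n,\varepsilon)$-separated set in $\Lambda_H\cap\{C_{s_0n}\le e^{\delta s_0 n}\}$ of cardinality $\ge e^{n(h_\nu(f)-\delta)}$, and then compares $\mathcal E^f_n$ with $\mathcal E_{s_0n}$ using translation invariance of $d^*$. Your route is more self-contained and avoids the time-$s_0$ reduction; the paper's has the advantage of quoting an existing discrete-time lemma verbatim. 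Both work, but in your version be careful that the Brin--Katok bound, the Egorov control on $C_t(\cdot,\varepsilon')$, and the weak Gibbs lower bound you finally apply all refer to the same scale $\varepsilon'$ (you switch between $\varepsilon$ and $2\varepsilon$ without tracking this). Item~\ref{item:lower-Gibbs} is handled identically in both approaches.
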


For completeness of the paper, we give a short explanation of the proof of Theorem~\ref{Thm:LD2B-Lorenz-SH-attractor}.
\begin{proof}[Proof of Theorem~\ref{Thm:LD2B-Lorenz-SH-attractor}]
	One takes the residual subset $\mathcal{R}^r\subset\mathcal{X}^r(M^3)$ where $r\in\mathbb{N}_{\geq 2}$  and the residual subset $\mathcal{R}\subset\mathcal{X}^1(M)$ in the proofs of Theorem~\ref{Thm:irregular-Lorenz} \&~\ref{Thm:irregular-SinHyp-attractor}. 	
	In both cases when $\Lambda$ is a geometric Lorenz attractor for $X\in\mathcal{R}^r$ or $\Lambda$ is a singular hyperbolic attractor for $X\in\mathbb{R}$, one has that $\mathcal{M}_{inv}(\Lambda)=\overline{\mathcal{M}_1(\Lambda)}$ and $\Lambda$ is a singular hyperbolic homoclinic class in which every pair of periodic orbits are homoclinically related. 
	Thus Theorem~\ref{Thm:LD2B-Lorenz-SH-attractor} is a consequence of Theorem~\ref{Thm:LD2B}	
\end{proof}
\medskip

In what follows, unless emphasized, we assume that  $\Lambda$ is a singular hyperbolic homoclinic class of $X\in\mathscr{X}^1(M)$ such that each pair of periodic orbits in $\Lambda$ are homoclinically related and we also assume that $\overline{\mathcal M_{1}(\Lambda)}=\mathcal M_{inv}(\Lambda)$. 
Assume $\mu_{\psi}\in \mathcal{M}_{inv}(\Lambda)$ is a weak Gibbs measure with respect to a H\"older continuous potential $\psi \colon\Lambda\rightarrow \mathbb R$ and $\Lambda_H\subset\Lambda$ be the $\mu_\psi$-full measure set satisfied for~\eqref{Def:Gibbs-weak}.
To be precise, there exists $\eps_0>0$ such that for any $x\in \Lambda_H,t>0$ and $\varepsilon\in(0,\eps_0)$, there exist constants $C_t(x,\eps)>0$ satisfying: 
\begin{equation}\label{Eq:Gibbs-weak}
	\frac1{C_t(x,\eps)} \le \frac{\mu_\psi \Big( B\big(y,t,\eps,\Phi\big)  \Big)}{e^{- t\,P_{\rm top}(X,\psi) + \int_0^t \psi(\phi_s(x))\, {\rm d}s}}
	\le  C_t(x,\eps).
\end{equation}
for any dynamic Bowen ball $B(y,t,\eps,\Phi)\subset B(x,t,\eps_0,\Phi)$.

\subsection{Lower bound}  
We give the lower bounds of large deviations in this section, that is to prove item~\ref{item:lower} \&~\ref{item:lower-Gibbs} of Theorem~\ref{Thm:LD2B}. The following instrumental result proves item~\ref{item:lower}.
  
\begin{proposition}\label{prop:lower-wGibbs}
    Let $\Lambda, \psi, \mu_\psi$ and $\Lambda_H$  be as in the assumption above.
    If $\cO \subset \mathcal M(\Lambda)$ is an open set, $\nu\in \cO$ is ergodic {and $\nu(\Lambda_H)=1$} then
    \begin{align*}
    \liminf_{t\to +\infty} \frac1t \log \mu_\psi \Big( \Big\{x\in \Lambda \colon \cE_t(x)\in \cO \Big\} \Big)
    \ge  \displaystyle -P_{\rm top}(\Lambda,\psi) + h_\nu(X) + \int {\psi} \, {\rm d}\nu.
    \end{align*}
\end{proposition}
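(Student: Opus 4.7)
The plan is to pack the sub-level set $\{x : \cE_t(x) \in \cO\}$ with pairwise disjoint dynamic Bowen balls whose centers form a Katok-type $(t, \varepsilon)$-separated set of cardinality roughly $e^{t h_\nu(X)}$, supported in a subset of $\Lambda_H$ on which the Birkhoff averages of the relevant observables and the weak Gibbs constants $C_t(\cdot, \varepsilon)$ are uniformly controlled. The weak Gibbs property \eqref{Eq:Gibbs-weak} will then convert the cardinality into a lower bound on $\mu_\psi$-measure.

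More concretely, fix $\delta > 0$. Since $\cO$ is open and $\nu \in \cO$, one picks continuous test functions $g_1, \ldots, g_k$ and $\eta > 0$ so that every $\mu \in \mathcal M(\Lambda)$ with $|\int g_j\,{\rm d}\mu - \int g_j\,{\rm d}\nu| < \eta$ for all $j$ lies in $\cO$. By uniform continuity of $g_1, \ldots, g_k$ and $\psi$, one chooses $\varepsilon \in (0, \varepsilon_0)$ so small that $y \in B(x, t, \varepsilon, \Phi)$ forces $|\frac{1}{t}\int_0^t g_j(\phi_s y)\,{\rm d}s - \frac{1}{t}\int_0^t g_j(\phi_s x)\,{\rm d}s| < \eta/2$ for every $j$ and $t \geq 1$, and similarly with tolerance $\delta$ for $\psi$. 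By Birkhoff's ergodic theorem applied to $\nu$ (together with $\nu(\Lambda_H)=1$ and $\frac{1}{t}\log C_t(x, \varepsilon) \to 0$ pointwise on $\Lambda_H$) and Egorov's theorem, one obtains a set $A \subset \Lambda_H$ with $\nu(A) \geq 1/2$ and $T_0 \geq 1$ such that, for all $x \in A$ and $t \geq T_0$, $\cE_t(x)$ is $(\eta/2)$-close to $\nu$ on each $g_j$, $|\frac{1}{t}\int_0^t \psi(\phi_s x)\,{\rm d}s - \int \psi\,{\rm d}\nu| < \delta$, and $\frac{1}{t}\log C_t(x, \varepsilon) < \delta$. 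Katok's entropy formula for the ergodic measure $\nu$, applied to $\phi_1$ and transferred to the flow (Bowen balls for $\Phi$ and $\phi_1$ are comparable up to a uniform adjustment of $\varepsilon$), then yields $T_1 \geq T_0$ such that for $t \geq T_1$ there is a $(t, \varepsilon)$-separated set $E_t \subset A$ with $|E_t| \geq e^{t(h_\nu(X) - \delta)}$. The $(\varepsilon/2)$-Bowen balls $\{B(x_i, t, \varepsilon/2, \Phi)\}_{x_i \in E_t}$ are pairwise disjoint, and by the choice of $\varepsilon$ every $y$ in any of them satisfies $\cE_t(y) \in \cO$. The weak Gibbs inequality \eqref{Eq:Gibbs-weak} applied with $y = x = x_i \in \Lambda_H$, combined with the bounds on $A$, gives
$$
\mu_\psi\bigl(B(x_i, t, \varepsilon/2, \Phi)\bigr)
\geq \frac{e^{-t P_{\rm top}(X,\psi) + \int_0^t \psi(\phi_s x_i)\,{\rm d}s}}{C_t(x_i, \varepsilon)}
\geq e^{-t\bigl(P_{\rm top}(X,\psi) - \int \psi\,{\rm d}\nu + 2\delta\bigr)}.
$$
Summing the disjoint balls yields $\mu_\psi(\{y : \cE_t(y) \in \cO\}) \geq e^{t(h_\nu(X) - P_{\rm top}(X,\psi) + \int \psi\,{\rm d}\nu - 3\delta)}$; taking $\liminf_{t \to \infty} \frac{1}{t}\log$ and then $\delta \to 0$ concludes.

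The main technical point will be the interplay between the continuous-time weak Gibbs inequality and Katok's formula, which is naturally phrased for the discrete system $\phi_1$: uniform continuity of the flow on $[0,1] \times \Lambda$ makes Bowen balls for $\Phi$ and $\phi_1$ comparable up to a small change of $\varepsilon$, so that $(t,\varepsilon)$-separation transfers directly to the flow and the Bowen-ball measure estimates remain valid. The secondary subtlety is that $C_t(x, \varepsilon)$ depends on $x$, so uniform control on a set of large $\nu$-measure—supplied by Egorov combined with the standing assumption $\nu(\Lambda_H) = 1$—is indispensable; this is exactly why the lower bound is restricted to ergodic $\nu$ with $\nu(\Lambda_H)=1$, and why item~\ref{item:lower-Gibbs} for genuine Gibbs measures removes this restriction, since there $C_t$ is constant and any $\nu\in\mathcal M_{inv}(\Lambda)$ can be approximated by ergodic ones via the entropy-denseness supplied by Proposition~\ref{Pro:Horseshoe-approximation-inv}.
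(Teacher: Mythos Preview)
Your argument is correct and follows the same strategy as the paper: manufacture a large $(t,\varepsilon)$-separated set inside $\Lambda_H\cap\{C_t(\cdot,\varepsilon)\le e^{\delta t}\}$ whose empirical measures lie near $\nu$, then sum the weak-Gibbs lower bounds over the disjoint half-radius Bowen balls. The paper packages the separated-set step by quoting \cite[Proposition~2.1]{PS05} for a time-$s_0$ map and invokes Pugh--Shub \cite{PS} to pick $s_0$ so that $\nu$ is $\phi_{s_0}$-ergodic, whereas you unpack the Brin--Katok/Egorov argument directly; just be aware that $\nu$ need not be $\phi_1$-ergodic, so when you appeal to Katok's formula for $\phi_1$ you should note that flow-ergodicity forces every $\phi_1$-ergodic component of $\nu$ to have entropy $h_\nu(X)$, which is all the covering/counting argument needs.
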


\begin{proof}
The argument is inspired by \cite[Proposition~3.1]{PS05}, with the necessary adaptations to the context of weak Gibbs measures.
Recall that one takes a dense subset $\big\{\varphi_i\big\}_{i=1}^{\infty}$ of $C(\Lambda,\mathbb{R})$ where  $\varphi_i$ is not the zero function for every $i\geq 1$ and, for any  $\mu,\nu\in \mathcal{M}(\Lambda)$,
$$\displaystyle d^*(\mu,\nu)=\sum_{i=1}^{\infty}\frac{|\int \varphi_i {\rm d}\mu-\int \varphi_i {\rm d}\nu|}{2^i\|\varphi_i\|}.$$
In consequence:
\begin{enumerate}
\item[(i)]
 $\displaystyle d^*(\beta\mu,\beta\nu)=\beta\, \displaystyle d^*(\mu,\nu)$ for every $\mu,\nu\in \mathcal{M}(\Lambda)$ and $\beta>0$,
\item[(ii)] $\displaystyle d^*(\mu_1+\mu_2,\nu_1+\nu_2) \le \displaystyle d^*(\mu_1,\nu_1)+\displaystyle d^*(\mu_2,\nu_2)$
for every $\mu_1,\mu_2,\nu_1,\nu_2\in \mathcal{M}(\Lambda)$.
 \end{enumerate}
Since $\cO$ is open in the weak$^*$-topology and $\nu\in \cO$, by the definition of the weak$^*$ topology 
one may choose $\delta>0$ and finitely many functions $\varphi_1, \dots, \varphi_{i_0} \in C(\Lambda,\mathbb R)$ as above so that 
the open neighborhood 
$$
\cO^{3\delta}:=\Big\{ \eta \in \mathcal M(\Lambda) \colon  \Big|\int \varphi_i\, d\nu -\int \varphi_i \, d\eta\Big|<3\delta, \; \forall \,1\le i \le i_0\Big\}
$$
is contained in $\cO$ and 
$$\displaystyle \left|\int \psi\, d\nu -\int \psi \, d\eta\right|<4\delta\quad \text{for any} \quad \eta\in\cO^{3\delta}.
$$
 
As $\mu_\psi$ is a weak Gibbs measure, recall that for any
$x\in \Lambda_H$, $\eps>0$ and $t>0$ there exist constants $C_t(x,\eps)>0$ satisfying ~\eqref{Eq:Gibbs-weak}. 
Let $\eps>0$ be small and fixed (to be chosen below).
Firstly, by \cite{PS}, the set of real numbers $s_0>0$ so that $\nu$ is ergodic for the time-$s_0$ map $f=\phi_{s_0}$ is Baire generic. One chooses  such an $s_0>0$ small enough such that
$$
\sup_{x\in \Lambda} d^*\Big(\frac1{s_0} \int_0^{s_0} \delta_{\phi_s(x)} \, ds, \delta_x \Big) <\delta
$$
and let $f=\phi_{s_0}$ denote the time-$s_0$ map.
Then Proposition~2.1 in \cite{PS05} applied to the open neighborhood $\cO^\delta$ of $\nu$ ensures that there is $N\ge1$ and for every $n\ge N$ there exists a set
\footnote{Equation ~\eqref{eq:mod} is a modified version of the statement of Proposition 2.1. Nevertheless, in the notation of \cite{PS05}, 
the argument carries out identically, 
if one replaces the sets $X_{n,F}^{B^\delta}$ at equation (2.25) in \cite{PS05} by $X_{n,F}^{B^\delta} \cap A_n$ for some family of sets $(A_n)_{n\ge 1}$ such that $\nu(A_n)$ tends to 1 as $n~\to\infty$.
This is because of the fact $\nu(\Lambda_H)=1$.} 
\begin{equation}\label{eq:mod}
D\subset \left\{ x\in \Lambda_H  \colon \mathcal E^f_n(x) \in \cO^\delta \; \&\; C_{s_0 n}(x,\eps/2) \le e^{\delta s_0 n}\right\} ~~~\text{where} ~~\mathcal E^f_n(x) =\frac{1}{n}\sum_{i=0}^{n-1}\delta_{f^i(x)}
\end{equation}
such that $D$ is $(n,\eps)$-separated and has cardinality larger than or equal to $e^{n (h_\nu(f)-\delta)}$.
Using that $f=\phi_{s_0}$ it is clear that the set $D$ is $(s_0 n, \eps)$-separated with respect to the flow $\Phi=(\phi_t)_t$. 
Taking $t=s_0 n$, properties (i) and (ii) above imply 
\begin{align*}
d^*(\mathcal E_t(x),\mathcal E^f_n(x))
	& = \frac1n d^*\Big(\sum_{j=0}^{n-1} \frac1{s_0} \int_0^{s_0} \delta_{\phi_s(f^j(x))} \, ds, \sum_{j=0}^{n-1} \delta_{f^j(x)} \Big) \\
	& \le \frac1n \sum_{j=0}^{n-1} d^*\Big( \frac1{s_0} \int_0^{s_0} \delta_{\phi_s(f^j(x))} \, ds, \delta_{f^j(x)} \Big)
	< \delta.
\end{align*}
Analogously, if $x\in \Lambda$ and $y\in B(x,t, \eps, \Phi)$ then 
\begin{align*}
d^*(\mathcal E_t(y),\mathcal E_t(x))
	& = \frac1n d^*\Big(\sum_{j=0}^{n-1} \frac1{s_0} \int_0^{s_0} \delta_{\phi_s(f^j(y))} \, ds, 
				\sum_{j=0}^{n-1} \frac1{s_0} \int_0^{s_0} \delta_{\phi_s(f^j(x))} \, ds \Big) \\
	& \le \frac1n \sum_{j=0}^{n-1} d^*\Big( \frac1{s_0} \int_0^{s_0} \delta_{\phi_s(f^j(x))} \, ds, \frac1{s_0} \int_0^{s_0} \delta_{\phi_s(f^j(y))} \, ds \Big)<\delta.
\end{align*}
Therefore one may reduce $\eps$, if necessary, to guarantee that the summands in the right hand side above are arbitrarily small and
consequently
$$
\bigcup_{x\in D} B(x,t, \eps, \Phi) \subset \left\{ x\in \Lambda \colon \mathcal E_{t}(x) \in \cO^{3\delta} \right\}.
$$
Therefore, using the definition of weak Gibbs measure in ~\eqref{Eq:Gibbs-weak}, 
that $h_\nu(f)=s_0 h_\nu(X)$ and that the dynamic balls $B(x,t, \eps/2, \Phi)$ are pairwise disjoint, one concludes that
\begin{align*}
\frac1t \log \mu_\psi \Big( \Big\{x\in \Lambda \colon \cE_t(x)\in \cO \Big\} \Big)
	& \ge \frac1t \log \sum_{x\in D} \mu_\psi \big(\, B(x,t, \eps/2, \Phi)\, \big)\\
	& \ge \frac1t \log \sum_{x\in D} \big[ C_t(x,\eps)^{-1} {e^{- t\,P_{\rm top}(X,\psi) + \int_0^t \psi(\phi_s(x))\, {\rm d}s}} \big]
\end{align*}
Taking $n\to\infty$, which makes $t\to\infty$ as well, and by the choice of $D$ in ~\eqref{eq:mod},  we conclude that 
$$
\liminf_{t\to +\infty} \frac1t \log \mu_\psi \Big( \Big\{x\in \Lambda \colon \cE_t(x)\in \cO \Big\} \Big)
    \ge  \displaystyle - P_{\rm top}(X,\psi) + h_\nu(X) + \int {\psi} \, {\rm d}\nu -\big(1+\frac1{s_0}\big)\delta.
$$
Since $\delta>0$ is small and arbitrary, this proves the proposition.
\end{proof}

As a consequence of Proposition~\ref{prop:lower-wGibbs}, one can now prove the lower bound estimate for Gibbs measures in item~\ref{item:lower-Gibbs} of Theorem~\ref{Thm:LD2B}.
More precisely:

\begin{corollary}\label{cor:lower}
Let $\Lambda$ and $\psi$ be from the assumption of Proposition~\ref{prop:lower-wGibbs}.
Assume further that $\mu_\psi$ is a Gibbs measure with respect to $\psi$.	
Given an open subset $\cO \subset \mathcal M(\Lambda)$ one has that 
\begin{align*}
\liminf_{t\to +\infty} \frac1t \log \mu_\psi \Big( \Big\{x\in \Lambda \colon \cE_t(x)\in \cO\Big\} \Big)
    \ge  -\inf_{\mu \in \cO} \mathfrak I_{\psi}(\mu).
\end{align*}
\end{corollary}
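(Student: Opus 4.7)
The plan is to reduce the general (possibly non-ergodic) case to the ergodic case already handled in Proposition~\ref{prop:lower-wGibbs}. Since $\mu_\psi$ is a genuine Gibbs measure, the weak-Gibbs constants $C_t(x,\eps)$ may be chosen uniformly bounded and $\Lambda_H=\Lambda$; in particular every $\nu\in\mathcal M_{inv}(\Lambda)$ satisfies $\nu(\Lambda_H)=1$, so Proposition~\ref{prop:lower-wGibbs} applies to \emph{any} ergodic invariant measure supported on $\Lambda$.

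First I would fix an open set $\cO\subset\mathcal M(\Lambda)$ and an arbitrary $\mu\in \cO\cap\mathcal M_{inv}(\Lambda)$ (if the intersection is empty then $\inf_{\cO}\mathfrak I_\psi=+\infty$ and there is nothing to prove). It suffices to show
\[
\liminf_{t\to +\infty} \tfrac1t \log \mu_\psi \big(\{x\in \Lambda \colon \cE_t(x)\in \cO\} \big)
\ge -\mathfrak I_\psi(\mu)
= -P_{\rm top}(\Lambda,\psi)+h_\mu(X)+\int\psi\,d\mu,
\]
and then take the supremum over $\mu\in\cO$ (equivalently the infimum of $\mathfrak I_\psi$).

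Next I would invoke the horseshoe approximation property of $\mathcal M_{inv}(\Lambda)$, which holds under the standing hypotheses by Proposition~\ref{Pro:Horseshoe-approximation-inv}. This produces, for each $\varepsilon>0$, an ergodic measure $\nu_\varepsilon\in\mathcal M_{erg}(\Lambda)$ (supported on a horseshoe inside $\Lambda$) such that $d^*(\mu,\nu_\varepsilon)<\varepsilon$ and $h_{\nu_\varepsilon}(X)>h_\mu(X)-\varepsilon$. Using continuity of $\psi$, a standard weak$^*$-approximation gives $|\int\psi\,d\mu-\int\psi\,d\nu_\varepsilon|<\eta(\varepsilon)$ with $\eta(\varepsilon)\to 0$. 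Since $\cO$ is open and $\nu_\varepsilon\to\mu\in\cO$ in the weak$^*$-topology, we also have $\nu_\varepsilon\in\cO$ whenever $\varepsilon$ is small. Now Proposition~\ref{prop:lower-wGibbs}, applied to the ergodic measure $\nu_\varepsilon\in\cO$ with $\nu_\varepsilon(\Lambda_H)=\nu_\varepsilon(\Lambda)=1$, yields
\[
\liminf_{t\to +\infty} \tfrac1t \log \mu_\psi \big(\{x\in \Lambda \colon \cE_t(x)\in \cO\} \big)
\ge -P_{\rm top}(\Lambda,\psi)+h_{\nu_\varepsilon}(X)+\int\psi\,d\nu_\varepsilon
\ge -\mathfrak I_\psi(\mu)-\varepsilon-\eta(\varepsilon).
\]
Letting $\varepsilon\to 0$ and then taking the infimum of $\mathfrak I_\psi$ over $\mu\in\cO$ completes the argument.

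I do not expect a genuine obstacle here: the heavy lifting was done in Proposition~\ref{prop:lower-wGibbs} (which already deals with the weak-Gibbs tails through the inclusion ~\eqref{eq:mod}), and in the horseshoe approximation machinery of Section~\ref{Section:ED-HS-approximation}. The only delicate point is the entropy-density step, where one must simultaneously control the weak$^*$ distance, the entropy, and the integral $\int\psi\,d\nu$; but all three are handled uniformly since $\psi$ is continuous (indeed H\"older) and entropy density is built into the horseshoe approximation property. The Gibbs (as opposed to merely weak Gibbs) hypothesis is used only to ensure $\nu_\varepsilon(\Lambda_H)=1$ for the approximating ergodic measures, which is what prevented the same deduction in item~\ref{item:LD-lower} of Theorem~\ref{Thm:LD2B} without an a priori restriction to measures charging $\Lambda_H$.
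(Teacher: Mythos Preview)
Your proposal is correct and follows essentially the same route as the paper: reduce to the ergodic case via the horseshoe approximation property (Proposition~\ref{Pro:Horseshoe-approximation-inv}), apply Proposition~\ref{prop:lower-wGibbs} to the approximating ergodic measure $\nu_\varepsilon\in\cO$ (using $\Lambda_H=\Lambda$ from the Gibbs assumption), and let $\varepsilon\to 0$. The only cosmetic difference is that the paper controls $\big|\int\psi\,d\mu-\int\psi\,d\nu_\varepsilon\big|$ by taking $d^*(\mu,\nu_\varepsilon)<\varepsilon/\|\psi\|$, whereas you phrase this as a generic $\eta(\varepsilon)\to 0$ coming from weak$^*$ convergence; both are fine.
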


\begin{proof}
Take an open subset $\cO \subset \mathcal M(\Lambda)$, it is sufficient to show that, for each $\mu \in \cO$ one has
\begin{equation}\label{eq:aux1234}
\liminf_{t\to +\infty} \frac1t \log \mu_\psi \Big( \Big\{x\in \Lambda \colon \cE_t(x)\in \cO  \Big\} \Big)
	\ge  -\mathfrak I_{\psi}(\mu).
\end{equation}
Note that since $\mu_\psi$ is a Gibbs measure, the set $\Lambda_H$ can be chosen as $\Lambda$, thus one has $\nu(\Lambda_H)=\nu(\Lambda)=1$ for any $\nu\in\mathcal{M}(\Lambda)$.

If $\mu\notin \mathcal M_{inv}(\Lambda)$, then $\mathfrak I_{\psi}(\mu)=+\infty$ and there is nothing to prove. Hence one just needs to consider where 
 $\mu\in \mathcal M_{inv}(\Lambda)$. Since by assumption $\overline{\mathcal M_{1}(\Lambda)}=\mathcal M_{inv}(\Lambda)$, Proposition~\ref{Pro:Horseshoe-approximation-inv} guarantees that ${\mathcal M_{inv}(\Lambda)}$ admits the  horseshoe approximation property. Thus, for any $\varepsilon>0$ there exists 
$\nu_\varepsilon \in \mathcal{M}_{erg}(\Lambda)$ so that $\Lambda'=\supp(\nu_\varepsilon)$ is a horseshoe, and the followings are satisfied:
$$
	d^*(\nu_\varepsilon,\mu)<\frac{\varepsilon}{L} ~~~\text{and}~~~h_{\nu_\varepsilon}(X)>h_{\mu}(X)-\varepsilon,
$$
where $L=\|\psi\|$ is the supremum norm of $\psi$.
In particular one has $\displaystyle \left|\int \psi\, d\mu - \int \psi\, d\nu_\varepsilon\right|<\varepsilon$.
The following estimation holds
\begin{align*}
-\mathfrak I_\psi(\nu_\varepsilon) 
	&=  h_{\nu_\varepsilon}(X) + \int {\psi} \, {\rm d}\nu_\varepsilon  - P_{\rm top}(X,\psi) \\
	&>  h_\mu(X) + \int {\psi} \, {\rm d}\mu  - P_{\rm top}(X,\psi) -2\varepsilon \\
	&= -\mathfrak I_\psi(\mu) -2\varepsilon.
\end{align*}

Shrink $\varepsilon$ so that $\nu_\varepsilon\in \mathcal O$.
Note that since $\nu_\eps$ is ergodic and $\Lambda_H=\Lambda$, one applies Proposition~\ref{prop:lower-wGibbs} to $\cO$ and $\nu_\eps$ and obtains the following
\begin{equation*}
	\liminf_{t\to +\infty} \frac1t \log \mu_\psi \Big( \Big\{x\in \Lambda : \cE_t(x)\in \cO \Big\} \Big)
	\ge  -\mathfrak I_{\psi}(\nu_\varepsilon)> -\mathfrak I_\psi(\mu) -2\varepsilon. 
\end{equation*}
Corollary~\ref{cor:lower} is concluded since $\eps$ can be taken arbitrarily small.
\end{proof}

Proposition~\ref{prop:lower-wGibbs} together with Corollary~\ref{cor:lower} prove item~\ref{item:lower} \&~\ref{item:lower-Gibbs} of Theorem~\ref{Thm:LD2B}. 


\subsection{Upper bound}

The large deviations upper bounds for the flow 
are inspired by \cite[Theorem~A]{Va12}  and \cite[Theorem 3.2]{PS05}. A first fundamental step is the following instrumental result, which explores ideas from Misiurewicz's proof of the variational principle.


\begin{lemma}\label{le:auxPS2}
Let $\Lambda$ be an invariant compact set of a vector field $X\in\mathcal{X}^1(M)$ and
let  $D\subset \mathcal M(\Lambda)$ be a non-empty set.
If $s_D(t,\eps)$ denotes the maximal cardinality of $(t,\eps)$-separated sets
in 
$
\Big\{ x\in M \colon \cE_t(x) \in  D \Big\}
$
then, 
$$
\limsup_{t\to\infty} \frac1t \log s_D(t,\eps) \le \sup_{\eta \in \overline{D}^{co} \cap\; \mathcal M_{inv}(\Lambda)} \, h_\eta(X), \quad \text{for every $\eps>0$},
$$
where $\overline{D}^{co}$ denotes the closed convex hull of $D$.\\
If, in addition, the entropy function $\mathcal M_{inv}(\Lambda)\ni \mu \mapsto h_\mu(X)$
is upper semicontinuous, then 
$$
\limsup_{t\to\infty} \frac1t \log s_D(t,\eps)  \le \sup_{\eta \in \overline{D} \cap\; \mathcal M_{inv}(\Lambda)} \, h_\eta(X), \quad \text{for every $\eps>0$},
$$
where $\overline{D}$ denotes the closure of $D$.
\end{lemma}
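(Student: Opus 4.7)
My plan is to adapt the classical Misiurewicz proof of the variational principle to the constrained setting where the separated points are required to have their empirical measures lying in $D$. First I would fix $\eps>0$ and, for each $t>0$, choose a maximal $(t,\eps)$-separated set $E_t\subset\{x\in M \colon \mathcal E_t(x)\in D\}$ of cardinality $s_D(t,\eps)$. Form the uniform atomic probability $\sigma_t=\frac{1}{s_D(t,\eps)}\sum_{x\in E_t}\delta_x$ together with its flow-average
$$
\mu_t \;=\; \frac1t\int_0^t (\phi_s)_*\sigma_t\,ds \;=\; \frac{1}{s_D(t,\eps)}\sum_{x\in E_t}\mathcal E_t(x).
$$
The second equality is the crucial one: it exhibits $\mu_t$ as a convex combination of elements of $D$, hence $\mu_t\in\overline{D}^{co}$. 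Extracting a subsequence $t_k\to\infty$ realizing the $\limsup$ and along which $\mu_{t_k}\to\mu$ weakly, the elementary estimate $\|(\phi_r)_*\mu_t-\mu_t\|\le 2r/t$ forces $\mu\in\mathcal M_{inv}(\Lambda)$, while closedness of $\overline{D}^{co}$ yields $\mu\in\overline{D}^{co}\cap\mathcal M_{inv}(\Lambda)$.

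Next I would run the standard Misiurewicz argument on the time-one map $\phi_1$: pick a finite Borel partition $\xi$ of $M$ of diameter below $\eps/2$ with $\mu$-null boundary, and set $n=\lfloor t\rfloor$. Since the refinement $\xi_n:=\bigvee_{j=0}^{n-1}\phi_1^{-j}\xi$ contains at most one point of $E_t$ per atom (modulo an additive constant comparing $(t,\eps)$- and $(n,\eps/2)$-separation), one has $\log s_D(t,\eps)\le H_{\sigma_t}(\xi_n)+O(1)$. The classical concavity-and-cyclic-averaging step, division by $t$, passage to $k\to\infty$, and finally letting the block length tend to infinity produce
$$
\limsup_{t\to\infty}\frac1t\log s_D(t,\eps)\;\le\; h_\mu(\phi_1,\xi)\;\le\; h_\mu(\phi_1)\;=\; h_\mu(X),
$$
which together with $\mu\in\overline{D}^{co}\cap\mathcal M_{inv}(\Lambda)$ establishes the first inequality.

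To sharpen the bound to $\overline{D}$ under upper semicontinuity of $h$, I would retain more information than just the barycenter by also tracking the second-level probabilities
$$
\rho_t \;=\; \frac{1}{s_D(t,\eps)}\sum_{x\in E_t}\delta_{\mathcal E_t(x)}\;\in\;\mathcal M(\mathcal M(\Lambda)),
$$
which are supported on $D$ and have barycenter $\mu_t$. Along a subsequence $\rho_{t_k}\to\rho$ in $\mathcal M(\mathcal M(\Lambda))$, with $\mathrm{supp}(\rho)\subset\overline{D}$. Since each $\mathcal E_t(x)$ is $(2r/t)$-almost $\phi_r$-invariant in total variation for every $r>0$, testing against continuous observables and passing to the limit shows that $\rho$-a.e. $\eta$ is $\phi_r$-invariant for all real $r$, hence $\mathrm{supp}(\rho)\subset\overline{D}\cap\mathcal M_{inv}(\Lambda)$, and the barycenter of $\rho$ equals $\mu$. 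Combining the affinity of $h$ on $\mathcal M_{inv}(\Lambda)$ with its upper semicontinuity (by approximating $h$ from above by a decreasing sequence of continuous affine functions and applying monotone convergence) yields the barycentric inequality
$$
h_\mu \;\le\; \int h_\eta\,d\rho(\eta)\;\le\;\sup_{\eta\in\overline{D}\cap\mathcal M_{inv}(\Lambda)}h_\eta,
$$
which is the refined bound.

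The main obstacle is exactly this last step: the Misiurewicz scheme in the first two paragraphs is entirely classical, but upgrading pointwise affinity of $h$ on $\mathcal M_{inv}(\Lambda)$ to the integral inequality $h_{\int\eta\,d\rho}\le\int h_\eta\,d\rho$ for a general barycentric disintegration $\rho$ is where the upper semicontinuity of the entropy map is genuinely used, and any careful version of the proof must justify this approximation before concluding.
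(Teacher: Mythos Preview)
Your treatment of the first inequality matches the paper's: form $\sigma_t$ and $\mu_t$, note that $\mu_t$ is a convex combination of points of $D$, extract a limit $\mu\in\overline D^{co}\cap\mathcal M_{inv}(\Lambda)$, and run Misiurewicz via the time-$1$ map. (The paper converts $(t,\eps)$-flow-separation to $(\lfloor t\rfloor,\gamma)$-separation for $\phi_1$ by a Gronwall estimate, with $\gamma=C^{-1}e^{-\|X\|_\infty}\eps$; this is what your parenthetical ``modulo an additive constant comparing $(t,\eps)$- and $(n,\eps/2)$-separation'' should really be.)

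For the second inequality the paper takes a shorter and more elementary route than your second-level measures $\rho_t\in\mathcal M(\mathcal M(\Lambda))$. It covers $\overline D$ by finitely many $\delta$-balls, observes that one ball $B(\eta_{i_\delta},\delta)$ already carries the full growth rate $\limsup_t\frac1t\log s_D(t,\eps)$, applies part one to $D\cap B(\eta_{i_\delta},\delta)$ to produce $\mu_\delta\in\overline{B(\eta_{i_\delta},\delta)}\cap\mathcal M_{inv}(\Lambda)$ with $h_{\mu_\delta}$ bounding the limsup, and then sends $\delta\to0$: any accumulation point $\mu$ of $(\mu_\delta)$ lies in $\overline D\cap\mathcal M_{inv}(\Lambda)$, and upper semicontinuity gives $h_\mu\ge\limsup_\delta h_{\mu_\delta}$. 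Your route is also valid---the identification $\mathrm{supp}(\rho)\subset\overline D\cap\mathcal M_{inv}(\Lambda)$ goes through as you describe---but the barycentric step $h_\mu\le\int h_\eta\,d\rho$ that you rightly flag as the crux genuinely requires the Choquet-simplex structure of $\mathcal M_{inv}(\Lambda)$: on a general compact convex set a u.s.c.\ affine function need not be a decreasing limit of continuous affine ones, so your stated justification is incomplete as written (one can instead invoke uniqueness of ergodic decomposition together with Jacobs' theorem). The paper's covering argument sidesteps this Choquet-theoretic detour entirely and uses upper semicontinuity only in the elementary form $h_{\lim\mu_\delta}\ge\limsup h_{\mu_\delta}$.
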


\begin{proof}
The proof is inspired by \cite[Lemma 3.1]{PS05}, in the discrete time context. For completeness, we shall 
provide a sketch of the proof. 
Let $D\subset \mathcal M(\Lambda)$ be a non-empty set and let $\eps>0$. For each $t>0$ let $E_{t}\subset \{x\in \Lambda\colon \cE_t(x) \in D\}$ be a $(t,\eps)$-separated set (with respect to the flow $\Phi=(\phi_t)_t$) with cardinality $s_D(t,\eps)$. Choose a sequence $(t_n)_{n\ge 1}$ so that 
\begin{equation}\label{eq:disccont}
\limsup_{t\to\infty} \frac1t \log s_D(t,\eps) = \limsup_{n\to\infty} \frac1{t_n}\log s_D(t_n,\eps),
\end{equation}
and consider the probability measures
$$
\sigma_n:= \frac1{s_D(t_n,\eps)} \sum_{x\in E_{t_n}} \delta_x
\quad\text{and}\quad
\mu_n:= \frac1{s_D(t_n,\eps)} \sum_{x\in E_{t_n}} \cE_{t_n}(x).
$$
Up to consider some convergent subsequence, we may assume without loss of generality that $(\mu_n)_{n\ge1}$ is weak$^*$ convergent to $\mu$.  It is clear that 
$\mu\in \mathcal M_{inv}(\Lambda)$. Moreover, as the sequence $(\mu_n)_{n\ge 1}$ is a convex combination of probability measures in $D$ then 
$\mu\in \overline{D}^{co}$. Therefore, using \eqref{eq:disccont}, in order to prove the first statement in the lemma it is enough to show that 
\begin{equation}\label{eq:suffc}
\limsup_{n\to\infty} \frac1{t_n}\log s_D(t_n,\eps) \le h_\mu(X).
\end{equation}

By Gronwall's inequality, there exists $C>0$ 
so that 
$$
C^{-1} e^{-s\, \|X\|_\infty} \, d(x,y) \le d(\phi_s(x),\phi_s(y)) \le C e^{s\, \|X\|_\infty} \, d(x,y)
$$
for every $x,y\in \Lambda$ and $s\in[0,1]$. 
Since $E_{t_n}$ is  a $(t_n,\eps)$-separated set with respect to the flow $\Phi=(\phi_t)_t$,  for any $x,y\in E_{t_n}$, there exists $t\in[0,t_n]$ such that $d(\phi_t(x),\phi_t(y))>\varepsilon$. 
Thus by the fact that $t-\lfloor t\rfloor \in[0,1]$, one has 
$$d(\phi_{\lfloor t \rfloor}(x),\phi_{\lfloor t \rfloor}(y))\geq C^{-1}e^{-\|X\|_{\infty}} d(\phi_t(x),\phi_t(y))>\gamma$$   
where $\gamma=C^{-1}e^{-\|X\|_{\infty}}\varepsilon$.
Note that $\lfloor t \rfloor \in[0,\lfloor t_n \rfloor]$, so $E_{t_n}$ is a a $(\lfloor t_n \rfloor,\gamma)$-separated set with respect to the time-one map $\phi_1$.
Choosing a partition $\mathcal P$ of $\Lambda$ with $\diam(\cP)<\gamma$ and $\mu(\partial \cP)=0$, 
one concludes that each element of the partition $\bigvee_{j=0}^{\lfloor t_n\rfloor -1} \phi_{-j}(\cP)$ contains at most one  element of $E_{t_n}$,
and so
$$
H_{\sigma_n}\Big(\bigvee_{j=0}^{\lfloor t_n\rfloor -1} \phi_{-j}(\cP)\Big) = \log \# E_{t_n} = \log {s_D(t_n,\eps)}.
$$
It is not hard to check that the probability measures
$
\hat \mu_n:= \frac1{s_D(t_n,\eps)} \sum_{x\in E_{t_n}} \cE_{\lfloor t_n\rfloor}(x)
$
converge to $\mu$ as well. Moreover,
the argument used in the proof of the variational principle (see e.g. \cite[Lemma 3.1]{PS05} or \cite[Theorem~8.6]{Wa}) ensures that 
$$
\limsup_{n\to\infty} \frac 1{t_n} \log  {s_D(t_n,\eps)} 
	= \limsup_{n\to\infty} \frac 1{\lfloor t_n\rfloor} \log  {s_D(t_n,\eps)} \le h_{\mu}(\phi_1,\cP) \le h_{\mu}(\phi_1)=h_{\mu}(X).
$$
This proves ~\eqref{eq:suffc}, and the first statement in the lemma.

Now, assume that $D\subset \mathcal M(\Lambda)$ is a non-empty set and that $\mathcal M_{inv}(\Lambda)\ni \mu \mapsto h_\mu(X)$
is upper semicontinuous. As $\overline D$ is compact, for each $\delta>0$ there exists a finite open cover $\{B(\eta_i,\delta)\}$ of $D$ by balls of radius $\delta$. 
In particular there exists $\eta_{i_\delta}\in \mathcal M(\Lambda)$ so that
$$
\limsup_{t\to\infty} \frac1t \log s_D(t,\eps)
= \limsup_{t\to\infty} \frac1t \log s_{D\cap B(\eta_{i_\delta},\delta)}(t,\eps).
$$
Using the first statement of the lemma, there exists $\mu_\delta \in \mathcal M_{inv}(\Lambda)\cap \overline{B(\eta_{i_\delta},\delta)}$ so that 
$$
\limsup_{t\to\infty} \frac1t \log s_D(t,\eps) \le h_{\mu_\delta}(X).
$$
In particular, any weak$^*$ accumulation point $\mu$ of $(\mu_\delta)_{\delta>0}$ belongs to $\overline D$ and, by upper semicontinuity of the entropy, 
$
\limsup_{t\to\infty} \frac1t \log s_D(t,\eps) \le h_{\mu}(X).
$
This completes the proof of the lemma.
\end{proof}

The previous result will allow to obtain the desired large deviations upper bounds. We observe that $h_\mu(X)=h_\mu(f)$ where $f=\phi_1$ denotes the time-1
of the flow $\Phi=(\phi_t)_t$. Moreover, notice that the entropy function associated to singular-hyperbolic attractors is upper-semicontinuous (cf. \cite{PYY}).
Thus, the large deviations upper bound in item~\ref{item:upper} of Theorem~\ref{Thm:LD2B} is now a direct consequence of the following proposition.

\begin{proposition}\label{prop:upper-wGibbs}
Let $\Lambda$ be an invariant compact set of a vector field $X\in\mathcal{X}^1(M)$ and
let $\cK\subset \mathcal M(\Lambda)$ be a closed and convex subset so that $\mathcal K\cap \mathcal M_{inv}(\Lambda)\neq\emptyset$. Assume $\mu_{\psi}\in \mathcal{M}_{inv}(\Lambda)$ is a 
weak Gibbs measure with respect to a H\"older continuous potential $\psi \colon\Lambda\rightarrow \mathbb R$ and $\Lambda_H\subset\Lambda$ be the $\mu_\psi$-full measure set satisfied for~\eqref{Def:Gibbs-weak}. 
Consider the non-positive real number
\begin{equation}\label{cinfty}
c_\infty:=\limsup_{\delta\to 0} \; \limsup_{t\to\infty} \frac1t\log \mu_\psi \Big(\big\{x\in \Lambda \colon C_t(x,\eps) > e^{\delta t}\big\}\Big).
\end{equation}
Then 
\begin{align}
\limsup_{t\to\infty} & \frac1t \log \mu_\psi \big(\{x\in \Lambda \colon \cE_t(x) \in \mathcal K\} \big)  \nonumber \\
& \le 
\max\Big\{ 
\sup_{\mu\in \mathcal K \cap \mathcal M_{inv}(\Lambda)} \Big( \displaystyle -P_{\rm top}(\Lambda,\psi) + h_\nu(X) + \int {\psi} \, {\rm d}\nu \Big) \; ,\;  
c_\infty
\Big\}. \label{PS-conclusion}
\end{align}
Furthermore, if the entropy function 
$\mathcal M_{inv}(\Lambda)\ni \mu\mapsto h_\mu(X)$ is upper-semicontinuous then ~\eqref{PS-conclusion} holds even if $\mathcal K$ is not convex.
\end{proposition}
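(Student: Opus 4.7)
The plan is to split the set $\{x\in\Lambda : \mathcal E_t(x)\in\mathcal K\}$ according to whether the weak Gibbs constant $C_t(x,\varepsilon)$ is subexponential, and then control the two resulting pieces separately. Precisely, for each $\delta>0$ I would consider the closed set
\[
G_t(\delta)=\{x\in\Lambda : C_t(x,\varepsilon)\le e^{\delta t}\},
\]
which is closed by the standing lower semicontinuity assumption on $C_t$, and write
\[
\mu_\psi(\{\mathcal E_t\in\mathcal K\}) \le \mu_\psi(\{\mathcal E_t\in\mathcal K\}\cap G_t(\delta)) + \mu_\psi(\Lambda\setminus G_t(\delta)).
\]
The second summand is handled immediately by the definition of $c_\infty$ in \eqref{cinfty}; its contribution to the exponential growth rate is at most $c_\infty$ after sending $\delta\to 0$.

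For the first summand, I would pick a maximal $(t,\varepsilon)$-separated set $E_t\subset\{\mathcal E_t\in\mathcal K\}\cap G_t(\delta)$, so that the Bowen balls $\{B(x,t,\varepsilon,\Phi)\}_{x\in E_t}$ cover this set. For each $x\in E_t$, the upper weak Gibbs bound \eqref{Eq:Gibbs-weak}, combined with $C_t(x,\varepsilon)\le e^{\delta t}$ and the identity $\frac{1}{t}\int_0^t \psi(\phi_s x)\,ds=\int\psi\,d\mathcal E_t(x)$, yields
\[
\mu_\psi\bigl(B(x,t,\varepsilon,\Phi)\bigr) \le e^{\delta t}\,e^{-tP_{\rm top}(\Lambda,\psi)+t\int\psi\,d\mathcal E_t(x)}.
\]

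To decouple the entropy term (which will control $|E_t|$) from the potential term (which appears in the exponent), I would partition $\mathcal K$ into small pieces. Given $\gamma>0$, cover $\mathcal K$ by finitely many closed sets $\mathcal K_1,\dots,\mathcal K_N$ of $d^*$-diameter less than $\gamma$, chosen so that $|\int\psi\,d\eta_1-\int\psi\,d\eta_2|<\gamma$ whenever $\eta_1,\eta_2$ lie in the same $\mathcal K_i$ (and thus, by linearity of integration, whenever they lie in the same $\overline{\mathcal K_i}^{co}$). Summing the above Gibbs bound over $x\in E_t^i:=E_t\cap\{\mathcal E_t\in\mathcal K_i\}$ and invoking Lemma~\ref{le:auxPS2} with $D=\mathcal K_i$ gives
\[
\limsup_{t\to\infty}\frac{1}{t}\log \mu_\psi(\{\mathcal E_t\in\mathcal K_i\}\cap G_t(\delta))
\le \sup_{\eta\in\overline{\mathcal K_i}^{co}\cap\mathcal M_{inv}(\Lambda)} h_\eta(X) +\sup_{\eta\in\mathcal K_i}\int\psi\,d\eta - P_{\rm top}(\Lambda,\psi) + \delta.
\]
Since $\mathcal K$ is closed and convex, $\overline{\mathcal K_i}^{co}\subset\mathcal K$, so any near-maximizer $\eta^*$ of the entropy supremum lies in $\mathcal K\cap\mathcal M_{inv}(\Lambda)$ and satisfies $|\int\psi\,d\eta^*-\sup_{\eta\in\mathcal K_i}\int\psi\,d\eta|<\gamma$; this glues the two suprema to a single measure $\eta^*\in\mathcal K\cap\mathcal M_{inv}(\Lambda)$, up to an error $O(\gamma)$. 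Taking the maximum over the finitely many pieces (absorbed by the logarithm), combining with the bad-part bound, and letting $\gamma\to 0$ and $\delta\to 0$ produces \eqref{PS-conclusion}. When $\mathcal K$ is merely closed but the entropy map is upper semicontinuous, the second statement of Lemma~\ref{le:auxPS2} is applied in place of the first, which removes the need to pass to convex hulls.

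The step I expect to be the most delicate is the decoupling of entropy and potential on each piece $\mathcal K_i$: one must verify that the convex hull used inside Lemma~\ref{le:auxPS2} does not escape $\mathcal K$ (this is where convexity of $\mathcal K$ enters), and that the entropy near-maximizer produced by the lemma can be used to simultaneously control $\int\psi\,d\eta$ up to $\gamma$. Everything else (the split by $\delta$, the separated-set covering argument, and the passage $\gamma,\delta\to 0$) is of the same nature as the discrete-time scheme of \cite{PS05,Va12} and should go through verbatim once the time-$1$ map rephrasing of entropy $h_\mu(X)=h_\mu(\phi_1)$ is in place.
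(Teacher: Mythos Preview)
Your proposal is correct and follows the same scheme as the paper's proof: split off the set where $C_t(x,\varepsilon)>e^{\delta t}$, partition $\mathcal K$ into finitely many pieces on which $\int\psi\,d\eta$ oscillates by at most a small amount, cover each piece by Bowen balls over a $(t,\varepsilon)$-separated set, combine the weak Gibbs upper bound with Lemma~\ref{le:auxPS2}, and let the parameters tend to zero. The only differences are cosmetic: the paper carries a single small parameter $\delta$ and partitions $\mathcal K$ directly by the level sets $\mathcal K_j=\{\eta\in\mathcal K:\int\psi\,d\eta\in[-\|\psi\|_\infty+j\delta,-\|\psi\|_\infty+(j+1)\delta]\}$, which are already closed and convex (so $\overline{\mathcal K_j}^{co}=\mathcal K_j$ and the gluing step is immediate), whereas you partition by $d^*$-diameter and carry an auxiliary parameter $\gamma$.
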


\begin{proof}
Assume first that $\cK\subset \mathcal M(\Lambda)$ is a closed and convex 
subset.  As $\psi \colon\Lambda\rightarrow\mathbb R$ is H\"older continuous, then it is bounded and, 
given $\delta>0$, one can write $\mathcal K=\bigcup_{j=0}^{N_\delta} \mathcal K_j$ where
$$
\mathcal K_j=\Big\{ \eta\in \mathcal K \colon \int \psi\, d\eta \in [ -\|\psi\|_\infty+ j \delta, -\|\psi\|_\infty+ (j+1)\delta  ]  \Big\},
$$ 
$N_\delta=\lfloor \frac2\delta \|\psi\|_\infty\rfloor+1$. 
Note that some of the sets $\mathcal K_j$, which are  closed and convex, may be empty.
For each non-empty $\mathcal K_j$, either there exists $t_*>0$  so that $\mu_\psi(\{x\in \Lambda \colon \cE_t(x) \in \mathcal K_j \})=0$ for every $t>t_*$
or $\mathcal K_j \cap \mathcal M_{inv}(\Lambda) \neq\emptyset$.
For that reason we will assume, without loss of generality, that all $\mathcal K_j\neq\emptyset$ intersect the space of invariant probability measures. 
Now, for each $0\le j\le N_\delta$ so that $\mathcal K_j\neq\emptyset$, one has 
\begin{equation}\label{eq:decompKj}
\big\{x\in \Lambda \colon \cE_t(x) \in \mathcal K_j\big\} \subset 
	 \big\{x\in \Lambda \colon \cE_t(x) \in \mathcal K_j \, \& \, C_t(x,\eps) \le e^{\delta t} \big\} 
	 \cup 
	  \big\{x\in \Lambda \colon  C_t(x,\eps) > e^{\delta t} \big\}. 
\end{equation}
The maximal cardinality of a $(t,\eps)$-separated set in the first set in the right hand-side above
is bounded above by $s_{\mathcal K_j}(t,\eps)$ which, by Lemma~\ref{le:auxPS2},  satisfies
$$
\limsup_{t\to\infty} \frac1n \log s_{\mathcal K_j}(t,\eps)
	\le \sup_{\eta \in {\mathcal{K}_j} \cap\; \mathcal M_{inv}(X)} \, h_\eta(X), \quad \text{for every $\eps>0$}.
$$
Given $\eps>0$ small and fixed, pick a $(t,\eps)$-maximal separated set $E_{j,t}\subset \big\{x\in \Lambda \colon \cE_t(x) \in \mathcal K_j\big\}$. If $\mathcal K_j\neq\emptyset$ then, using the weak Gibbs property, one ensures that
\begin{align*}
\mu_\psi\Big(\big\{x\in \Lambda \colon \cE_t(x) \in \mathcal K_j \, \& \, C_t(x,\eps) \le e^{\delta t} \big\}\Big)
	& \le \sum_{x\in E_{j,t}}   \mu_\psi\Big(B\big(x,t,\eps,\Phi\big)\Big) \\
	& \le \sum_{x\in E_{j,t}}  e^{\delta t} \; e^{- t\,P_{\rm top}(X,\psi) + t  \int \psi \, d\mathcal E_t(x)} \\
	& \le s_{\mathcal K_j}(t,\eps)\; e^{\delta t} \; e^{- t\,P_{\rm top}(X,\psi) + t \sup_{\eta \in \mathcal K_j} \int \psi\, d\eta}\\
	& \le s_{\mathcal K_j}(t,\eps)\; e^{2\delta t} \; e^{- t\,P_{\rm top}(X,\psi) + t \sup_{\eta \in \mathcal K_j \cap \mathcal M_{inv}(X)} \int \psi\, d\eta}
\end{align*}
and, consequently,
\begin{align*}
\limsup_{t\to\infty} 
	 \frac1t \log \mu_\psi & \Big(\big\{x\in \Lambda \colon \cE_t(x) \in \mathcal K_j \, \& \, C_t(x,\eps) \le e^{\delta t} \big\}\Big) \\
				& \le \sup_{\eta \in {\mathcal{K}_j} \cap \mathcal M_{inv}(\Lambda)} \, \Big\{ -P_{\rm top}(X,\psi)+ h_\eta(X) +\int \psi \, d\eta\Big\} + 2\delta
\end{align*}
This, combined with ~\eqref{eq:decompKj}, ensures that 
$
\limsup_{t\to\infty} \frac1t \log \mu_\psi \big(\{x\in \Lambda \colon \cE_t(x) \in \mathcal K_j\} \big)
$
is bounded above by 
\begin{align*}
\max\Big\{ 
&\sup_{\eta \in \mathcal{K} \cap \mathcal M_{inv}(\Lambda)} \, \Big\{ -P_{\rm top}(X,\psi)+ h_\eta(X) +\int \psi \, d\eta\Big\} + 2\delta \; , \\
	&\limsup_{t\to\infty} \frac1t\log \mu_\psi \Big(\big\{x\in \Lambda \colon C_t(x,\eps) > e^{\delta t}\big\}\Big) 
 \Big\}
\end{align*}
for each small $\delta>0$. Taking the $\limsup$ as $\delta \to0$ in each of the terms in the previous expression we conclude that
\begin{align*}
\limsup_{t\to\infty} & \frac1t \log \mu_\psi \big(\{x\in \Lambda \colon \cE_t(x) \in \mathcal K\} \big)  \\
& \le 
\max\Big\{ 
\sup_{\mu\in \mathcal K \cap \mathcal M_{inv}(\Lambda)} \Big( \displaystyle -P_{\rm top}(\Lambda,\psi) + h_\nu(X) + \int {\psi} \, {\rm d}\nu \Big), 
c_\infty
\Big\},
\end{align*}
thus proving the first statement in the proposition. Finally, if $\mathcal M_{inv}(M)\ni \mu \mapsto h_\mu(\phi_1)$
is upper semicontinuous then the large deviations upper bound holds for arbitrary closed sets $\mathcal{K}$ as a consequence of the previous argument and the corresponding statement in Lemma~\ref{le:auxPS2} for such class of sets. This finishes the proof of the proposition.
\end{proof}

\subsection{A local level-1 large deviations principle}

Finally we note that Theorem~\ref{Thm:LD2B} implies  large deviations principle for singular hyperbolic sets and averages of continuous observables. 
Indeed,  Theorem~\ref{Thm:LD2B} together with Corollary~\ref{cor:lower} and the contraction principle (see \cite{CR-L11}) implies on the following:

\begin{corollary}\label{Cor:LD1B}(Level-1 large deviations) 
Let $X\in\mathscr{X}^1(M)$ and $\Lambda$ be a singular hyperbolic homoclinic class such that each pair of periodic orbits in $\Lambda$ are homoclinically related and $\overline{\mathcal M_{1}(\Lambda)}=\mathcal M_{inv}(\Lambda)$.   
Assume $\mu_\psi$ is a Gibbs measure with respect
to a H\"older continuous potential $\psi\colon \Lambda\to\mathbb R$. 
For any continuous observable $g\colon \Lambda\to\mathbb R$
it holds that
\begin{align*}
\limsup_{t\to +\infty} \frac1t \log \mu_\psi \Big( \Big\{x\in \Lambda \colon \frac{1}{t}\int_{0}^{t} g(\phi_s(x)) \, {\rm d}s \in [a , b] \Big\}\Big)
    \le -\inf_{s \in [a , b]} I_{\psi,g}(s) 
\end{align*}
and
\begin{align*}
\liminf_{t\to +\infty} \frac1t \log \mu_\psi \Big( \Big\{x\in \Lambda \colon \frac{1}{t}\int_{0}^{t} g(\phi_s(x)) \, {\rm d}s \in (a , b)\Big\} \Big)
    \ge -\inf_{s \in (a , b)} I_{\psi,g}(s)
\end{align*}
where the lower-semicontinuous rate function $I_{\psi,g}$ is given by
$$
I_{\psi,g}(s)=\sup\Big\{ P_{\rm top}(\Lambda,\psi) - h_\eta(X) - \int {\psi} \, {\rm d}\eta \colon  \eta \in \mathcal M_{inv}(\Lambda), \; 
\int g\, {\rm d}\eta = s \Big\}.
$$ 
Moreover, if there exist $\mu_1,\mu_2\in \mathcal M_{inv}(\Lambda)$ so that $\displaystyle\int g {\rm d}\mu_1 \neq \int g {\rm d}\mu_2$ 
and $\displaystyle\int g \, {\rm d}\mu_{\psi}\notin [a , b]$ then the infima in the right-hand side of the previous inequalities are strictly negative.
\end{corollary}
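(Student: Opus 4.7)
The plan is to derive Corollary~\ref{Cor:LD1B} from Theorem~\ref{Thm:LD2B} by the contraction principle applied to the continuous pushforward map $\Psi_g \colon \mathcal M(\Lambda) \to \mathbb R$ defined by $\Psi_g(\mu) = \int g\,{\rm d}\mu$. Note that $\cE_t(x) \in \Psi_g^{-1}(J)$ if and only if $\frac1t\int_0^t g(\phi_s(x))\,{\rm d}s \in J$, that $\Psi_g^{-1}([a,b])$ is weak$^*$-closed and $\Psi_g^{-1}((a,b))$ is weak$^*$-open, and that the very definition of $I_{\psi,g}$ gives
\[
\inf_{\mu \in \Psi_g^{-1}(J)} \mathfrak I_\psi(\mu) \;=\; \inf_{s \in J} I_{\psi,g}(s)
\qquad \text{for every } J \subset \mathbb R.
\]

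For the upper bound I would apply item~(\ref{item:upper}) of Theorem~\ref{Thm:LD2B} to $\cK = \Psi_g^{-1}([a,b])$. Since $\mu_\psi$ is a genuine (strong) Gibbs measure the constants $C_t(x,\eps)$ may be chosen independently of both $x$ and $t$, so the sets $\{x\colon C_t(x,\eps) > e^{\delta t}\}$ are empty for all large $t$; consequently the tail constant $c_\infty$ defined in~\eqref{cinfty} equals $-\infty$, and the upper bound of Theorem~\ref{Thm:LD2B} reduces to $-\inf_{\mu\in\cK} \mathfrak I_\psi(\mu) = -\inf_{s\in[a,b]} I_{\psi,g}(s)$. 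For the lower bound, I would apply Corollary~\ref{cor:lower} (i.e.\ item~(\ref{item:lower-Gibbs}) of Theorem~\ref{Thm:LD2B}, which is at our disposal because $\mu_\psi$ is a strong Gibbs measure) to $\cO = \Psi_g^{-1}((a,b))$, directly yielding $-\inf_{s\in(a,b)} I_{\psi,g}(s)$.

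For the strict negativity assertion it suffices to prove $\inf_{s\in[a,b]} I_{\psi,g}(s) > 0$, since $(a,b) \subset [a,b]$. Taking convex combinations of $\mu_1$ and $\mu_2$ produces invariant measures realizing every value of $\Psi_g$ on the interval $[\int g\,{\rm d}\mu_1, \int g\,{\rm d}\mu_2]$. If the full range $\Psi_g(\mathcal M_{inv}(\Lambda))$ is disjoint from $[a,b]$, then $I_{\psi,g} \equiv +\infty$ on $[a,b]$ and the claim is automatic; otherwise $\mathcal M_{inv}(\Lambda) \cap \Psi_g^{-1}([a,b])$ is a non-empty weak$^*$-compact set. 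Upper semicontinuity of the entropy map for singular-hyperbolic invariant sets (\cite{PYY}) makes $\mathfrak I_\psi$ lower semicontinuous, so its infimum on this compact set is attained at some $\eta_0$. The uniform Gibbs bounds combined with the Brin--Katok local entropy formula identify $\mu_\psi$ as the unique equilibrium state of $\psi$, hence as the unique zero of $\mathfrak I_\psi$; since the hypothesis $\int g\,{\rm d}\mu_\psi \notin [a,b]$ excludes $\mu_\psi$ from the compact set, one has $\mathfrak I_\psi(\eta_0) > 0$ and the conclusion follows.

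The main obstacle is the uniqueness-of-equilibrium-state step in the last paragraph, as such a uniqueness statement is not generally available in the singular-hyperbolic framework. The bypass I envisage exploits the two-sided Gibbs bounds to show, via the Brin--Katok formula, that any competing equilibrium state $\nu$ of $\psi$ must admit a bounded Radon--Nikodym density with respect to $\mu_\psi$ on the full-measure set $\Lambda_H = \Lambda$, and then invokes the horseshoe approximation property from Proposition~\ref{Pro:Horseshoe-approximation-inv} to transfer the classical Bowen uniqueness argument from each approximating horseshoe to $\Lambda$.
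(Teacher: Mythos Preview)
Your derivation of the upper and lower bounds by pushing the level-2 estimates of Theorem~\ref{Thm:LD2B} forward under $\Psi_g(\mu)=\int g\,{\rm d}\mu$ is exactly the paper's approach: the paper records only that the corollary follows from Theorem~\ref{Thm:LD2B}, Corollary~\ref{cor:lower} and the contraction principle, citing \cite{CR-L11}, with no further details. Your computation that $c_\infty=-\infty$ for a genuine Gibbs measure is correct (the sets $\{x:C_t(x,\eps)>e^{\delta t}\}$ are eventually empty), so the upper bound indeed collapses to $-\inf_{\mu\in\cK}\mathfrak I_\psi(\mu)$. One caveat worth flagging: the identity $\inf_{\mu\in\Psi_g^{-1}(J)}\mathfrak I_\psi(\mu)=\inf_{s\in J}I_{\psi,g}(s)$ you invoke holds only if $I_{\psi,g}(s)$ is read as the \emph{infimum} of $\mathfrak I_\psi(\eta)$ over $\{\eta\in\mathcal M_{inv}(\Lambda):\int g\,{\rm d}\eta=s\}$, which is the standard contraction-principle rate function; the ``$\sup$'' in the printed statement appears to be a typo, and with that reading your argument is correct.

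For the strict-negativity clause the paper offers no argument, so your attempt already goes beyond what is written there. Your reduction to uniqueness of the equilibrium state is the natural route, and the compactness plus lower-semicontinuity step (via \cite{PYY}) is sound. The bypass you sketch, however, is not yet a proof: the Brin--Katok formula shows that $\mu_\psi$ itself is an equilibrium state, but it does not by itself produce a bounded Radon--Nikodym derivative of a competing equilibrium state $\nu$ against $\mu_\psi$, and it is unclear how a Bowen-type uniqueness argument on approximating horseshoes would transfer back to $\Lambda$, since the Gibbs bounds are given for $\mu_\psi$ on $\Lambda$ rather than for any restricted or induced system on the horseshoes. If you wish to close this gap, a more direct line is to use the uniform two-sided Gibbs inequality on all of $\Lambda$ together with an expansiveness property of singular-hyperbolic attractors to compare $\nu$ and $\mu_\psi$ on a generating family of dynamic balls and deduce $\nu\ll\mu_\psi$, then conclude via ergodicity of $\mu_\psi$; but each of these ingredients (expansiveness in the singular setting, ergodicity of the Gibbs measure) requires its own justification.
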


\begin{remark*}
	Similarly as Theorem~\ref{Thm:irregular-Lorenz} \&~\ref{Thm:irregular-SinHyp-attractor} \&~\ref{Thm:LD2B-Lorenz-SH-attractor}, the conclusion of Corollary~\ref{Cor:LD1B} holds for Lorenz attractors of vector fields in a residual subset $\mathcal{R}^r\subset\mathcal{X}^r(M^3), ~(r\in\mathbb{N}_{\geq 2})$, and also holds for singular hyperbolic attractors of vector fields in a residual subset $\mathcal{R}\subset\mathcal{X}^1(M)$.
\end{remark*}

\subsection*{Acknowledgments}
 The authors would like to thank the anonymous referees for their valuable comments and suggestions. 
 The authors would also like to thank Professor Dejun Feng who pointed out to us the second item of Remark~\ref{Rem:Thm-A-B} and Remark~\ref{Rem:level-set}. 

\bibliographystyle{plain}

\flushleft{\bf Yi Shi} \\
School of Mathematics, Sichuan University, Chengdu, 610000, China\\
\textit{E-mail:} \texttt{shiyi@scu.edu.cn}\\

\flushleft{\bf Xueting Tian} \\
School of Mathematical Sciences, Fudan University, Shanghai, 200433,   China\\
\textit{E-mail:} \texttt{xuetingtian@fudan.edu.cn}\\

\flushleft{\bf Paulo Varandas} \\
Departamento de Matem\'atica, Universidade Federal da Bahia, Av. Ademar de Barros s/n, Salvador, 40170-110, Brazil\\
Centro de Matem\'atica da Universidade do Porto, Rua do Campo Alegre, Porto, Portugal
\textit{E-mail:} \texttt{paulo.varandas@ufba.br}\\

\flushleft{\bf Xiaodong Wang} \\
School of Mathematical Sciences,  CMA-Shanghai, Shanghai Jiao Tong University, Shanghai, 200240,  China\\
\textit{E-mail:} \texttt{xdwang1987@sjtu.edu.cn}\\

\end{document}